\def\hen{{\mathrm{\hen}}}
\def\len{\operatorname{length}}
\def\V{V^{>-1}}
\def\Res{\operatorname{Res}}
\def\Gr{\operatorname{Gr}}
\def\hQ{\widehat{Q}}
\def\hB{\widehat{B}}
\def\del{\partial}
\def\cube#1#2#3#4#5#6#7#8{
& #5 \ar[rr] \ar[dl] \ar@{-}[d] && #6 \ar[dd] \ar[dl] \\
#1 \ar[rr] \ar[dd]  & \ar[d] & #2 \ar[dd] \\
& #7 \ar@{-}[r] \ar[dl] & \ar[r] & #8 \ar[dl] \\
#3 \ar[rr] && #4 \\
}
\def\smsh{\wedge}
\def\etale{\'etale~}
\def\image{\operatorname{im}}
\def\im{\image}
\def\ker{\operatorname{ker}}
\def\cE{\mathcal E}
\def\cC{\mathcal C}
\def\cG{\mathcal G}
\def\cO{\mathcal O}
\def\coker{\operatorname{coker}}
\def\dm{\operatorname{dim}}
\def\Sing{\operatorname{Sing}}
\def\Proj{\operatorname{Proj}}
\def\Ext{\operatorname{Ext}}
\def\sExt{\sExt}
\def\Tor{\operatorname{Tor}}
\def\End{\operatorname{End}}
\def\Spec{\operatorname{Spec}}
\def\bu{\bullet}
\def\an{{\mathrm{an}}}
\def\into{\hookrightarrow}
\def\onto{\twoheadrightarrow}
\def\o#1{{\overline{#1}}}
\def\hOmega{\widehat{\Omega}}
\def\a{\alpha}
\def\b{\beta}
\def\g{\gamma}
\def\e{\epsilon}
\def\rk{\operatorname{rank}}
\DeclareMathOperator*{\colim}{colim}
\newcommand{\Q}{\mathbb{Q}}
\newcommand{\bP}{\mathbb{P}}
\newcommand{\A}{\mathbb{A}}
\newcommand{\G}{\mathbb{G}}
\newcommand{\R}{{\mathbf{R}}}
\newcommand{\C}{\mathbb{C}}
\newcommand{\Z}{\mathbb{Z}}
\newcommand{\fm}{{\mathfrak m}}
\numberwithin{equation}{section}
\theoremstyle{plain} 
\newtheorem{thm}[equation]{Theorem}
\newtheorem{conj}[equation]{Conjecture}
\newtheorem{thm-conj}[equation]{Theorem-Conjecture}
\newtheorem{defn-conj}[equation]{Definition-Conjecture}
\newtheorem*{introthm*}{Theorem}
\newtheorem{cor}[equation]{Corollary}
\newtheorem{lem}[equation]{Lemma}
\newtheorem{prop}[equation]{Proposition}
\theoremstyle{definition}
\newtheorem{defn}[equation]{Definition}
\newtheorem{ex}[equation]{Example}
\theoremstyle{remark}
\newtheorem{rem}[equation]{Remark}
\newtheorem{assumptions}[equation]{Assumptions}
\def\Perf{\operatorname{Perf}}
\newcommand{\Hom}{\operatorname{Hom}}
\newcommand{\xra}[1]{\xrightarrow{#1}}
\newcommand{\id}{\operatorname{id}}
\def\tr{\operatorname{tr}}
\def\and{ \text{ and } }
\def\can{\mathrm{can}}
\def\op{{\mathrm{op}}}
\def\G{\Gamma}
\def\o{\omega}
\def\on{\operatorname}
\def\ch{\on{ch}}
\def\fC{\EuScript{C}}
\def\fD{\EuScript{D}}
\def\hen{\on{hen}}
\def\th{\on{th}}
\def\p{{\frac{n+1}{2}}}
\def\idem{\on{idem}}
\begin{document}
\begin{abstract}
We prove the non-commutative analogue of Grothendieck's Standard Conjecture $D$ for the dg-category of matrix factorizations of an isolated hypersurface singularity in
characteristic $0$. Along the way, we show the Euler pairing for such dg-categories of matrix factorizations is positive semi-definite. 
\end{abstract}
\title{Standard conjecture $D$ for matrix factorizations}

\author{Michael K. Brown}
\address{Department of Mathematics, University of Wisconsin-Madison, WI 53706-1388, USA}
\email{mkbrown5@wisc.edu}

\author{Mark E. Walker}
\address{Department of Mathematics, University of Nebraska-Lincoln, NE 68588-0130, USA}
\email{mark.walker@unl.edu}

\thanks{MB and MW gratefully acknowledge support from the National Science Foundation (NSF award DMS-1502553) and
 the Simons Foundation (grant  \#318705), respectively.} 
\maketitle

\tableofcontents

\section{Introduction}

Let $k$ be a field. Grothendieck's Standard Conjecture $D$ predicts that numerical equivalence and homological equivalence coincide for cycles on 
a smooth, projective variety $X$ over $k$. 
Marcolli and Tabuada (\cite{MT}, \cite{tabuada2})
have formulated a non-commutative generalization of this conjecture, referred to as Conjecture $D_{nc}$, which  
predicts that numerical equivalence and homological equivalence coincide for a smooth and proper dg-category $\fC$ over a field $k$. 
In this paper, we prove
that Conjecture $D_{nc}$ (more precisely, its $\Z/2$-graded analogue) holds for the differential $\Z/2$-graded
category of matrix factorizations associated to an isolated hypersurface singularity over a field of characteristic $0$. 

Before stating our results precisely, 
we give some background.
\subsection{Grothendieck's Standard Conjecture $D$} 
Let $X$ be a smooth, projective $k$-variety. We write
$Z^j(X)$ for the group of codimension $j$ algebraic cycles on $X$; by definition, it is the
free abelian group on the set of integral subvarieties of $X$ having
codimension $j$. Let $H^*(-)$ be any Weil cohomology theory for smooth projective $k$-varieties; as a concrete example, the reader may take $k =\C$ and $H^*( - )$ to be singular
cohomology. There is an associated cycle class map
$$
cy: Z^j(X) \to H^{2j}(X),
$$
and two algebraic cycles on $X$ are {\em homologically equivalent} if their images in $H^*(X)$ 
under this map  coincide.  Let $\langle -,- \rangle$ denote the intersection pairing for cycles, determined by $\langle W, Z \rangle = \deg(W \cap Z)$ for integral
subvarieties $Z$, $W$ of $X$ meeting properly at a finite number of points. Two cycles $\a, \b$ are {\em numerically equivalent} if $\langle \a,- \rangle = \langle \b,- \rangle$. 
Since the intersection pairing is induced by a pairing on $H^*$ under the cycle class map, it is immediate that whenever two cycles are homologically equivalent they are
numerically equivalent. Conjecture $D$ predicts the converse holds:

\begin{conj}[Grothendieck's Standard Conjecture $D$]
\label{D}
For any field $k$, Weil cohomology theory $H^*$, and smooth, projective variety $X$ over $k$, if two cycles on $X$ are numerically equivalent then they
  are homologically equivalent.
\end{conj}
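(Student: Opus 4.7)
The plan is to pass to rational coefficients and exploit the bilinear structure coming from Poincar\'e duality. For any Weil cohomology $H^*$, the intersection pairing factors as $(\a,\b) \mapsto \deg(\a \cdot \b)$ through the cycle class map, so two cycles are numerically equivalent precisely when their classes in the algebraic subspace $A^j(X)_\Q := \im(Z^j(X)\otimes\Q \lra H^{2j}(X;\Q))$ are orthogonal to every class in $A^{n-j}(X)_\Q$ under the Poincar\'e pairing, where $n = \dim X$. Hence Conjecture~\ref{D} is equivalent to nondegeneracy of the restricted pairing $A^j(X)_\Q \times A^{n-j}(X)_\Q \to \Q$ for every $j$.

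Next I would introduce the Lefschetz operator. Fix a polarization $\eta \in H^2(X)$ and let $L(-) = \eta \cup -$; hard Lefschetz gives a decomposition of $H^*(X;\Q)$ into primitive pieces $P^i = \ker(L^{n-i+1}\colon H^i \to H^{2n-i+2})$. Assuming the \emph{Lefschetz standard conjecture $B$}---that the inverse $\Lambda$ of $L$ on cohomology is induced by an algebraic correspondence---this decomposition descends to $A^*(X)_\Q$, and the nondegeneracy question reduces block-by-block to nondegeneracy of the pairing on each primitive algebraic piece $PA^j(X)_\Q := A^j(X)_\Q \cap P^{2j}$.

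On each such piece, the Hodge--Riemann bilinear relations (valid when $k \subset \C$ and $H^*$ is singular cohomology) guarantee that $(\a,\b)\mapsto (-1)^j\langle L^{n-2j}\a,\b\rangle$ is positive definite on $P^{2j}(X;\bR)$, and hence nondegenerate on the $\Q$-subspace $PA^j(X)_\Q$. This positivity assertion is Grothendieck's \emph{Hodge standard conjecture $I$}, and together with the Lefschetz conjecture $B$ it formally implies Conjecture~\ref{D}. Positive-definiteness is strictly stronger than what we need, but it is essentially the only available source of nondegeneracy for pairings of this arithmetic type.

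The main obstacle---and what makes Conjecture~\ref{D} famously open in full generality---is that neither conjecture $B$ nor conjecture $I$ is known unconditionally. In characteristic zero with singular cohomology the argument above goes through (and in fact Conjecture~\ref{D} is then implied by the Hodge conjecture), yielding the known cases of curves, surfaces, abelian varieties, and flag-type varieties. In positive characteristic, however, one lacks both the algebraicity of $\Lambda$ and a Hodge-theoretic replacement for Hodge--Riemann positivity in $\ell$-adic or crystalline cohomology, and it is precisely these two ingredients that remain the principal open problems.
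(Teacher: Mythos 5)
The statement you were asked about is a \emph{conjecture}, and the paper does not prove it --- it explicitly records that Conjecture \ref{D} is open in general and only sketches (in Section \ref{maintheorem}) the special case of a smooth projective complex hypersurface, as motivation for the matrix-factorization analogue that is the paper's actual theorem. Your writeup is likewise not a proof: it is the standard reduction of Conjecture \ref{D} to the Lefschetz standard conjecture $B$ plus the Hodge standard conjecture $I$, and you correctly acknowledge that this leaves the matter conditional. So there is no genuine gap relative to what can be done, but it is worth comparing your conditional framework with the paper's unconditional special case. For a hypersurface $Y\subseteq \bP^n_\C$, the Lefschetz hyperplane theorem forces $H^{2j}(Y;\Q)=\Q\cdot h^j$ outside the middle degree, so the ``algebraicity of $\Lambda$'' input that Conjecture $B$ would supply is simply unnecessary: one checks numerical-implies-homological directly against $h^{n-1-j}$ off the middle degree, and in the middle degree one lands in primitive cohomology, where Hodge--Riemann positivity on the image of the cycle class map finishes the argument. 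Your route is the general machine; the paper's is the case where the machine runs unconditionally because the cohomology is so constrained.

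Three smaller points. First, the Hodge--Riemann relations do not make $(\a,\b)\mapsto(-1)^j\langle L^{n-2j}\a,\b\rangle$ positive definite on all of $P^{2j}(X;\bR)$; the sign of the form depends on the Hodge type, and definiteness holds only on the real $(j,j)$ part of the primitive piece. This suffices for your purposes because cycle classes are of type $(j,j)$, but the statement as written is false. Second, Conjecture $I$ is \emph{known} in characteristic $0$ (it is exactly the Hodge--Riemann positivity just invoked); only $B$ is open there, so ``neither $B$ nor $I$ is known unconditionally'' should be restricted to positive characteristic. Third, ``in characteristic zero the argument above goes through'' overstates the situation: it goes through only where $B$ (or a substitute, such as the Hodge conjecture or the degeneration of cohomology used for hypersurfaces and complete intersections) is available, which is why the known cases in the paper's discussion are limited to complete intersections, abelian varieties, and $\dim X\le 4$.
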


Conjecture $D$ remains open in general. It is known to hold, for instance, when $X$ is a complete intersection 
(we sketch the proof for complex hypersurfaces in Section \ref{maintheorem}), and, by work of Lieberman \cite{lieberman}, it holds both when $X$ is an abelian variety and
when $\dim X \le 4$.
\subsection{Noncommutative analogue}
\label{introNC}

Assume now that $\on{char}(k) = 0$.
 Let $\fC$ be a differential $\Z$-graded category over $k$, i.e.~a category enriched over $\Z$-graded complexes of $k$-vector spaces.  We say  $\fC$ is 
\begin{itemize}
\item \emph{smooth} if the $\fC^{\op} \otimes \fC$-module determined by $\fC$ is perfect, and
\item \emph{proper} if the total homology of $\Hom_{\fC}(\a,\b)$ is finite dimensional as a $k$-vector space for all objects $\a$ and
$\b$.
\end{itemize}
Assume that $\fC$ is smooth and proper. To formulate Conjecture $D_{nc}$ for $\fC$, one needs analogues of 
\begin{itemize}
\item a Weil cohomology theory,
\item algebraic cycles,
\item the cycle class map, and
\item the intersection
pairing.
\end{itemize}
These are given by
\begin{itemize}
\item the periodic cyclic homology of $\fC$, written as  $HP_*(\fC)$,
\item classes in the rational Grothendieck group $K_0(\fC)_\Q$,
\item the Chern character map $ch_{HP}: K_0(\fC)_\Q \to HP_0(\fC)$, and
\item the Euler pairing $\chi(-,-)_\fC$.
\end{itemize}

See Section \ref{background} for the definition of the Grothendieck group of a dg-category, and see, for instance, Sections 3 and 4 of \cite{BW} for the definitions of $HP_*(\fC)$ and the Chern character map $ch_{HP}$, respectively.
The Euler pairing is defined on a pair of objects $P, P' \in \Perf(\fC)$
to be
$$
\chi(P, P')_\fC := \sum (-1)^i \dm_k H^i \Hom_{\Perf(\fC)}(P,P').
$$
Since $\fC$ is smooth and proper, $\Perf(\fC)$ is as well \cite[Prop. 13]{toen}, and thus the pairing is well-defined. 
(One really just needs $\Perf(\fC)$ to be proper for the Euler pairing to be well-defined.)

Classes $\a, \b$ in $K_0(\fC)_\Q$ are said to be {\em homologically equivalent} if $ch_{HP}(\a) = ch_{HP}(\b)$, and they are said to be {\em numerically equivalent} if the functions
$$
\chi(\a, -)_\fC,  \chi(\b, -)_\fC : K_0(\fC)_\Q \to \Q
$$
coincide, or, equivalently, if $\chi(\a - \b, -)_\fC$ is the zero function. 

We may now state Conjecture $D_{nc}$ in characteristic $0$: 
\begin{conj}[\cite{MT}]
\label{DNC}
If $\fC$ is a smooth and proper differential $\Z$-graded category over a field $k$ of characteristic $0$,
homological and numerical equivalence coincide for $\fC$. 
\end{conj}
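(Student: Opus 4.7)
The plan is to reduce Conjecture \ref{DNC} to a non-degeneracy statement for a canonical pairing on $HP_0(\fC)$, and then to attempt that non-degeneracy via a noncommutative analogue of Hodge-theoretic positivity. The first step is to invoke the \emph{noncommutative Mukai pairing} $\langle -,-\rangle_{\on{Muk}}$ on $HP_*(\fC)$ (due to Shklyarov for smooth proper dg-categories) together with the comparison formula
\[
\chi(P,P')_{\fC} \;=\; \langle ch_{HP}(P),\; ch_{HP}(P') \rangle_{\on{Muk}}
\]
for all $P, P' \in \Perf(\fC)$. This formula immediately yields the easy direction (homologically equivalent implies numerically equivalent) and shows that the two equivalence relations agree on $K_0(\fC)_\Q$ \emph{if and only if} the restriction of $\langle -,-\rangle_{\on{Muk}}$ to the image of $ch_{HP} \colon K_0(\fC)_\Q \to HP_0(\fC)$ is non-degenerate. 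All subsequent work is directed at this non-degeneracy.

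The model for proving such non-degeneracy is the commutative case $\fC = \Perf(X)$: a choice of polarization, Hard Lefschetz, and the Hodge index theorem produce a definite Hermitian form that dominates the Poincar\'e pairing after restriction to algebraic classes, which forces the cycle class map to identify numerical and homological equivalence. I would try to import this picture into the noncommutative setting in two stages. Stage (a): equip $HP_*(\fC)$ with a Hodge filtration by appealing to Kaledin's theorem that the noncommutative Hodge-to-de-Rham spectral sequence degenerates for smooth proper dg-categories in characteristic $0$. Stage (b): produce a noncommutative polarization and a noncommutative Lefschetz operator compatible with $\langle -,-\rangle_{\on{Muk}}$, and derive from them a Hodge-index-type positivity statement on the algebraic lattice $\on{Im}(ch_{HP})$, at which point non-degeneracy is automatic.

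The principal obstacle is stage (b). For an abstract smooth proper dg-category there is no known notion of polarization, nor any general noncommutative Lefschetz datum, and without such an input stage (a) alone does not deliver non-degeneracy on $\on{Im}(ch_{HP})$. In special classes one has substitutes: for $\fC = \Perf(X)$ the commutative case supplies positivity; for an admissible subcategory of $\Perf(X)$ one can try to inherit positivity from the ambient polarized variety; for a dg-category with a full exceptional collection the Mukai pairing is visibly non-degenerate on the basis provided by the collection; and for $\fC$ the $\Z/2$-graded category of matrix factorizations of an isolated hypersurface singularity the strategy pursued in the present paper replaces (b) by a direct computation showing that the Euler pairing is positive semi-definite on $\on{Im}(ch_{HP})$, which, combined with the comparison formula, is exactly what is needed.

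Consequently, I expect any attack on the general conjecture to go through only to the extent that the missing polarization datum is replaced by a hands-on verification that the Euler pairing is semi-definite on the algebraic image. This semi-definiteness is the hinge of the commutative proof, and in my view the genuinely hard and currently inaccessible step in full generality; identifying a category-theoretic principle that forces semi-definiteness of $\chi(-,-)_\fC$ on $\on{Im}(ch_{HP})$ for every smooth proper dg-category would, via the reduction above, suffice to settle Conjecture \ref{DNC}.
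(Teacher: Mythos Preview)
The statement you were asked to prove is Conjecture~\ref{DNC}, and the paper does \emph{not} prove it: it is stated there as an open conjecture due to Marcolli--Tabuada, and the paper establishes only the $\Z/2$-graded analogue in the special case of matrix factorizations of an isolated hypersurface singularity (Theorem~\ref{introthm}). So there is no ``paper's own proof'' to compare against.

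Your proposal is not a proof either, and you say so yourself: after the reduction to non-degeneracy of the Mukai pairing on $\on{Im}(ch_{HP})$ via Shklyarov's Hirzebruch--Riemann--Roch (which is correct, and is exactly the reduction the paper records in the paragraph following Conjecture~\ref{DNC}), you correctly identify that stage~(b) --- producing a noncommutative polarization and Lefschetz operator for an arbitrary smooth proper dg-category --- has no known solution, and that Kaledin's degeneration alone does not suffice. That diagnosis is accurate; this is precisely why the conjecture remains open.

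Your survey of special cases is also on target. In particular, the paper's approach to the matrix factorization case follows exactly the template you sketch: rather than a noncommutative polarization, it imports a genuine polarized mixed Hodge structure from the topology of the Milnor fiber, shows that the Chern character lands in the ``Hodge locus'' $\ker(N)\cap F^{(n+1)/2}$ where the polarizing form is positive definite, and then identifies this form with the Euler pairing via the higher residue pairing and Polishchuk--Vaintrob's HRR. So your intuition that semi-definiteness of $\chi$ on the algebraic image is the hinge is borne out by what the paper actually does --- but only in a setting where classical Hodge theory can be smuggled in through the back door.
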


\begin{rem}
By a theorem of Marcolli-Tabuada \cite[Theorem 1.1]{MTK}, the notion of numerical equivalence described above coincides with that of \cite[Section 3.2]{MT}, and it follows directly from the definition of $ch_{HP}$ that the above notion of homological equivalence agrees with that of \cite[Section 10]{MT}.
\end{rem}

\begin{rem}
A positive characteristic version of Conjecture $D_{nc}$ is posed in \cite{tabuada2}, where the role of periodic cyclic homology is played by topological periodic cyclic homology.
\end{rem}

It follows from work of Shklyarov \cite[Theorems 2 and 3]{shklyHRR} that the Euler pairing factors through the map $ch_{HP}$. Therefore, just as in the classical setting, classes homologically equivalent to $0$ are numerically
equivalent to $0$; that is, the content of Conjecture $D_{nc}$ is: 
\begin{quote}
Given a class $\a \in K_0(\cC)_\Q$, if $\chi(\a, \b)_\fC = 0$ for all $\b \in K_0(\cC)_\Q$, then $ch_{HP}(\a) = 0$.
\end{quote}

Conjectures \ref{D} and \ref{DNC} are related by a theorem 
of Tabuada \cite[Theorem 1.1]{tabuada}, which states that, for a smooth, projective variety $X$ over a field of characteristic $0$,
Conjecture $D$ holds for $X$ if and only if 
Conjecture $D_{nc}$ holds for the dg-category $\Perf(X)$ of perfect complexes on $X$. 

One may also state an analogue of Conjecture $D_{nc}$ for differential $\Z/2$-graded categories $\fC$, i.e. categories enriched over $\Z/2$-graded complexes of $k$-vector spaces. The notions of smooth and
proper generalize in an evident manner to this setting, as do the constructions $HP_*(-)$, $K_0(-)_\Q$, $ch_{HP}$, $\chi(-,-)_{\fC}$ and the notions of numerical and homological equivalence. One shows that homological equivalence implies numerical equivalence by adapting \cite[Theorems 2 and 3]{shklyHRR} to the $\Z/2$-graded setting.

\begin{ex}
If $Q$ is a (non-graded) commutative $k$-algebra, and $f \in Q$ is any element, matrix factorizations of $f$ form a $k$-linear differential $\Z/2$-graded category, written
$mf(Q,f)$; see   \cite[Definition 2.1]{dyckerhoff}. Section \ref{mf} below contains additional background on matrix factorizations. 
By a theorem of Preygel (\cite[Theorem 8.1.1(iii)]{preygel}), if $Q$ is smooth over $k$, and $\Sing(Q/f)$ is zero dimensional, $mf(Q,f)$ is smooth and proper. 
\end{ex}

\subsection{Main theorem}
\label{maintheorem}

We may now state our main result. 

\begin{thm} \label{introthm} 
Let $k$ be a field of characteristic $0$, 
$Q$ a smooth $k$-algebra, 
and $f \in Q$ a non-zero-divisor such that the singular locus of $\Spec(Q/f)$ is a finite set of points. 
Then, for $\a \in K_0(mf(Q,f))$, we have
\begin{itemize}
\item $\chi(\a, \a)_{mf(Q,f)} \geq  0$, and
\item $\chi(\a, \a)_{mf(Q,f)} = 0$ if and only if $ch_{HP}(\a) = 0$. 
\end{itemize}
In particular, the analogue of Conjecture
$D_{nc}$ for smooth and proper differential $\Z/2$-graded categories holds for $mf(Q,f)$.  
\end{thm}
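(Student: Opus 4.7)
The plan is to reduce to a local situation with a single critical point, convert the claim into one about a bilinear form on $HP_0$ via Hirzebruch-Riemann-Roch, identify that form with a Hodge-theoretic polarization on vanishing cohomology, and conclude via the Hodge-Riemann bilinear relations.

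First, I would use a Zariski- or \'etale-local decomposition of the singularity category (equivalently, of the dg-category of matrix factorizations up to Morita equivalence) to split $K_0(mf(Q,f))$ and $HP_*(mf(Q,f))$ as orthogonal direct sums indexed by the singular points of $\Spec(Q/f)$. This reduces both (i) and (ii) to the case where $f$ has exactly one isolated critical point, and it also allows me to replace $Q$ by its completion or henselization there without altering any invariant of interest. After this reduction I may assume $Q$ is a smooth local $k$-algebra and $f$ has a unique critical point at the maximal ideal.

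Next, I would apply a $\Z/2$-graded analogue of Shklyarov's Hirzebruch-Riemann-Roch theorem to produce a symmetric bilinear form $\langle -, - \rangle_{HP}$ on $HP_0(mf(Q,f))$ satisfying
$$\chi(\alpha, \beta)_{mf(Q,f)} = \langle ch_{HP}(\alpha), ch_{HP}(\beta) \rangle_{HP}.$$
Assertion (i) then says $\langle -, - \rangle_{HP}$ is positive semi-definite on $\image(ch_{HP}) \otimes_\Q \R$; granting (i), Cauchy-Schwarz shows that the nontrivial direction of (ii) is equivalent to the form having trivial radical on $\image(ch_{HP})$, i.e.~being positive definite there. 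Thus everything reduces to positive definiteness of $\langle -, - \rangle_{HP}$ on $\image(ch_{HP})$.

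To bring in Hodge theory, I would identify, after base change to $\C$, the group $HP_0(mf(Q,f))$ with the twisted de Rham cohomology $H^*(\Omega_{Q/k}^\bu, df \wedge -)$, via a Koszul/HKR computation. For an isolated critical point this is naturally identified with the vanishing cohomology of the Milnor fiber of $f$, equipped with Steenbrink's limit mixed Hodge structure and its canonical polarization on the primitive part. An HKR-style computation should then show that the HRR pairing $\langle -, - \rangle_{HP}$ agrees, up to signs dictated by the $\Z/2$-grading and a Todd-class normalization, with this polarization.

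The main obstacle is the final positivity step: verifying that $\image(ch_{HP})$ lands in the subspace of vanishing cohomology on which the Hodge-Riemann bilinear relations produce positivity. In analogy with the classical proof of Conjecture $D$ for hypersurfaces---where the cycle class map lands in $H^{p,p}$ and Hodge-Riemann gives positivity on primitive $(p,p)$-classes---one expects $ch_{HP}$ of a matrix factorization to sit in a specific Hodge-filtered piece of vanishing cohomology of controlled parity. Establishing this requires an explicit description of $ch_{HP}$ for matrix factorizations (building on work of Polishchuk-Vaintrob and the authors' earlier computations) and a careful reconciliation of sign conventions between the $\Z/2$-graded HRR pairing and the classical Hodge-Riemann relations; this reconciliation is, I expect, the technical heart of the argument.
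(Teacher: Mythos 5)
Your overall architecture is the one the paper follows: reduce to a single critical point over $\C$, express $\chi$ through a Chern character and a Riemann--Roch-type pairing, transport the question to the vanishing cohomology of the Milnor fiber, and conclude by the positivity built into Steenbrink's polarized mixed Hodge structure (this is exactly Lemma \ref{lem94} applied to classes in $\ker(N)\cap F^{\frac{n+1}{2}}$). You have also correctly located the technical heart in showing that the Chern character lands in the right Hodge-filtered piece; that is Theorem \ref{thm915}(2), proved via the explicit form $\psi_0(ch_{X_\infty}(A,B))=\tfrac{1}{(2\pi i)^p}\del_t^{p-1}s_0\bigl(\tfrac{2\tr((dA\,dB)^p)}{(n+1)!}\bigr)$.

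The one step that would fail as literally stated is the ``natural identification'' of $HP_0(mf(Q,f))$ with the polarized vanishing cohomology. The twisted de Rham/HKR computation gives $HP_0\cong \hOmega^{n+1}/df\wedge d\hOmega^{n-1}$ (up to completion, the Brieskorn lattice $H_0''$), which has dimension $\mu$; but the polarization $S$ lives on $H^n(X_\infty;\Q)_1$, the \emph{unipotent-monodromy} part, whose dimension is strictly smaller than $\mu$ unless the monodromy is unipotent. There is no isomorphism between these two spaces in general, so you cannot simply ``transport'' the Shklyarov pairing to $S$. What the paper does instead is embed both into the Gau\ss-Manin module $\cG_0$ --- $H^n(X_\infty;\C)_1\cong C_0$ via $\psi_0$, and $H_0''$ via $s_0$ into $\V$ --- and compare the two pairings only on the image of the Chern character, using Hertling's higher residue pairing $P_S$ and its compatibility with $\del_t^{-1}$ together with Polishchuk--Vaintrob's residue formula for $\chi$ (this is Theorem \ref{BvS}). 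Relatedly, the back-and-forth between $ch_{X_\infty}$ vanishing and $ch_{HP}$ vanishing is not automatic from an isomorphism: it uses injectivity of $s_0$ and of $\del_t^{p-1}$ on the relevant subspaces, plus injectivity of $HN_0\to HP_0$, i.e.\ noncommutative Hodge-to-de Rham degeneration (Theorem \ref{thm822}). Finally, two small omissions: the reduction to a polynomial $f$ on an affine open of $\A^{n+1}_\C$ needs finite determinacy and the Cohen structure theorem before any Milnor fiber exists, and the case of even $n$ must be dispatched separately since then $HP_0=0$ and $\chi\equiv 0$. One point where your expectation can be simplified: no ``primitive part'' restriction is needed here, since the image of the Chern character already lies in $\ker(N)\cap F^p$, where the PMHS axioms give definiteness outright (see Remark \ref{referee} for why this differs from the projective hypersurface picture).
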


To give an idea of how the proof goes, we begin by reviewing the proof of Conjecture $D$ for a smooth, projective complex hypersurface $Y \subseteq
\bP^{n}_\C$.  This is more than a mere analogy: if $Q = k[x_0, \dots, x_n]$, and $f$ is a homogeneous polynomial, the Euler pairing for $mf(Q,f)$ is
explicitly related to the classical intersection pairing on the smooth projective hypersurface $Y = \Proj(Q/f)$; see \cite{MPSW} for details.

Conjecture $D$ amounts to the following assertion:
\begin{quote}
Given a cycle $\a \in Z^j(Y)$, if $\langle cy(\a), cy(\b) \rangle = 0$ for all cycles $\b \in
Z^{n-1-j}(Y)$, then $cy(\a) = 0$.
\end{quote}
Here, $\langle -,- \rangle$ denotes the pairing on $H^*(Y,\Q)$ given as the composition 
$$
H^*(Y,\Q) \otimes H^*(Y,\Q) \xra{\cup} H^*(Y,\Q) \onto H^{2(n-1)}(Y,\Q) \cong \Q.
$$

Let $h \in H^2(Y; \Q)$ be the cohomology class of a generic hyperplane section of $Y$. 
Then 
\begin{itemize}
\item $H^{2j}(Y; \Q) = \Q \cdot h^j \cong \Q$ whenever $2j \ne n-1$, and
\item $H^{2j+1}(Y; \Q) = 0$ whenever $2j+1 \ne n-1$.
\end{itemize}

Suppose $\a \in Z^j(Y)$ satisfies $\langle cy(\a), cy(\b) \rangle = 0$ for all cycles $\b \in Z^{n-1-j}(Y)$. 
If $cy(\a) = q h^j$ for $q \in \Q$, then $\langle cy(\a), h^{n-1-j}\rangle = q \deg(h^{n-1}) = q \deg(Y)$. Since $h$ is algebraic and $\deg(Y) >0$, we conclude
$cy(\a) = 0$. In particular, we may assume $n$ is odd and $j = \frac{n-1}{2}$. 
Moreover, the map $h^{j-1} \cap - : H^{2j+2}(Y; \Q) \to H^{2n-2}(Y; \Q)$ is an isomorphism, and hence $cy(\a) \cap h = 0$. That
is, $cy(\a)$ belongs to $PH^{n-1}(Y)$, where $PH^*$ denotes the primitive part of the cohomology of a smooth projective variety.

Finally, classical Hodge theory gives that, for any smooth projective complex variety $X$, 
the intersection pairing is either positive or negative definite (depending on the parity of $\dm(X)$) 
on
$$
\im(cy: Z^*(X) \to H^{2*}(X; \Q)) \cap P H^{2*}(X; \Q).
$$
Since $\langle cy(\a), cy(\a) \rangle = 0$, it follows that $cy(\a) = 0$. 

Our proof of Theorem \ref{introthm} parallels the above proof.  We first reduce to the case where $k = \C$, $U := \Spec(Q)$ is a Zariski open
neighborhood of the origin in $\A^{n+1}_\C$, and the only singularity of $f: U \to \A^1_\C$ is at the origin; this is the content of Section \ref{reduction}. In this situation, we have the associated (universal) Milnor fiber
$X_\infty$, whose singular cohomology $H^n(X_\infty; \C)$ in degree $n$ is a direct sum of polarized mixed Hodge structures.
The role of $H^*(Y; \Q)$ in the proof sketched above is played, in our proof of Theorem \ref{introthm}, by $H^n(X_\infty; \C)_1$, the summand of $H^n(X_\infty; \C)$ on which the operator $(M - \id)$ acts nilpotently,  where $M$ is the monodromy operator. We recall the necessary background concerning the Milnor fiber in Section \ref{milnorbackground}.

We prove Theorem \ref{introthm} by establishing the following facts:
\begin{enumerate}
\item There is a map $ch_{X_\infty}: K_0(mf(Q, f)) \to H^n(X_\infty; \Q)_1$
such that the polarizing form $S$ on $H^n(X_\infty; \Q)_1$ is positive definite on the image of $ch_{X_\infty}$ (Proposition \ref{prop915} and Corollary \ref{cor913}).

\item The pairings $S(ch_{X_\infty}(-), ch_{X_\infty}(-))$ and $\chi(-, -)_{mf(Q,f)}$ coincide (Theorem \ref{BvS}).

\item
$ch_{X_\infty}(\a) = 0$ if and only if $ch_{HP}(\a) = 0$ (Theorem \ref{thm822}).
\end{enumerate}

\begin{rem}
\label{bvsrem}
Step (2) in the above sketch of our proof of Theorem \ref{introthm} may be thought of as an analogue of Polishchuk-Vaintrob's Hirzebruch-Riemann-Roch formula for matrix factorizations (see (\ref{PVresult}) below). It was inspired by a similar result of Buchweitz-van Straten (\cite[Main Theorem (ii), p.~245]{BvS}) which compares Hochster's theta pairing (defined below in Section \ref{application}) to the linking pairing associated to a complex hypersurface singularity.
\end{rem}

\begin{rem}
\label{referee}
Theorem \ref{introthm} implies that the canonical pairing on $HP_0(mf(Q, f))$ (see \cite{shklyresidue} for the definition of this pairing) is positive definite on the image of $ch_{HP}$. On the other hand, the intersection form on the cohomology of a projective hypersurface of even dimension is positive definite only when restricted to primitive cohomology. We explain the reason for this discrepancy via the following example. 

Suppose, in the setting of Theorem \ref{introthm}, that $Q = \C[x_0, \dots, x_n]$, where $n$ is odd, and $f \in Q$ is homogeneous of degree $n$. Then $X = \Proj(Q/(f))$ is smooth and Calabi-Yau. By a famous result of Orlov (\cite[Corollary 2.15]{orlovCY}), it follows that there is a quasi-equivalence of smooth and proper differential $\Z$-graded categories
$$
\Perf(X) \xra{\simeq} mf^{\on{gr}}(Q, f),
$$
where $mf^{\on{gr}}(Q, f)$ denotes the dg-category of $\Z$-graded matrix factorizations of $f$. We therefore have an induced quasi-equivalence of smooth and proper differential $\Z/2$-graded categories
$$
\Perf(X)^{\Z/2} \xra{\simeq} mf^{\on{gr}}(Q, f)^{\Z/2},
$$
where $( - )^{\Z/2}$ denotes the $\Z/2$-folding of a differential $\Z$-graded category. There is an obvious functor
\begin{equation}
\label{forgetgrading}
mf^{\on{gr}}(Q, f)^{\Z/2} \to mf(Q, f)
\end{equation}
of differential $\Z/2$-graded categories given by forgetting the grading. $H^*(X ; \C)$ and $HP_0(mf(Q, f))$ are explicitly related by the composition
$$
HP_0(\Perf(X)^{\Z/2}) \xra{\cong} HP_0(mf^{\on{gr}}(Q, f)^{\Z/2}) \to HP_0(mf(Q, f));
$$
the fact that (\ref{forgetgrading}) is not typically an equivalence accounts for the discrepancy. 

\end{rem}

\subsection{Application to a conjecture in commutative algebra}
\label{application}
As an application of the positive semi-definiteness statement in Theorem \ref{introthm}, we make progress
on a conjecture in commutative algebra concerning ``Hochster's  theta pairing", whose definition we now recall. Let $Q$ and $f$ be as in Theorem \ref{introthm},
and set $R := Q/(f)$. Any finitely generated $R$-module has an eventually 2-periodic projective resolution, and, because of the assumption on the singular locus of $R$,
sufficiently high Tor's between finitely generated $R$-modules are of finite length. 
\emph{Hochster's theta pairing} is the map
$\theta : G_0(R) \times G_0(R) \to \Z$ given by
$$
([M], [N]) \mapsto \len_R \Tor_{2i}^R(M, N) - \len_R \Tor_{2i + 1}^R(M, N), \text{ } i \gg 0.
$$

We prove the following in Section \ref{hochstersection}:
\begin{thm}
\label{hochster}
If $k$, $Q$ and $f$ are as in Theorem \ref{introthm}, and $Q$ is equi-dimensional, then  $(-1)^{\frac{\dim Q}{2}} \theta(-,-)$ is positive semi-definite.
\end{thm}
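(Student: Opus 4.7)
The plan is to deduce Theorem \ref{hochster} from Theorem \ref{introthm}: I will show that $\theta$ descends to a pairing on $K_0(mf(Q,f))_\Q$ and agrees, up to the sign $(-1)^{\dim Q/2}$, with the Euler pairing $\chi(-,-)_{mf(Q,f)}$, perhaps after a duality twist that does not affect positive semi-definiteness.

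The first step is the descent. Since $\theta(M, -)$ vanishes whenever $M$ has finite projective dimension over $R := Q/(f)$, and Buchweitz's equivalence $\Dsg(R) \simeq [mf(Q, f)]$ induces an isomorphism on rational Grothendieck groups, $\theta$ factors through a bilinear form on $K_0(mf(Q,f))_\Q$. The second and main step is a comparison identity of the form
$$
(-1)^{\dim Q/2}\, \theta(\alpha, \beta) \;=\; \chi(\alpha, \beta)_{mf(Q,f)},
$$
possibly after composing the right-hand side with MCM duality. For MCM $R$-modules $M, N$ with matrix factorizations $P_M, P_N$, once the projective resolution of $M$ becomes $2$-periodic, the alternating sum $\len_R \Tor^R_{2i}(M,N) - \len_R \Tor^R_{2i+1}(M,N)$ for $i \gg 0$ can be read off from $P_M \otimes_Q N$, while $\chi(P_M, P_N)_{mf(Q,f)}$ computes the corresponding alternating sum of stable Ext dimensions. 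The two are linked by the standard duality between stable Tor and stable Ext for MCM modules over a hypersurface with isolated singularity, and the $(\dim Q)$-fold degree shift in that duality accounts for the sign. When $\dim Q$ is odd, $\theta$ is known to vanish identically in this setup, so the statement is automatic.

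With the comparison in hand, the conclusion is immediate: since MCM double duality is the identity on $K_0$ and the Euler pairing is well-behaved under it, we obtain $(-1)^{\dim Q/2} \theta(\alpha, \alpha) = \chi(\alpha, \alpha)_{mf(Q,f)} \geq 0$ from Theorem \ref{introthm}. The main technical obstacle is the comparison identity itself, specifically isolating the correct sign from the duality-induced dimension shifts. A useful template is Buchweitz--van Straten's identification (Remark \ref{bvsrem}) of $\theta$ with the linking pairing on the Milnor fiber; combining their theorem with Theorem \ref{BvS} of the present paper (which expresses $\chi(-,-)_{mf(Q,f)}$ via the polarizing form $S$ on Milnor cohomology), together with the classical relation between the linking pairing and $S$, should yield the required identity.
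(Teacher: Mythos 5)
Your overall strategy --- descend $\theta$ to $K_0(mf(Q,f))_\Q$, prove a comparison identity with the Euler pairing, and invoke Theorem \ref{introthm} --- is exactly the paper's strategy (its proof is: reduce to the local complex case as in Section \ref{reduction}, then apply Corollaries \ref{cor819} and \ref{chitheta}). But your justification of the comparison identity has a genuine gap at precisely the point where the paper does its real work.

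The formal consequence of Tor--Ext duality for MCM modules over a hypersurface is $\theta(M,M') = -h(M^*,M')$, where $h$ is the Herbrand difference and $M^*$ is the $R$-dual (see \cite[Corollary 6.4.1]{buchweitz}); combined with $\chi(\sigma(M),\sigma(M'))_{mf(Q,f)} = h(M,M')$ this gives $\theta(\a,\a) = -\chi(D\a,\a)$, where $D$ is the duality involution on $K_0$. This is \emph{not} a degree shift producing a sign $(-1)^{\dim Q/2}$: in the $2$-periodic setting a $(\dim Q)$-fold shift contributes at most $(-1)^{\dim Q}=+1$, and in any case positive semi-definiteness of $\chi(\g,\g)$ says nothing about the twisted quantity $-\chi(D\a,\a)$. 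What you actually need is that $D\a \equiv -(-1)^{\dim Q/2}\a$ modulo numerical equivalence, i.e.\ Corollary \ref{theta}: $\theta(M^*,M') = -(-1)^{\dim Q/2}\theta(M,M')$. This is a special case of a Dao--Kurano conjecture and is not formal. In the paper it follows from Theorem \ref{adams} (vanishing of $\theta(\a,-)$ for $\a\in G_{0,(i)}(Z)$ with $i\neq \frac{\dim Q}{2}$), whose proof uses the topological $K$-theory class $\gamma:K_0^Z(Y)\to KU(B_\e,X_\infty)$, compatibility with Adams operations, and the fact that $\widetilde{H}^*(X_\infty)$ is concentrated in degree $n$ so that only one Adams eigenspace survives; one then feeds this into Fulton's computation of how duality acts on the graded pieces of $G_0\otimes\Q$. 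Your phrase ``MCM double duality is the identity on $K_0$ and the Euler pairing is well-behaved under it'' papers over exactly this step.

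Your fallback suggestion --- combine Buchweitz--van Straten's identification of $\theta$ with the linking form on the Milnor fiber with Theorem \ref{BvS} and the relation between the linking form and the polarizing form $S$ --- is a plausible alternative route in spirit, but as written it is only a gesture: you would still need to check that the two Chern characters (theirs into $KU$/homology, the paper's $ch_{X_\infty}$) agree, that the relevant classes land in the monodromy-invariant part where Remark \ref{E914g} applies, and that all normalizations and signs match. As it stands, the proposal does not contain a proof of the comparison identity, which is the entire content of the theorem beyond Theorem \ref{introthm}.
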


When $\dm Q$ is odd, the conclusion of this theorem  is interpreted as meaning $\theta = 0$, which was proven independently by Buchweitz-van Straten and
Polishchuk-Vaintrob (\cite[Main Theorem (i), p.~245]{BvS}, \cite[Remark 4.1.5]{PV}). 
The case when $\dm Q$ is even settles a conjecture of Moore-Piepmeyer-Spiroff-Walker \cite[Conjecture 3.6]{MPSW} in characteristic $0$.
This was previously known in the case where $R$ is a \emph{graded} hypersurface (loc. cit. Theorem 3.4).

The proof of Theorem \ref{hochster} uses techniques in algebraic and topological $K$-theory. In more detail: we recall that $G_0(R) \otimes \Q$ is isomorphic to $\bigoplus_{i} A_i(R) \otimes \Q$, where $A_i(R)$ denotes the group of dimension $i$ cycles in $\Spec(R)$ modulo rational equivalence. Assume $\dm(Q)$ is even. The key step in the argument is a proof of a special case of a conjecture of Dao-Kurano (\cite[Conjecture 3.4]{DK}) which predicts that $\theta( \a,  - ) : G_0(R) \to \Z$ is the zero map for any $\a$ corresponding to a class in  $A_i(R) \otimes \Q$ such that $i \ne \frac{\dim(Q)}{2}$. This is the content of Theorem \ref{adams}, whose proof uses techniques from topological $K$-theory developed in \cite{BvS} and expanded on in \cite{brown}. 

As an immediate corollary of Theorem \ref{adams}, we conclude a special case of another conjecture of Dao-Kurano (\cite[Conjecture 3.1(4)]{DK}) which predicts that, if $M$ and $M'$ are MCM $R$-modules, then $\theta(M^*, M') = -(-1)^{\frac{\dim(Q)}{2}}\theta(M, M')$, where $M^*$ denotes the $R$-linear dual of $M$; this is Corollary \ref{theta}.
\vskip\baselineskip
\noindent {\bf Acknowledgements.} We thank the referee for his or her careful reading and helpful suggestions. 

\section{Background}
\label{background}

Throughout the paper, $k$ denotes a  field of characteristic $0$. 

\subsection{The Grothendieck group of a dg-category}
\label{K0}

For the rest of this paper, ``dg-category'' means ``$k$-linear differential $\Z/2$-graded category", unless otherwise specified. We recall here a bit of background concerning dg-categories, and we refer the reader to \cite{toen} for a comprehensive introduction.
\begin{itemize}

\item The \emph{homotopy category} of a dg-category $\fC$ is the category with the same objects as $\fC$ and morphisms given by the $k$-vector spaces
  $H^0 \Hom_{\fC}(-,-)$. We write $[\fC]$ for  the homotopy category of $\fC$.

\item Given a dg-category $\fC$, let $\Perf(\fC)$ denote the dg-category of perfect right $\fC$-modules, i.e.~the triangulated hull of $\fC$, in the sense of \cite{toen}.
\item A dg functor $\fC \to \fD$ is a \emph{quasi-equivalence} if 
\begin{itemize}
\item the maps on morphism complexes are quasi-isomorphisms, and
\item the induced map $[\fC] \to [\fD]$ on homotopy categories is essentially surjective.
\end{itemize}
\item A dg functor $F: \fC \to \fD$
is a \emph{Morita equivalence} if the induced map
$$
F^* : \Perf(\fD) \to \Perf(\fC)
$$
on triangulated hulls is a quasi-equivalence. 
\end{itemize}

\begin{rem} 
\label{pretri}
When $\fC$ and $\fD$ are pretriangulated, as defined in \cite[Section 3]{BK}, a dg functor $F: \fC \to \fD$ 
is a quasi-equivalence (resp., a Morita equivalence) if and only if the induced functor  $[\fC] \to [\fD]$ (resp., $[\fC]^{\idem} \to [\fD]^{\idem}$)
is an equivalence. Here, the superscript ``$\idem$" indicates the idempotent completion of a triangulated category; see \cite{BS} for details.
\end{rem}

For any triangulated category  $T$, we write $K_0^{\Delta}(T)$ for its Grothendieck group, i.e.~the free abelian group on isomorphism classes of
objects of $T$ modulo relations given by exact triangles.
The \emph{Grothendieck group} $K_0(\fC)$ of a dg-category $\fC$ 
is defined to be $K_0^\Delta([\Perf(\fC)])$.
If $\fC$ is pre-triangulated, there is a canonical isomorphism $K_0(\fC) \cong K_0^{\Delta}([\fC]^{\on{idem}})$. (Note that the
canonical map $K_0^\Delta([\fC]) \to K_0^{\Delta}([\fC]^{\on{idem}})$ need not be an isomorphism.)

Recall from Section \ref{introNC} that, when $\fC$ is a proper dg category, $K_0(\fC)$ is equipped with an Euler pairing $\chi$; we only defined $\chi$ in the
setting of differential $\Z$-graded categories, but, as noted in Section \ref{introNC}, there is an evident analogue for differential $\Z/2$-graded
categories. When $\fC$ is pretriangulated, 
the canonical isomorphism
$$
K_0(\fC) \cong K_0^{\Delta}([\fC]^{\on{idem}})
$$
identifies $\chi$ with  the pairing on $K_0^{\Delta}([\fC]^{\on{idem}})$ given as follows: 
Recall from \cite{BS} that the objects of $[\fC]^{\on{idem}}$ are
pairs $(P, e)$ with $P$ an object of $\fC$ and $e \in \End_{[\fC]}(P, P)$ an idempotent. A morphism 
from $(P, e)$ to $(P', e')$ is a morphism $\a: P \to P'$ in $[\fC]$ such that $\a \circ e = e' \circ \alpha = \alpha$.   
The pairing is given by 
\begin{align*}
\langle [P, e], [P', e'] \rangle 
& =  \dim_k\Hom_{[\fC]^{\on{idem}}}((P, e), (P', e')) - \dim_k \Hom_{[\fC]^{\on{idem}}}((P, e), (P'[1], e'[1])),
\end{align*}
where $[1]$ denotes the translation functor for the triangulated category $[\fC]$. 

\subsection{Matrix factorizations}
\label{mf}

Let $Q$ be a regular $k$-algebra and $f \in Q$ a non-zero-divisor. The dg-category $mf(Q,f)$ of matrix factorizations of $f$ is defined as follows: 
\begin{itemize}
\item An object is a pair $(P,d)$ (usually written as just $P$), where $P$ is a finitely generated $\Z/2$-graded  projective
$Q$-module written $P = P_1 \oplus P_0$, and $d$ is an odd degree $Q$-linear endomorphism such that $d^2$ is multiplication by $f$.
\item For any two objects $P = (P,d)$ and  $P' = (P', d')$, $\Hom_{mf}(P,P')$
is the $\Z/2$-graded complex of finitely generated projective $Q$-modules $\Hom_Q(P,P')$ with differential $\del$ given by
$$
\del(\alpha) = d' \circ \a - (-1)^{|\a|}
\a \circ d.
$$
\item The composition rule and identities are the obvious ones.
\end{itemize}  

Set $R := Q/(f)$, and let $D^b(R)$ denote the dg quotient of the differential $\Z$-graded category of bounded chain complexes of $R$-modules by the subcategory spanned by acyclic complexes. $D^b(R)$ is a dg enhancement of the bounded derived category of $R$; this enhancement is unique by \cite[Theorem 8.13]{LO}).
Let $D^b(R)/\Perf(R)$ denote the dg quotient of $D^b(R)$ by the subcategory spanned by perfect complexes. By a theorem of Buchweitz (\cite{buchweitz}), there is a quasi-equivalence
$$
mf(Q, f)  \xra{\simeq}   D^b(R)/\Perf(R)
$$
of differential $\Z$-graded dg-categories, where $mf(Q, f)$ is regarded as $\Z$-graded by ``unfolding''.

A {\em free matrix factorization} is an object $(P,d)$ of $mf(Q,f)$ such that $P$ is a free $Q$ module of finite rank. 
Since $f$ is a non-zero-divisor, $\rk(P_0) = \rk(P_1)$. 
Upon choosing bases of these components, a free matrix factorization may thus be represented by 
a pair of $r \times r$ matrices, $(A,B)$,  with entries in $Q$ such that $AB = BA = fI_r$, where $r$ is the common rank of $P_0$ and $P_1$.

Since $mf(Q, f)$ is pretriangulated, $K_0(mf(Q,f)) = K_0^\Delta([mf(Q,f)]^{\idem})$. 
In particular, objects of $K_0(mf(Q,f))$ are represented by pairs $(P, e)$,
where $P \in mf(Q,f)$, and $e$ is an idempotent endomorphism of 
$P$ in the homotopy category $[mf(Q,f)]$. 
If $\Sing(R)$ consists of just one maximal ideal $\fm$, there is an equivalence
$$
[mf(Q,f)]^{\on{idem}} \cong [mf(\widehat{Q}, f)],
$$
where $\widehat{Q}$ denotes the $\fm$-adic completion of $Q$ (\cite[Theorem 5.7]{dyckerhoff}).


\section{Reduction to the case of a polynomial over $\C$}
\label{reduction}
In this section, we reduce the proof of Theorem \ref{introthm} to a special case.

\begin{prop} \label{prop1}
Theorem \ref{introthm} holds in general provided it holds in the following special case:
\begin{enumerate}
\item $k = \C$;
\item $Q = \C[x_0, \dots, x_n][1/h]$ for some odd integer $n$ and some polynomial $h$ 
such that $h(0, \dots, 0) \ne 0$, so that $U := \Spec(Q)$ is an affine  Zariski open neighborhood of the origin in $\A^{n+1}_\C$;
\item $f \in \C[x_0, \dots, x_n] \subseteq Q$; and
\item the only singular point of the morphism $f|_U: U \to \A^1_\C$ is the origin.
\end{enumerate}
\end{prop}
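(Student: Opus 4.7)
The plan is to perform four reductions in sequence, each exploiting the compatibility of $K_0$, the Euler pairing $\chi$, periodic cyclic homology $HP_*$, and the Chern character $ch_{HP}$ with base change of the ground field and with idempotent completion, and using Dyckerhoff's theorem (\cite[Theorem 5.7]{dyckerhoff}) that these invariants of $mf(Q, f)$ depend only on the completion of $Q$ at each isolated singular point of $f$. \emph{Base change to $k = \C$.} All four invariants are compatible with extension of scalars $k \to K$, and the hypothesis that $\Sing(Q/f)$ is finite (a Jacobian condition) is preserved under field extension. For any such $K/k$, faithful flatness gives $\chi(\alpha, \alpha)_{mf(Q,f)} = \chi(\alpha_K, \alpha_K)_{mf(Q_K, f)}$ and $ch_{HP}(\alpha) = 0$ if and only if $ch_{HP}(\alpha_K) = 0$, so the theorem for $(Q_K, f)$ implies the theorem for $(Q, f)$. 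Since $(Q, f)$ is defined over a finitely generated subfield $k_0 \subseteq k$, which embeds into $\C$ by the Lefschetz principle, we may assume $k = \C$.

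\emph{Single singular point and polynomial $f$ on $\A^{n+1}_\C$.} Let $p_1, \dots, p_r$ be the singular points of $f$, and use prime avoidance to pick for each $i$ an affine Zariski open $U_i \subseteq \Spec(Q)$ containing $p_i$ but no other $p_j$. Combining Dyckerhoff's theorem with the decomposition of the singularity category of $Q/(f)$ by support yields an equivalence $[mf(Q,f)]^{\idem} \simeq \prod_i [mf(\mathcal{O}(U_i), f)]^{\idem}$ compatible with all four invariants; this reduces the problem to a single singular point $p$. Writing $d = \dim Q$, fix an isomorphism $\widehat{Q}_p \cong \C[[x_0, \dots, x_{d-1}]]$ and let $\hat{f}$ be the image of $f$, a power series defining an isolated singularity at the origin. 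By finite determinacy of isolated hypersurface singularities, truncating $\hat{f}$ at sufficiently high degree produces a polynomial $f' \in \C[x_0, \dots, x_{d-1}]$ with $\C[[x_0, \dots, x_{d-1}]]/(f') \cong \widehat{Q}_p/(f)$. Choose $h \in \C[x_0, \dots, x_{d-1}]$ with $h(0) \neq 0$ so that the origin is the only singular point of $f'$ on $\Spec(\C[x_0, \dots, x_{d-1}][1/h])$. A further application of Dyckerhoff's theorem then identifies $[mf(Q,f)]^{\idem}$ with $[mf(\C[x_0, \dots, x_{d-1}][1/h], f')]^{\idem}$, putting us in the polynomial form with $n = d - 1$.

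\emph{Parity reduction.} When $d = n+1$ is odd (equivalently $n$ even), the Serre duality structure on $mf(Q,f)$ forces $\chi$ to be skew-symmetric, so $\chi(\alpha, \alpha) = 0$ automatically and positivity is trivial; the remaining content that $ch_{HP} \equiv 0$ in this parity can be extracted from the Buchweitz-van Straten/Polishchuk-Vaintrob vanishing of $\theta$ (\cite[Main Theorem (i)]{BvS}, \cite[Remark 4.1.5]{PV}). We are therefore reduced to $n$ odd, completing the proof.

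\emph{Main obstacle.} The trickiest bookkeeping will occur in the polynomial reduction: one must verify that the chain of equivalences---two applications of Dyckerhoff's theorem together with the identification induced by finite determinacy---preserves not merely the triangulated structure but the full data $(K_0, \chi, ch_{HP})$. This requires functoriality of periodic cyclic homology and its Chern character under completion of $Q$ at $p$ and under the power-series automorphism intertwining $\hat{f}$ with $f'$.
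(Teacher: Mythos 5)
Your overall reduction strategy matches the paper's: spread out to a finitely generated subfield and embed in $\C$; localize to isolate a single singular point via a Morita equivalence of matrix factorization categories; use the Cohen structure theorem plus finite determinacy to replace $f$ by a polynomial on a Zariski neighborhood of the origin; and finally dispose of the even-$n$ case. Steps one through three are essentially the paper's Steps 1--3 (the paper does the localization before the base change and then re-runs it, but the order is immaterial), and the ``main obstacle'' you flag is resolved exactly as you suspect: Morita invariance of $K_0$ and $HP_0$ together with naturality of $ch_{HP}$, applied to the chain $mf(Q,f)\simeq mf(\widehat{Q},f)\cong mf(\C[[x_0,\dots,x_n]],\Psi(f))\simeq mf(\C[x_0,\dots,x_n][1/h],p)$, where the middle arrow is an honest isomorphism of dg-categories induced by the ring isomorphism. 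Note, though, that the product decomposition $[mf(Q,f)]^{\idem}\simeq\prod_i[mf(Q[1/h_i],f)]^{\idem}$ is not free: the paper proves it (Lemma \ref{lem2}) by semi-localizing, completing, and checking that the stabilized residue fields generate and have unchanged endomorphism dg-algebras.

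The genuine gap is in your parity step. When $n$ is even you must show $ch_{HP}(\a)=0$ for \emph{every} $\a$, and you propose to ``extract'' this from the Buchweitz--van Straten/Polishchuk--Vaintrob vanishing of $\theta$. That inference does not exist: $\theta\equiv 0$ (equivalently, vanishing of the Euler pairing) is a purely numerical statement about a pairing on $K_0$, while $ch_{HP}(\a)=0$ is a homological statement; deducing the latter from the former is precisely the content of Conjecture $D_{nc}$, i.e.\ the theorem you are in the middle of proving, so the argument is circular. (Your Serre-duality observation correctly gives $\chi(\a,\a)=0$ in this parity, but that only handles the inequality and one direction of the ``if and only if.'') The correct and much simpler argument, which is what the paper uses for Step 4, is that the target of the Chern character is already zero: by Dyckerhoff's computation of the Hochschild and periodic cyclic homology of $mf(Q,f)$ for an isolated singularity, the homology is concentrated in degree $n+1 \pmod 2$, so $HP_0(mf(Q,f))=0$ when $n$ is even. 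This makes $ch_{HP}\equiv 0$ trivially, and also yields $\chi\equiv 0$ via Shklyarov's factorization of the Euler pairing through $ch_{HP}$, so the theorem holds vacuously in that case.
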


The proof will require a pair of lemmas.

\begin{lem} \label{lem2} 
Let $Q$ be a regular $k$-algebra and $f \in Q$ a non-zero-divisor such that $\Sing(Q/f)$ is a finite set of maximal ideals $\{\fm_1, \dots, \fm_m\}$. Suppose $h_1, \dots, h_m \in Q$ are such that 
$\fm_i \in \Spec(Q[1/h_i])$, and $\fm_j \notin \Spec(Q[1/h_i])$ for all $j \ne i$. 
Then the  natural dg functor
$$
mf(Q,f) \to \prod_{i=1}^m mf(Q[1/h_i], f)
$$
is a Morita equivalence.
\end{lem}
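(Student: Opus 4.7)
The plan is to reduce the statement to a Zariski-style descent equivalence for singularity categories. Since both $mf(Q,f)$ and the product $\prod_i mf(Q[1/h_i], f)$ are pretriangulated dg-categories, Remark~\ref{pretri} reduces the Morita equivalence claim to showing that the induced functor
$$[mf(Q,f)]^\idem \longrightarrow \prod_{i=1}^m [mf(Q[1/h_i], f)]^\idem$$
is an equivalence of idempotent-complete triangulated categories. Setting $R := Q/(f)$ and $R_i := R[1/h_i] = Q[1/h_i]/(f)$, Buchweitz's theorem (recalled in Section~\ref{mf}) rewrites this as the base-change functor
$$\Phi \colon \Dsg(R)^\idem \longrightarrow \prod_i \Dsg(R_i)^\idem.$$

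Next I would translate the hypotheses into geometry. The open subscheme $U_i := \Spec(R_i) \subseteq \Spec(R)$ meets $\Sing(R)$ in exactly $\{\fm_i\}$, so for $i \ne j$ the intersection $U_i \cap U_j$ lies in the regular locus of $R$, where $\Dsg$ vanishes. The open $V := U_1 \cup \cdots \cup U_m$ therefore contains all of $\Sing(R)$.

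The third step is descent, where I would combine two standard inputs: (a) a localization theorem for singularity categories (due to Orlov), which gives that the restriction $\Dsg(R)^\idem \xrightarrow{\simeq} \Dsg(V)^\idem$ is an equivalence because $V$ contains $\Sing(R)$; and (b) a Mayer--Vietoris argument which, combined with the vanishing of $\Dsg$ on each pairwise overlap $U_i \cap U_j$, yields an equivalence $\Dsg(V)^\idem \xrightarrow{\simeq} \prod_i \Dsg(U_i)^\idem$. Composing these realizes $\Phi$ as an equivalence, and one checks routinely that $\Phi$ matches the natural functor of the lemma under Buchweitz's identifications, since both are implemented by the same base-change operation.

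The main obstacle is justifying the descent input (b) cleanly. The subtle point is that $\Dsg$ enjoys the relevant Zariski descent only \emph{after} idempotent completion: without it, restriction to an open subscheme containing the singular locus is only fully faithful with essentially dense image, not essentially surjective. This is precisely why the lemma is phrased in terms of Morita equivalence (which, via $\Perf$, automatically incorporates idempotent completion) rather than quasi-equivalence. Given the freedom to idempotent-complete, however, gluing local objects and morphisms in $\Dsg$ through the cover $\{U_i\}$ should proceed in a standard way using the vanishing on overlaps.
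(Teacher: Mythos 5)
Your route is genuinely different from the paper's, and its endpoints are sound: the reduction via Remark \ref{pretri} to idempotent-completed homotopy categories, the translation through Buchweitz's theorem, and the observation that the pairwise overlaps $U_i \cap U_j$ lie in the regular locus (so $\Dsg$ vanishes there) are all correct. The problem is that your step (b) --- the Mayer--Vietoris decomposition $\Dsg(V)^{\idem} \xrightarrow{\ \simeq\ } \prod_i \Dsg(U_i)^{\idem}$ --- is not an off-the-shelf ``standard'' fact; it is essentially the entire content of the lemma, and you have deferred it rather than proved it. The singularity category is a Verdier quotient, and Verdier quotients do not commute with the homotopy limits implicit in Zariski descent; even the input (a) you cite (Orlov's restriction theorem) is only an equivalence onto a dense subcategory before completing idempotents. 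To make (b) rigorous you would need localization sequences with supports for both $\Perf$ and $D^b(\coh)$, excision to identify the ``kernel'' of $\Dsg(V)^{\idem} \to \Dsg(U_i)^{\idem}$ with a category of objects supported on $V \setminus U_i$, and Orlov's density result again to see that this kernel is all of $\prod_{j \ne i}\Dsg(U_j)^{\idem}$ --- each step valid only up to direct summands. That argument can be carried out, but as written your proof has a hole exactly where the work is.

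For comparison, the paper sidesteps gluing entirely. It first semi-localizes at $\{\fm_1,\dots,\fm_m\}$ (a quasi-equivalence on $mf$ by Orlov's localization), and then passes to $\fm$-adic completions, where the desired product decomposition becomes an honest ring isomorphism $\widehat{Q'} \cong \prod_i \widehat{Q'_i}$ and hence an isomorphism of dg-categories --- no descent needed. The only remaining point is that $mf(Q',f) \to mf(\widehat{Q'},f)$ is a Morita equivalence, which is checked by exhibiting the sum of stabilized residue fields as a generator on both sides and verifying that completion does not change its endomorphism dg-algebra (its cohomology is finite length, supported at the singular points). Your approach, if completed, would buy a statement of independent interest (Zariski codescent for idempotent-completed singularity categories along covers with regular overlaps), but the completion route is shorter because all the hard analytic content is concentrated in results already available in the literature (Dyckerhoff's Theorem 5.7 and Orlov's localization).
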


\begin{proof}
Let $Q'$ be the semi-localization of $Q$ at the list $\{\fm_1, \dots, \fm_m\}$, and let $Q_i'$ denote the localization of $Q$ at $\fm_i$. 
The natural maps
$$
mf(Q,f) \to mf(Q',f)  \and mf(Q[1/h_i], f) \to mf(Q'_i, f)
$$
are quasi-equivalences by \cite[Proposition 1.14]{orlov}, so it suffices to prove the functor
$$
mf(Q', f) \to \prod_{i=1}^m mf(Q'_i, f)
$$
is a Morita equivalence. Let $\widehat{Q'}$ denote the $\fm_1 \cap \cdots \cap \fm_m$-adic completion of $Q'$, and let $\widehat{Q'_i}$ denote the $\fm_i$-adic completion of $Q'_i$. The natural maps $Q' \to Q_i'$ induce maps $\widehat{Q'} \to \widehat{Q'_i}$, and the induced map
$$
\widehat{Q'} \to \widehat{Q'_1} \times \cdots \times \widehat{Q'_m}
$$
is an isomorphism. The bottom horizontal map in the commutative diagram
\begin{equation}
\label{productdiagram}
\xymatrix{
mf(Q', f)  \ar[r] \ar[d] & mf(\widehat{Q'}, f) \ar[d]  \\
\prod_{i = 1}^m mf(Q'_i, f)  \ar[r] & \prod_{i =1}^m mf(\widehat{Q'_i}, f)
}
\end{equation}
is a Morita equivalence by \cite[Theorem 5.7]{dyckerhoff}, and the right-most vertical map is an \emph{isomorphism} of dg-categories, i.e.~a dg functor such that the map on objects is a bijection and the maps on morphism complexes are isomorphisms. Thus, it suffices to show the top horizontal map is a Morita equivalence. 

Let $k_1^{\on{stab}}, \dots, k_m^{\on{stab}} \in mf(Q', f)$ denote the objects corresponding to the residue fields
$Q'/ \fm_1$, $\dots$, $Q'/ \fm_m$; we will use the same notation for the corresponding objects of $mf(\widehat{Q'}, f)$. (Here, ``stab" stands for
``stabilization".) 
By \cite[Theorem 3.5]{LP},
$\oplus_{i = 1}^m  k_i^{\on{stab}}$ is a generator of $[mf(Q', f)]$, and the corresponding object of $[mf(\widehat{Q'}, f)]$ is also a generator.
Finally, we claim that the natural map
$$
\End_{mf(Q', f)}(\oplus_{i = 1}^m  k_i^{\on{stab}}) \to  \End_{mf(\widehat{Q'}, f)}(\oplus_{i = 1}^m  k_i^{\on{stab}}) 
$$
is a quasi-isomorphism. The cohomology of the source (resp., target) computes the ``stable Ext" modules over $Q'/(f)$ (resp., $\widehat{Q'}/f$)
of the direct sum of the residue fields $k_1, \dots, k_n$ against itself.
The cohomology of the target is the $\fm_1 \cap \cdots \cap \fm_m$-adic completion of the cohomology of the source;
since the cohomology of the source is supported in $\fm_1 \cap \cdots \cap \fm_m$, the map to the completion is an isomorphism.
\end{proof}

\begin{lem} \label{lem822} Suppose $k$, $Q$ and $f$ are as in Theorem \ref{introthm}, and $\a$ belongs to $K_0(mf(Q,f))$. 
For any field extension $k \subseteq k'$, set $Q' = Q \otimes_k k'$ and $f' = f \otimes 1 \in Q$, and let $\a'$ be the image of $\a$ under
the natural map $K_0(mf(Q,f)) \to K_0(mf(Q',f'))$ induced by extension of scalars. 
Then,
\begin{enumerate}
\item $k'$, $Q'$ and $f'$ also satisfy the hypotheses of Theorem \ref{introthm},
\item $\chi(\a, \a)_{mf(Q,f)} = \chi(\a', \a')_{mf(Q',f')}$, and
\item $ch_{HP}(\a) = 0$ if and only if $ch_{HP}(\a') = 0$. 
\end{enumerate}

\end{lem}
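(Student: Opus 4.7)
The plan is to establish (1), (2), (3) by exploiting flatness of the base change $k \to k'$ together with standard compatibilities of smoothness, Hom-complexes, and cyclic homology.

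For (1), I will use that smoothness, the non-zero-divisor property, and the singular locus all behave well under faithfully flat base change. Since $Q/k$ is smooth, the base change $Q' = Q \otimes_k k'$ is smooth over $k'$; since $Q \to Q'$ is faithfully flat, $f' = f \otimes 1$ remains a non-zero-divisor. For the singular locus, the projection $\pi : \Spec(Q'/f') \to \Spec(Q/f)$ is faithfully flat with smooth (geometrically regular) fibers, so $\pi^{-1}(\Sing(Q/f)) = \Sing(Q'/f')$. Because $Q$ is of finite type over $k$, every closed point of $\Spec(Q/f)$ has residue field finite over $k$ (Nullstellensatz), and hence pulls back to a finite scheme over $k'$. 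Thus $\Sing(Q'/f')$ remains a finite set of points.

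For (2), I note first that the natural dg functor $F : mf(Q,f) \to mf(Q',f')$ sends $(P,d) \mapsto (P \otimes_k k', d \otimes 1)$. Since each $P$ is finitely generated projective over $Q$, there is a natural isomorphism of $\Z/2$-graded complexes
\[
\Hom_{mf(Q',f')}(F(P), F(P')) \cong \Hom_{mf(Q,f)}(P, P') \otimes_k k'.
\]
Taking cohomology commutes with the flat base change $- \otimes_k k'$, and dimension over the ground field is preserved: $\dim_{k'}(V \otimes_k k') = \dim_k V$. The identification of $K_0(mf(Q,f))$ with $K_0^{\Delta}([mf(Q,f)]^{\idem})$ (combined with the obvious compatibility of idempotent completion with base change) reduces the Euler pairing to such dimensions, giving the desired equality.

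For (3), I will use that $\fC := mf(Q,f)$ is smooth and proper over $k$ by Preygel's theorem, so $HH_*(\fC)$ is finite dimensional. Consequently, periodic cyclic homology commutes with the base change, yielding a natural isomorphism $HP_0(\fC) \otimes_k k' \xrightarrow{\cong} HP_0(\fC \otimes_k k') = HP_0(mf(Q',f'))$. Naturality of $ch_{HP}$ under base change then identifies $ch_{HP}(\alpha')$ with $ch_{HP}(\alpha) \otimes 1$. Since $k \to k'$ is faithfully flat, the map $HP_0(\fC) \to HP_0(\fC) \otimes_k k'$ is injective, so $ch_{HP}(\alpha) = 0$ if and only if $ch_{HP}(\alpha') = 0$. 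The main technical subtlety is the base change formula for $HP_0$ in this step: without smoothness and properness, the inverse limit defining $HP$ need not commute with tensor product, but the finiteness of $HH_*$ stabilizes the limit and delivers the clean base change statement.
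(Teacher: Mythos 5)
Your proposal is correct and follows essentially the same route as the paper: part (1) via base change of the singular locus, part (2) via the flat base-change isomorphism on Hom-complexes (extended to the idempotent completion) together with $\dim_{k'}(V\otimes_k k')=\dim_k V$, and part (3) via naturality of $ch_{HP}$ plus injectivity coming from the base-change isomorphism for $HP_0$. The only difference is cosmetic: you justify the $HP_0$ base-change isomorphism by invoking finiteness of $HH_*$ to control the inverse limit, whereas the paper simply asserts $HP_0(Q\otimes_k k'/k')\cong HP_0(Q/k)\otimes_k k'$.
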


\begin{proof}
The first assertion follows from the isomorphism $\Sing(Q'/f') \cong \Sing(Q/f) \times_{\Spec k} \Spec k'$ of $k'$-schemes
(using the reduced subscheme structures on $\Sing(Q'/f')$ and $\Sing(Q/f)$). 

For any finite length $Q$-module $N$, we have $N \otimes_Q Q' \cong N \otimes_k k'$ and hence $\dm_{k'}(N \otimes_Q Q') = \dm_k(N)$.
Given objects  $P_1, P_2$ of $[mf(Q, f)]$ equipped with idempotents $e_1, e_2$, set $P'_i = P_i \otimes_Q Q'$, $e'_i = e_i \otimes \id$.
Then we have a canonical isomophism
$$
\Hom_{[mf(Q', f')]^{\on{idem}}}((P'_1, e'_1), (P'_2 , e'_2 ))
\cong
\Hom_{[mf(Q, f)]^{\on{idem}}}((P_1, e_1 ), (P_2, e_2 )) \otimes_Q Q'. 
$$
This proves the second assertion. 

The final assertion holds since the map $ch_{HP}$ is natural, and the map on its targets
$$
HP_0(Q/k) \to HP_0(Q'/k')
$$
is injective, since  $HP_0(Q \otimes_k k'/k') \cong HP_0(Q/k) \otimes_k k'$.
\end{proof}

\begin{proof}[Proof of Proposition \ref{prop1}]
We split the proof into four steps. 
\vskip\baselineskip
\noindent {\bf Step 1: Reduction to the case where $f: \Spec(Q) \to \A^1_k$ has only one singular point.} Suppose $\Sing(Q/f) = \{\fm_1, \dots, \fm_m\}$ for $m
> 1$. By generic smoothness on the target, $f : \Spec(Q) \to \A^1_k$ has only finitely many critical values. 
(Note that this requires $\on{char}(k) = 0$.) 
Let $V \subseteq \A^1_k$ denote the Zariski open subset given by the complement of the nonzero critical values of $f$, and let $U \subseteq \Spec(Q)$ 
denote the fiber product $V \times_{\A^1_k} \Spec(Q)$; $U$ is an affine open subset of $\Spec(Q)$. By \cite[Proposition 1.14]{orlov}, there is a quasi-equivalence 
$$
mf(Q, f) \xra{\simeq} mf(U, f|_U)
$$
given by extension of scalars, and so, without loss of generality, we may assume the only critical value of $f: \Spec(Q) \to \A^1_k$ is 0, 
i.e.~the singular points of the morphism $f: \Spec(Q) \to \A^1_k$ coincide with the singular locus of $Q/f$.

Choose $h_1, \dots, h_m \in Q$ such that the only singular point of $f: \Spec(Q) \to \A^1_k$ on $\Spec(Q[1/h_i])$ is $\fm_i$, and set $Q_i := Q[1/h_i]$. By Lemma \ref{lem2},
we have a Morita equivalence of dg-categories
\begin{equation}
\label{morita}
mf(Q,f) \xra{\simeq} \prod_{i = 1}^m mf(Q_i, f).
\end{equation}
This induces an isomorphism of inner product spaces
$$
K_0(mf(Q,f)) \xra{\cong}
\bigoplus_i 
K_0(mf(Q_i,f)),
$$
where the source is equipped with the pairing $\chi(-,-)_{mf(Q,f)}$, and the target with the pairing $\oplus_i \chi(-,-)_{mf(Q_i,f)}$.
The Morita equivalence (\ref{morita}) also induces an isomorphism 
$$
HP_0(mf(Q,f))  \xra{\cong} \bigoplus_i HP_0(mf(Q_i, f)),
$$
and, by the naturality of the Chern character map, 
$ch_{HP(mf(Q,f))}$ corresponds to $\oplus_i ch_{HP(mf(Q_i,f))}$ under these isomorphisms.

\vskip\baselineskip
\noindent{\bf Step 2: Reduction to $k = \C$.} By Step 1,  we may assume $f$ has only one singularity.
We may find a subfield $k_0$ of $k$ having finite transcendence degree over $\Q$, a smooth $k_0$-algebra $Q_0$, an element $f_0 \in Q_0$ 
and a class $\a_0 \in K_0(mf(Q_0, f_0))$ 
so that
$Q = Q_0 \otimes_{k_0} k$, $f = f_0 \otimes 1$ and $\a_0 \mapsto \a$. Since $f$ has only one singular point, so does $f_0$. By Lemma \ref{lem822},
we may therefore assume $k$ has finite transcendence degree over $\Q$. 
Then there is an embedding $k \subseteq \C$, so we may apply Lemma \ref{lem822}  again to reduce to the case where $k =
\C$. Note that $f \otimes 1 \in Q \otimes_k \C$ may no longer have just one singularity, but we can apply Step 1 again, noting that the argument for that step
does not involve changing  the ground field.

\vskip\baselineskip  
\noindent{\bf Step 3: Reduction to the case where $Q = \C[x_0, \dots,
  x_n][1/h]$ for some $h$ and $f \in \C[x_0, \dots, x_n]$.}
Let $\widehat{Q}$ denote the completion of $Q$ at the singular point of $f$. By the Cohen Structure Theorem, there is a $\C$-algebra isomorphism 
$$
\Psi: \widehat{Q} \cong \C[[x_0, \dots, x_n]],
$$
and by ``finite determinacy" (see, for instance, \cite[Theorem 4.1]{greuel}), there is a $\C$-algebra automorphism $\Phi$ of $\C[[x_0, \dots, x_n]]$ 
such that $p := \Phi(\Psi(f)) \in \C[x_0, \dots, x_n]$.  Observe that 
$\C[[x_0, \dots, x_n]]/p$ has an isolated singularity. Applying the argument in Step 1, choose $h \in \C[x_0, \dots, x_n]$ such that $h$ does not vanish at the origin, and the only singularity of the map 
$$
p: \Spec(\C[x_0, \dots,  x_n][1/h]) \to \A^1_\C
$$ 
is at the origin. We have a chain of Morita equivalences
$$
mf(Q, f) \simeq mf(\widehat{Q}, f) \simeq mf(\C[[x_0, \dots, x_n]], \Psi(f)) \simeq mf(\C[x_0, \dots, x_n][1/h], p).
$$
The first and third Morita equivalences follow from \cite[Theorem 5.7]{dyckerhoff}, and the second is in fact an isomorphism of dg categories. The Morita invariance of $K_0$ and $HP_0$, along with the naturality of the Chern character, give the commutative diagram
\begin{equation}
\label{commdiag}
\xymatrix{
K_0(mf(Q,f)) \ar[r]^-{\cong} \ar[d]^-{ch_{HP}} & K_0(mf(\C[x_0, \dots, x_n][1/h], p)) \ar[d]^-{ch_{HP}}  \\
HP_0(mf(Q,f)) \ar[r]^-{\cong} & HP_0(mf(\C[x_0, \dots, x_n][1/h], p)),
} 
\end{equation} 
from which the claim follows.

\vskip\baselineskip
\noindent{\bf Step 4: Reduction to the case where $n$ is odd.}
When $n$ is even, $HP_0(mf(Q,f)) = 0$ by \cite[Theorem 6.6, Section 7]{dyckerhoff}.
\end{proof}

\section{The Milnor fibration}
\label{milnorbackground}

We now recall some background concerning the Milnor fibration. Everything in this section is likely well-known to experts in the field. We found Hertling's paper
\cite{Hertling}, his book \cite{hertlingbook},
and Kulikov's book \cite{Kulikov} to be particularly valuable. 

Throughout this section,
\begin{itemize}
\item $Q = \C[x_0, \dots, x_n][1/h]$ for some $h \notin \fm := (x_0, \dots, x_n)$,
\item $U = \Spec(Q)$, and 
\item $f \in Q$ is such that the only singularity of the morphism $f: U  \to \A^1_\C$ is at $\fm$.
\end{itemize}

\subsection{The Milnor fiber and its monodromy operator}
\label{milnorfiber}
Let $\epsilon, \eta$ be positive real numbers. Assume $\epsilon$ is chosen to be so small that $B_\epsilon \subseteq U$, where $B_\epsilon$ denotes the open ball in $\A^{n+1}_\C$ of radius $\epsilon$ centered at the origin. We set some more notation: 

\begin{itemize}
\item $T$ is the open disc of radius $\eta$ centered at the origin in $\A^1_\C$;
\item $X := f^{-1}(T) \cap B_\epsilon$;
\item by a slight abuse of notation, $f: X \to T$ denotes the map induced by $f$;
\item $T' := T \setminus \{0\}$;
\item  $f' : X' \to T'$ is the pullback of $f$, so that $X' = X
\setminus f^{-1}(0)$.
\end{itemize}

For $\epsilon$ small enough and $\eta \ll \epsilon$, $f'$ is a fibration, the \emph{Milnor fibration}. We will be interested in the fiber of $f'$,
the \emph{Milnor fiber}. Let $T_\infty \to T'$ be the universal cover of $T'$; explicitly, $T_\infty$ is a suitable open half plane. Let 
$f_\infty: X_\infty \to T_\infty$ be the pullback of $f'$. For each $t \in T'$, a choice of a lifting of $t$ to $\tilde{t} \in T_\infty$
determines a diffeomorphism $f^{-1}(t) \xra{\cong} f^{-1}_\infty(\tilde{t})$, and the inclusion map
$f_\infty^{-1}(\tilde{t}) \xra{\simeq} X_\infty$ is a homotopy equivalence. 
To avoid making a choice of fiber of $f'$,
we will consider the space $X_\infty$. By a famous theorem of Milnor \cite{milnor}, $X_\infty$ is homotopy equivalent to a wedge sum of $\mu$ copies of $S^n$, where
$$
\mu := \dim_\C Q/(  \frac{ \partial f }{ \partial x_0 }, \dots, \frac{ \partial f }{ \partial x_n } ) < \infty.
$$
In particular, its (reduced) cohomology is concentrated in degree $n$. Since $f'$ is a fibration over $T'$, $H^n(X_\infty ;  \Z)$ is equipped with a monodromy operator $M$. The group $H^n(X_\infty ;  \Z)$ equipped with its monodromy is a rich topological invariant of the morphism $f : U \to \A^1_\C$; we refer the reader to \cite[Chapter 3]{dimca} for a detailed discussion. For instance, by a theorem of Steenbrink (\cite{steenbrink}), the subgroup
$$
H^n(X_\infty; \Z)_1:= \bigcup_{j \geq 1}  \ker((M-\id)^j)
$$
may be equipped with a polarized mixed Hodge structure (PMHS) of level $n+1$ (see Appendix \ref{appendix} for background on PMHS's). The goal of the rest of this section is to describe Steenbrink's PMHS. 

\begin{rem}
\label{E913c}
When necessary, we write $T_\eta$, $X_{\e, \eta}$, $T'_\eta$, $X'_{\e, \eta}$, $X_\infty^{\e, \eta}$ and $T_\infty^{\eta}$ 
to indicate the dependence on these parameters. If $\e, \eta$ are chosen such that $X'_{\epsilon, \eta} \to T'_\eta$ is a fibration, and we have $\e' \leq \e$ and $\eta' \leq \eta$, then the induced maps from $T_{\eta'}$, $X_{\e', \eta'}$, etc. to
$T_\eta$, $X_{\e, \eta}$, etc. are all diffeomorphisms. In particular, we have an isomorphism
$$
H^n(X^\infty_{\e, \eta}; \Z) \xra{\cong}H^n(X^\infty_{\e', \eta'}; \Z). 
$$
\end{rem}

\subsection{The Gau\ss-Manin connection}
\label{GM}

To describe Steenbrink's polarized mixed Hodge structure on $H^n(X_\infty; \Z)_1$,
we realize $H^n(X_\infty; \C)_1$ as a subspace of a certain $D$-module $\cG_0$, the \emph{Gau\ss-Manin connection}, which we now describe. 
Our reference for this section is \cite[Section 4]{Hertling}.

The $n^{\th}$ higher direct image $\R^n f'_* \C_{X'}$ of $f': X' \to T'$ applied to the constant sheaf $\C_{X'}$  is a
complex vector bundle on $T'$ whose fiber over $t \in T'$ is $H^n(X_t; \C)$, where $X_t$ denotes the fiber of $X' \to T'$ over $t$. Let $\cE$ be the
sheaf of holomorphic sections of this bundle; that is, $\cE$ is the sheaf $\R^n f'_* \C_{X'}  \otimes_{\C_{T'}} \cO^{an}_{T'}$ of
$\cO^{an}_{T'}$-modules. 
Let $i: T' \into T$ be the inclusion, and define the sheaf $\cG := i_* \cE$
on $T$. For an open subset $V$ of $T$, $\Gamma(V, \cG)$ is a subspace of the collection of all functions 
sending $t \in V \setminus \{0\}$ to a class in $H^n(X_t; \C)$. 
Finally, define $\cG_0$ to be the stalk of $\cG$ at the origin. So, we may identify
$\cG_0$ as a subspace of the collection of germs of functions sending $t$ to a class in $H^n(X_t; \C)$, for $0 < || t || \ll 1$.

Write $\C\{t\}$ for the DVR consisting of power series in $t$ having a positive radius of convergence. By construction, $\cG_0$ is a $\C\{t\}[t^{-1}]$-vector space
(in fact, $\dim_{\C\{t\}[t^{-1}]}\cG_0 = \mu$, where $\mu$ is as defined in \ref{milnorfiber}). Moreover,  $\cG_0$ is a \emph{$D$-module};
that is, it is equipped with a $\C$-linear ``covariant differentiation'' endomorphism $\del_t$ satisfying
$\del_t t = \id + t \del_t$. In other words, $\cG_0$ is a module over the Weyl algebra $\C\{t\}\langle \partial_t \rangle$.
We will describe the operator $\del_t$ explicitly in Remark \ref{rem914} below.

For each complex number $\a$,  we define a $\C$-linear subspace
\begin{equation} \label{E129}
C_\a = \bigcup_{j \geq 1} \ker\left((t \del_t - \a)^j: \cG_0 \to \cG_0\right)
\end{equation}
of $\cG_0$. The following lemma explains the relationship between $\cG_0$ and $H^n(X_\infty; \C)_1$.
In this lemma and hereafter, we write $H^n_{deR}(Y; \C)$ for the $n$-th de Rham cohomology space of a complex manifold $Y$. 
In general, the complex vector space 
$H^n_{deR}(Y, \C)$ is defined to be the $n$-th hypercohomology of the holomorphic de Rham complex $\Omega^{*, \an}_{Y}$, but when
$Y$ is a Stein manifold, as it will be in all the cases that arise in this paper, it is given by the $n$-th cohomology of the global sections of the complex $\Omega^{*, \an}_{Y}$.
The isomorphism $H^n_{deR}(Y; \C) \cong H^n(Y; \C)$ is induced by the canonical map of complexes of sheaves $\C_Y \to \Omega^{*, \an}_{Y}$.
When $Y$ is Stein, it may also be defined by integrating closed forms along classes in $H_{n}(Y; \C)$.

\begin{lem}[Section 4 of \cite{Hertling}]  
\label{lem913}
There is an isomorphism of complex vector spaces
$$
\psi_0: H^n(X_\infty; \C)_1 \xra{\cong} C_0 \subseteq  \cG_0
$$
such that the composition of 
$$
H^n_{deR}(X'; \C) \cong H^n(X'; \C) \xra{\can} H^n(X_\infty; \C)_1 \xra{\psi_0} \cG_0
$$
sends $[\omega]$, for any $\omega \in \ker(\Omega^{\an, n}_{X'} \xra{d}\Omega^{\an, n+1}_{X'})$,
to the element of $\cG_0$ represented by the function
$$
t \mapsto [\omega|_{X_t}] \in H^n_{deR}(X_t; \C) \cong H^n(X_t; \C), \text{ for $0 < t \ll 1$}.
$$
\end{lem}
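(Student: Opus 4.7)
The plan is to realize $H^n(X_\infty; \C)_1$ inside $\cG_0$ via Deligne's canonical construction, which converts a multivalued flat section with unipotent monodromy into a single-valued section of $\cE$ on $T'$ that extends holomorphically across $0 \in T$. The hardest step will be justifying this extension, which ultimately rests on the Gau\ss-Manin connection having a regular singularity at $0$.

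To begin, I would observe that since $X_\infty \to T_\infty$ is the pullback of the Milnor fibration $f'\colon X' \to T'$ along the universal cover $T_\infty \to T'$, the space $H^n(X_\infty; \C)$ is canonically identified with the space of multivalued flat sections of the local system $\R^n f'_* \C_{X'}$ on $T'$, and the geometric monodromy $M$ corresponds to parallel transport around a small positively oriented loop about $0$. Under this identification, $H^n(X_\infty; \C)_1$ is the generalized eigenspace of $M$ for the eigenvalue $1$, on which $N := \log M = \sum_{k \geq 1} \frac{(-1)^{k+1}}{k}(M - \id)^k$ is a well-defined nilpotent endomorphism. For $\sigma \in H^n(X_\infty; \C)_1$, regarded as a multivalued flat section of $\cE$, I then form the section
$$
\tilde\sigma(t) := \exp\!\left(-\frac{\log t}{2\pi i}\, N\right)\sigma(t),
$$
which is single-valued on $T'$ because the monodromy of $\log t$ exactly cancels the action of $M = e^N$ on $\sigma$.

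Next I would appeal to Brieskorn's theorem that the Gau\ss-Manin connection associated to an isolated hypersurface singularity is regular singular at the critical value, together with Deligne's description of the corresponding canonical extension: the single-valued sections $\tilde\sigma$ produced above extend holomorphically across $0$ and, upon specialization, span precisely the subspace $C_0$ of $\cG_0$. This is the main technical obstacle of the argument. Granted the extension, $\tilde\sigma$ defines a germ $\tilde\sigma_0 \in \cG_0$, and differentiating, using that $\sigma$ is Gau\ss-Manin flat, yields
$$
t\,\partial_t\,\tilde\sigma = -\frac{N}{2\pi i}\,\tilde\sigma,
$$
so $\tilde\sigma_0 \in C_0$. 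Setting $\psi_0(\sigma) := \tilde\sigma_0$ defines the desired $\C$-linear map $H^n(X_\infty; \C)_1 \to C_0$. Injectivity is immediate, since $\tilde\sigma_0 = 0$ forces $\tilde\sigma \equiv 0$ on a punctured neighborhood of $0$ and therefore $\sigma = \exp\!\left(\tfrac{\log t}{2\pi i} N\right)\tilde\sigma = 0$; and surjectivity follows from the inverse construction $g \mapsto \exp\!\left(\log t \cdot t\partial_t\right) g$ on $C_0$, or equivalently from the dimension count $\dim H^n(X_\infty; \C)_1 = \dim C_0$ furnished by the regular-singular theory.

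Finally, I would verify the description of $\psi_0$ on de Rham classes coming from $X'$. If $\omega \in \Omega^{\an, n}_{X'}$ is closed, then $[\omega] \in H^n(X'; \C)$ is monodromy-invariant, i.e.\ $M[\omega] = [\omega]$, so $N[\omega] = 0$ and the twisting exponential acts as the identity, giving $\tilde\sigma = \sigma$. Under the identification of sections of $\cE$ with families of fibre cohomology classes, the section $\sigma$ attached to $[\omega]$ is precisely $t \mapsto [\omega|_{X_t}] \in H^n_{deR}(X_t; \C)$, yielding the stated formula for the composition $H^n_{deR}(X';\C) \to \cG_0$.
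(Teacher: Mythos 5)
The paper offers no proof of this lemma---it is quoted from Section 4 of Hertling's paper---and your argument is exactly the standard construction given there: identify $H^n(X_\infty;\C)_1$ with the multivalued flat sections having unipotent monodromy, twist by $\exp(-\tfrac{\log t}{2\pi i}N)$ to get single-valued ``elementary sections,'' check $t\partial_t\tilde\sigma=-\tfrac{N}{2\pi i}\tilde\sigma$ to land in $C_0$, and invert; the de Rham compatibility for monodromy-invariant classes is then immediate. One small simplification: since $\cG_0$ is by definition the stalk at $0$ of $i_*\cE$, i.e.\ germs of sections on punctured discs, the step you flag as the main obstacle (holomorphic extension of $\tilde\sigma$ across $0$, via regularity of the Gau\ss-Manin connection) is not actually needed to define $\psi_0$ or to prove the lemma---the explicit inverse $g\mapsto\exp(-\log t\cdot t\partial_t)g$ on $C_0$ already gives surjectivity.
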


\subsection{The Brieskorn lattice}
\label{brieskorn}

In order to describe the PMHS on $H^n(X_\infty; \C)_1$, we will need to exploit some additional structure on the Gau\ss-Manin connection $\cG_0$: namely, an embedding of the \emph{Brieskorn lattice} 
$$
H_0'' := \frac{\Omega^{\an, n+1}_{X,0}}{df \smsh d \Omega^{\an, n-1}_{X,0}}
$$
in $\cG_0$, where $\Omega^{\an, j}_{X,0}$ is the stalk of $\Omega^{\an, j}_X$ at the origin. Our reference here is once again \cite[Section 4]{Hertling}. 

There is an injective map
\begin{equation} \label{E913a}
s_0: H_0'' \to \cG_0
\end{equation} 
defined by the formula
$$
s_0([\o]) = \left( t \mapsto \left[ \frac{\o}{df}|_{X_t}\right] \in H^n(X_t; \C) :  0 < |t| \ll 1\right) \in \cG_0,
$$
where the $\frac{\o}{df}$ is the Gelfand-Leray form of $\o$ (\cite[Section 7.1]{AGZV}). Here is a more precise definition of $s_0$: for any $\omega \in \Omega^{\an, n+1}_{X,0}$, there exists $N \gg 0$ such that $f^N \omega = df \smsh \beta$ for some $\beta \in \Omega^{\an, n}_{X,0}$. Choose an open neighborhood $V$ of the origin such that $\beta$ extends to an element of $\Gamma(V, \Omega^{\an, n})$. The element
$s_0([\omega])$ of $\cG_0$ is represented by the function
$$
t \mapsto [t^{-N} \beta|_{X_t}] \in H^n_{deR}(X_t; \C) \cong H^n(X_t; \C), \text{ for $0 < |t| \ll 1$}.  
$$
In order for this to make sense,  we need $X = X_{\e, \eta} \subseteq V$, but by using Remark \ref{E913c} we may assume $\e$ is small enough so that this holds. 

\begin{rem}
\label{lattice}
The image of $s_0$ is a ``lattice'' of $\cG_0$ in the sense that it is a free $\C\{t\}$-module of finite rank $\mu$, and 
$\im(s_0)[1/t] = \cG_0$.
\end{rem}

Equipping $H_0''$ with a $\C\{t\}$-action by letting $t$ act via multiplication by $f$ on $\Omega^{\an, n+1}_{X,0}$ makes $s_0$ a $\C\{t\}$-linear map. Define also a $\C$-linear endomorphism $\del^{-1}_t$ of $H_0''$ by
\begin{equation} \label{E913n}
\del_t^{-1}([\omega]) = [df \smsh \nu], \text{where $d \nu = \omega$}
\end{equation}
(using that the map $d: \Omega^{\an, n}_{X,0} \to \Omega^{\an, n+1}_{X,0}$ is surjective).
This makes $H_0''$ a module over the ring
$\C\{t\}\langle \del_t^{-1} \rangle$ defined by the relation $t \del_t^{-1} = \del_t^{-2} + \del_t^{-1} t$. The reason for the notation ``$\del_t^{-1}$" is that the  operator $\del_t$ acts invertibly on the image of $s_0$, and $s_0$ is a $\C\{t\}\langle \del_t^{-1} \rangle$-linear map. To explain this, we must introduce some more notation. 

Recall the subspaces $C_\a \subseteq \cG_0$ from (\ref{E129}).
Since $\del_t t = \id + t \del_t$, multiplication by $t$ induces a map
$t: C_\a \to C_{\a + 1}$ for each $\a$, and this map is an isomorphism. Similarly, multiplication by $\del_t$ induces a map $\del_t: C_{\a+1} \to C_\a$, and it is an isomorphism for all $\a \ne -1$. For all $\beta \in \mathbb{R}$, we define $\C\{t\}$-submodules
$$
V^{> \beta}  = \sum_{\b < \a} \C\{t\} C_\a  \and V^{\beta}  = \sum_{ \b \leq \a} \C\{t\} C_\a  
$$ 
of $\cG_0$. We are particularly interested in $\V$.
Upon restricting the indexing to $-1 < \a \leq 0$ in the definition of $\V$, we have an internal direct sum decomposition
$$
\V = \bigoplus_{-1 < \a \leq 0} \C\{t\} C_\a.
$$
Notice that $\del_t$ induces an isomorphism
$$
\del_t: V^{>0} \xra{\cong} \V;
$$
we equip $\V$ with the structure of a $\C\{t\}\langle \del_t^{-1} \rangle$-module by defining 
$$
\del_t^{-1}: \V \to \V
$$
to be the composition of the inverse of the isomorphism $\del_t: V^{>0} \xra{\cong} \V$ with the inclusion $V^{>0} \subseteq \V$. 

\begin{lem}[\cite{Hertling} Section 4] \label{lem914} 
The image of $s_0: H_0'' \to \cG_0$ is contained in $\V$, and the map $s_0: H_0'' \to \V$ is $\C\{t\}\langle \del_t^{-1} \rangle$-linear.
\end{lem}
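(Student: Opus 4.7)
The plan is to prove the two assertions separately. The first is the substantive content of the lemma and is essentially Malgrange's regularity theorem for the Gau\ss-Manin connection, which I would cite rather than reprove; the second then follows by direct computation from the definitions of $s_0$ and the actions involved.

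For the containment $s_0(H_0'') \subseteq \V$, I would appeal to Malgrange's theorem, which identifies $s_0(H_0'')$ as a free $\C\{t\}$-lattice of rank $\mu$ inside $\V \subseteq \cG_0$. The input is that $\cG_0$ is a regular holonomic $D$-module with singular point only at $t=0$, so its $V$-filtration is canonical and exhaustive, and the Brieskorn lattice sits at level $>-1$. The exposition in Section 4 of \cite{Hertling} provides exactly this, and I would quote it.

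For $\C\{t\}$-linearity of $s_0$, I would compute directly: the $t$-action on $H_0''$ is multiplication by $f$, while on $\cG_0$ it is multiplication by the base coordinate $t$. Since $f \equiv t$ along the fiber $X_t$, the Gelfand-Leray formula yields $s_0([f\omega]) = t \cdot s_0([\omega])$.

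For $\del_t^{-1}$-linearity, fix $[\omega] \in H_0''$ and choose $\nu \in \Omega^{\an, n}_{X,0}$ with $d\nu = \omega$, so that by definition $\del_t^{-1}[\omega] = [df \wedge \nu]$ in $H_0''$. Then $s_0([df \wedge \nu])$ is represented by the family $t \mapsto [\nu|_{X_t}]$, directly from the Gelfand-Leray formula applied with $N = 0$ and $\beta = \nu$. Using the standard description of the Gau\ss-Manin connection on restrictions of ambient forms (namely, $\del_t[\alpha|_{X_t}] = [d\alpha/df|_{X_t}]$, extended via the $f^N$-trick to handle the critical point), I would check that $\del_t [\nu|_{X_t}] = [\omega/df|_{X_t}] = s_0([\omega])$. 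Combined with the fact that $\del_t : V^{>0} \to \V$ is the isomorphism whose inverse defines $\del_t^{-1}$ on $\V$, this reduces the claim $\del_t^{-1}s_0([\omega]) = s_0([df \wedge \nu])$ to the assertion $s_0([df \wedge \nu]) \in V^{>0}$. The latter is a refinement of Malgrange's theorem: since $\del_t^{-1}$ raises the $V$-filtration by one and $[df \wedge \nu]$ lies in the image of $\del_t^{-1}$ on $H_0''$, its image under $s_0$ must sit in $V^{>0} = t \V$. The main obstacle is the regularity input, but granting the standard citation, the rest amounts to Gelfand-Leray bookkeeping.
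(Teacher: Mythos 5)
The paper gives no proof of this lemma at all: it is stated with the attribution ``\cite{Hertling} Section 4'' and used as a black box, so there is nothing in the text for your argument to diverge from. Your reconstruction is essentially the standard one. Citing Malgrange's regularity theorem for the containment $s_0(H_0'')\subseteq \V$ is exactly the right move (this is the only substantive input), and your verifications of $\C\{t\}$-linearity via $f|_{X_t}=t$ and of the identity $\del_t\, s_0([df\smsh\nu]) = s_0([d\nu])$ via the Gelfand--Leray formula are correct.

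The one soft spot is your justification that $s_0([df\smsh\nu])\in V^{>0}$, which you need in order to convert $\del_t\, s_0(\del_t^{-1}[\omega])=s_0([\omega])$ into $\del_t^{-1}s_0([\omega])=s_0(\del_t^{-1}[\omega])$. As written (``since $\del_t^{-1}$ raises the $V$-filtration by one and $[df\smsh\nu]$ lies in the image of $\del_t^{-1}$ on $H_0''$, its image under $s_0$ must sit in $V^{>0}$''), the argument presupposes that $s_0$ already intertwines $\del_t^{-1}$ with the antidifferentiation on $\cG_0$ — which is the very statement being proved. Moreover, knowing only that $\xi\in\V$ and $\del_t\xi\in\V$ does not by itself force $\xi\in V^{>0}$, since $\del_t$ need not be injective on the $C_0$-component. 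The correct non-circular input is the companion asymptotic statement to Malgrange's theorem: the lattice of geometric sections $t\mapsto[\nu|_{X_t}]$ (often denoted $H_0'$, which is exactly the image of $s_0\circ\del_t^{-1}$) is contained in $V^{>0}$, because periods of a holomorphic ambient $n$-form over vanishing cycles tend to $0$ as $t\to 0$ and hence have asymptotic expansions in $t^\alpha(\log t)^k$ with $\alpha>0$. With that citation in place of the circular sentence, your argument is complete and matches the treatment in the cited source.
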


\begin{rem} \label{rem914}
In fact, the lemma determines the operator $\del_t$ on $\cG_0$ completely. Since
$s_0$ induces an isomorphism
$$
H_0''[1/t] \xra{\cong} \cG_0
$$
(Remark \ref{lattice}), for any element $\b \in \cG_0$ we have $t^N \b = s_0(\a)$ for $N \gg 0$ and some $\a \in H_0''$. As discussed above in the definition of the map $s_0$, for any $\a \in H_0''$, it is known that
$t^M \a = \del_t^{-1} \g$ for $M \gg 0$ and some
$\g \in H_0''$. It follows that 
$$
t^L \b = s_0(\del_t^{-1} \gamma) \text{ for $L \gg 0$ and some $\g \in H_0''$.} 
$$
By the lemma, we get $s_0(\gamma) = \del_t s_0(\del_t^{-1} \gamma) = \del_t(t^L \b) = L t^{L-1} \b + t^L \del_t(\b)$,
and hence
$$
\del_t(\b) = t^{-L} s_0(\g) - L t^{-1} \b.
$$
\end{rem}

\subsection{Steenbrink's polarized mixed Hodge structure}
\label{PMHS}

We now describe Steenbrink's PMHS of level $n+1$ on $H^n(X_\infty; \Z)_1$, following Sections 3 and 4 of \cite{Hertling}. As discussed in Appendix \ref{appendix}, we must specify 
\begin{itemize}
\item an endomorphism $N$ of $H^n(X_\infty; \Q)_1$ such that $N^{n+2} = 0$,
\item a decreasing filtration $F^\bu$ on $H_\C$, and
\item a symmetric $\Q$-bilinear form
$$
S : H_\Q \otimes_\Q H_\Q \to \Q.
$$
\end{itemize}

The map $N$ is $-\on{Nilp(M)}$, where $\on{Nilp}(M)$ denotes the nilpotent part of the rational monodromy operator $M \otimes \Q$ on $H^n(X_\infty; \Q)$; the coefficient of $-1$ on $\on{Nilp}(M)$ doesn't appear in \cite{Hertling}, but this is due to an error which the author notes in \cite[Remarks 10.25]{hertlingbook}. The weight filtration $W_\bu$ on $H^n(X_\infty;\Q)$ is induced from $N$, as described
in Appendix \ref{appendix}. 
In particular, 
\begin{equation} \label{E914}
\im\left(H^n(X' ;\Q) \to H^n(X_\infty ; \Q)\right) = \ker(N) \subseteq W_{n+1} H^n(X_\infty ; \Q)_1.
\end{equation} 

The pairing
$$
S: H^n(X_\infty ; \Q)_1 \otimes_\Q H^n(X_\infty ; \Q)_1 \to \Q
$$
is described in (\cite[page 13]{Hertling}). It is non-degenerate (\cite[page 13]{Hertling}) and therefore induces an isomorphism
$$
\sigma : H^n(X_\infty ; \Q)_1 \xra{\cong} H_n(X_\infty ; \Q)_1.
$$
A formula for $S$, in terms of $\sigma$, is given as follows (\cite[page 14]{Hertling}):\begin{equation} \label{E914d}
S(a,b) = (-1)^{n(n-1)/2}  \sum_{m \geq 1} \ell(\frac{1}{m!} \sigma (N^{m-1}a), \sigma(b)),
\end{equation}
where $\ell$ is the Seifert pairing (\cite[page 40]{AGZV}).

\begin{rem}
 \label{E914g}
If $a \in H^n(X_\infty; \Q)$ is fixed by $M$, we have
$$
S(a,b) = (-1)^{n(n-1)/2}  \ell(\sigma(a), \sigma(b)).
$$
\end{rem}

To describe the filtration $F^\bu$ on $H^n(X_\infty; \C)_1$, we apply the discussions in \ref{GM} and \ref{brieskorn}:
$$
F^q H^n(X_\infty; \C)_1 = \psi^{-1}_0 \left( \frac{V^0 \cap \del_t^{n-q} s_0(H_0'') + V^{>0}}{V^{>0}}  \right).
$$
This description of $F^\bu$ in terms of the Gau\ss-Manin connection is due to work of Pham \cite{pham}, M. Saito \cite{saito}, Scherk-Steenbrink \cite{SS}, and Varchenko \cite{varchenko}.
\begin{rem}
\label{E914b}
Since $\im(\psi_0) \subseteq V^0$, we conclude that,
if $z \in H^n(X_\infty,\C)_1$ satisfies $\psi_0(z) \in \del_t^{n-q} \cdot s_0(H_0'')$, then $z \in
F^q H^n(X_\infty,\C)_1$.
\end{rem}

\begin{rem}
The isomorphism in Remark \ref{E913c} is an isomorphism of PMHS's. 
\end{rem}

\section{The map $ch_{X_\infty}$ and its properties} \label{sec:ch}

Throughout this section, we adopt the following notations and assumptions:

\begin{assumptions} \label{ass919}
Assume that 
  \begin{enumerate}
\item $n$ is an odd positive integer;
\item $Q = \C[x_0, \dots, x_n][1/h]$ for some $h \notin (x_0, \dots, x_n)$;
\item $f$ is an element of $\fm := (x_0, \dots, x_n) \cdot Q$ 
such that  the only singularity of the associated morphism $f: \Spec(Q) \to \A^1_\C$  of smooth affine varieties
is at $\fm$; and
\item $X = X_{\e_0, \eta_0}$, $X' = X'_{\e_0, \eta_0}$, etc., are defined from $f$  as in \ref{milnorfiber}
  for  sufficiently small parameters $0 < \eta_0 \ll \e_0 \ll 1$.  
\end{enumerate}
\end{assumptions}

\def\tQ{\widetilde{Q}}
\def\tfm{\widetilde{\fm}}

The goal of this section is to define a Chern-character-type map
$$
ch_{X_\infty}: K_0(mf(Q,f)) \to H^n(X_\infty; \Q)_1
$$
that satisfies  certain key properties; see Corollary \ref{cor913}. To define this map, we will utilize the henselization of $Q$ at $\fm$.
We start by recalling the relevant definitions.

Define $\C[[x_0, \dots, x_n]]^{alg}$ to be the collection of {\em algebraic power series} in $x_0, \dots, x_n$:
$$
\C[[x_0, \dots, x_n]]^{alg} = \{P \in \C[[x_0, \dots, x_n]] \mid g(P) = 0, \text{ for some $0 \ne g(t) \in \C[x_0, \dots, x_n][t]$} \}.
$$
Then the following properties hold (see, for instance, \cite[Lemma 2.29]{rond}):
\begin{enumerate}
\item There are inclusions
  $$
  Q \subseteq \C[[x_0, \dots, x_n]]^{alg} \subseteq \cO_{X,0}^{an}.
  $$
\item $\C[[x_0, \dots, x_n]]^{alg}$ is a hensel, regular local ring and its algebraic completion is $\C[[x_0, \dots, x_n]]$.
\item $\C[[x_0, \dots, x_n]]^{alg}$ is a filtered union of sub-$\C$-algebras $\tQ$ that are \etale extensions of $Q$. 
\end{enumerate}
The latter two properties listed amount to the fact that
$\C[[x_0, \dots, x_n]]^{alg}$ is the henselization of $Q$ at $\fm$ (or, equivalently, the henselization of $\C[x_0, \dots, x_n]$ at $(x_0, \dots, x_n)$).
For brevity, we write $Q^h = \C[[x_0, \dots, x_n]]^{alg}$ from now on.

We may describe $Q^h$ in more geometric language as follows:  Set $U = \Spec(Q)$, and let $u \in U$ be the closed point determined by $\fm$. 
Then $\Spec(Q^h)$ is isomorphic to the filtered limit
$$
U^h = \lim_{(V, v) \to (U,u)} V
$$
indexed by all pointed \etale neighborhoods $p: (V,v) \to (U,u)$.

To relate these two constructions, suppose we are given a pointed \etale neighborhood $p: (V,v) \to (U,u)$, and let $p_\C : V(\C) \to U(\C)$ 
denote the induced map on complex points. Then we may find
a pair $\e, \eta$ such that
$X_{\e, \eta}$ is contained in $X \cap \im(p_\C)$ (since $p_\C$ is an open mapping). The inclusion of $X_{\e, \eta}$ into $U(\C)$ then factors as 
\begin{equation} \label{E510}
X_{\e, \eta} \xra{\iota} V(\C) \xra{p_\C} U(\C),
\end{equation}
for a unique open inclusion $\iota$ of complex manifolds. 
Taking colimits of rings of functions realizes the inclusion
$Q^h \subseteq \cO_{X,0}^{an}$ above.

Put differently, suppose $Q \subseteq \tQ$ is an \etale extension of $Q$ that is contained in $Q^h \subseteq \cO_{X,0}^{an}$.
Since $\tQ$ is a finitely generated $\C$-algebra, and each generator converges on an open neighborhood of the origin, 
there exist sufficiently small $\e, \eta$ such that each element of $\tQ$ converges absolutely on $X_{\e, \eta}$. 
This induces the open inclusion $\iota: X_{\e, \eta} \into V(\C)$ above.

We next recall the classical Chern character map for $K_1$. 
For an essentially smooth $\C$-algebra $R$ and each $j \ge 0$, there is a map
$$
ch_1^{2j}: K_1(R) \to H^{2j-1}_{deR}(V(\C); \C),
$$
where $V = \Spec(R)$, 
that sends the class of an invertible  matrix $Y$ to the class of the $(2j-1)$-form 
$$
\frac{-1}{(2\pi i)^j}\frac{(j-1)!}{(2j-1)!} \tr(Y^{-1} dY (d(Y^{-1}) dY)^{j-1}) \in \Gamma(V, \Omega^{\an, 2j-1}).
$$
(Recall that, since $V(\C)$ is a Stein manifold, we may identify its de Rham cohomology with the cohomology of the complex
$(\Gamma(V(\C), \Omega^{\an, \bullet}), d)$.) 

Using $d(Y^{-1}) = - Y^{-1} (dY) Y^{-1}$, we can also write this as
$$
ch_1^{2j}([Y]) =
\frac{-(-1)^{j-1}}{(2\pi i)^j} \frac{(j-1)!}{(2j-1)!} \tr((Y^{-1} dY)^{2j-1}).
$$
The factor of $\frac{1}{(2 \pi i)^j}$ in this formula (which is not included by some authors) ensures that the image 
of the composition 
$$
K_1(R) \xra{ch_1^{2j}} H^{2j-1}_{deR}(V(\C); \C) \cong H^{2j-1}(V(\C); \C)
$$
lies in $H^{2j-1}(V(\C); \Q)$; see \cite[Section 2]{Pekonen} for a proof. Abusing notation a bit, we write $ch^{2j}_1$ also for the induced map
$$
ch_1^{2j}: K_1(R) \to H^{2j-1}(V(\C); \Q).
$$

Given an \etale extension $Q \subseteq \tQ$ of $Q$ inside $Q^h$, we proceed to define  a map
$$
\phi_{\tQ}: K_1(\tQ[1/f]) \to  H^n(X_\infty ; \Q)_1
$$
as follows. Let $V = \Spec(\tQ)$, a smooth affine complex variety,  and let $V'= \Spec(\tQ[1/f])$, an open subvariety of $V$. 
As noted above, the inclusion $\tQ \subseteq \cO_{X,0}^{an}$ determines an open inclusion $\iota: X_{\e, \eta} \into V(\C)$ of
complex manifolds for $\e, \eta$ sufficiently small. We
define $\phi_{\tQ}$ to be the composition of 
$$
K_1(\tQ[1/f])  \xra{ch^{n+1}_1} H^n(V'(\C); \Q) \xra{\iota^*}
H^n(X'_{\e, \eta} ; \Q) \xra{} H^n(X^{\e,\eta}_\infty ; \Q)_1 \xra{\cong} H^n(X_\infty ; \Q)_1, 
$$
where the penultimate map is pull-back along the canonical covering space map $X^{\e, \eta}_\infty \onto X'_{\e, \eta}$ and the final map
is the inverse of the isomorphism  $H^n(X_\infty ; \Q)_1 \xra{\cong}  H^n(X^{\e,\eta}_\infty ; \Q)_1$
given by  pull-back along the inclusion
$X^{\e, \eta}_\infty \subseteq X_\infty$.  It is clear from the construction that the map $\phi_{\tQ}$ is independent of the choice of $\e, \eta$.

\begin{prop} \label{prop915}
With the notation and assumptions listed above, there is a unique homomorphism of abelian groups
$$
ch_{X_\infty}: K^\Delta_0([mf(Q^h, f)]) \to H^n(X_\infty; \Q)_1
$$
such that the following property holds:
Given an \etale extension $Q \subseteq \tQ$, with $\tQ \subseteq Q^h$, and given a 
free matrix factorization $(A, B) \in mf(\tQ, f)$, we have
$$
ch_{X_\infty}([(A,B)_h]) =  \phi_{\tQ}([A]),
$$
where $(A,B)_h$ denotes the image of $(A,B)$ under the canonical map $mf(\tQ, f) \to mf(Q^h, f)$, and $[A]$ is the class in 
$K_1(\tQ[1/f])$  given by regarding $A$ as an invertible matrix with entries in $\tQ[1/f]$. 
\end{prop}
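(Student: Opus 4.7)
The plan is to reduce the problem to independence of choices on free matrix factorizations, then verify the $K_0^\Delta$ relations. Since $Q^h$ is a local ring, every finitely generated projective $Q^h$-module is free, so every object of $mf(Q^h, f)$ is a free matrix factorization. Writing $Q^h$ as the filtered union of étale extensions $\tQ$ of $Q$, any such free matrix factorization has finitely many entries all lying in some common $\tQ$, and so arises as $(A, B)_h$ for $(A, B) \in mf(\tQ, f)$. This shows the prescribed formula determines $ch_{X_\infty}$ on a generating set, giving uniqueness.

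For existence, I need to verify that the assignment $[(A,B)_h] \mapsto \phi_{\tQ}([A])$ is well-defined on $K_0^\Delta([mf(Q^h,f)])$. Independence of the auxiliary choice of $\tQ$ follows from the naturality of $ch_1^{n+1}$ and of pullback to $X_{\e, \eta}$ under enlargements $\tQ_1 \subseteq \tQ_2$. Independence of the basis chosen to represent the underlying free module is the crux of the argument: a basis change replaces $A$ by $g_0 A g_1^{-1}$ with $g_0, g_1 \in GL(\tQ)$, contributing an error $[g_0] - [g_1]$ coming from the image of $K_1(\tQ) \to K_1(\tQ[1/f])$. The Chern character of this error extends globally to a class in $H^n(\Spec(\tQ)(\C); \Q)$, whose restriction to $X_{\e, \eta}$ factors through $H^n(X_{\e, \eta}; \Q) = 0$ since $X_{\e, \eta}$ is contractible for sufficiently small $\e, \eta$ (it deformation retracts onto the special fiber $f^{-1}(0) \cap B_\e$, which itself cones down to the origin).

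For the $K_0^\Delta$ relations, every distinguished triangle arises from a mapping cone of a morphism $\alpha: (A_1, B_1) \to (A_2, B_2)$ of free matrix factorizations. Up to signs (which are $2$-torsion and hence trivial in rational cohomology), the $A$-component of the cone is block triangular with diagonal blocks $B_1$ and $A_2$, so $[A_{\mathrm{cone}}] \equiv [B_1] + [A_2]$ in $K_1(\tQ[1/f])$. Combined with the identity $[A_1] + [B_1] = [A_1 B_1] = [fI]$, additivity on triangles reduces to the single vanishing $\phi_{\tQ}([fI]) = 0$; the same vanishing takes care of the trivial factorizations $(1, f)$ and $(f, 1)$, which are null-homotopic. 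Now $ch_1^{n+1}([fI])$ is proportional to $\mathrm{rank}(I) \cdot (df/f)^n$ as a form on $V'(\C)$: when $n \geq 3$ the wedge power vanishes identically since $df/f$ is a single $1$-form, and when $n = 1$ the pullback of $df/f$ along $X_\infty \to X'_{\e, \eta} \subseteq V'(\C)$ is $d(\log f_\infty)$ for a globally defined branch of $\log f$ on the universal cover $X_\infty$, hence is exact. Finally, the image automatically lies in $H^n(X_\infty; \Q)_1$ by (\ref{E914}), which identifies classes pulled back from $X'$ with monodromy invariants inside $H^n(X_\infty; \Q)_1$.

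The main obstacle is the basis independence: the matrix $A$ extracted from a free matrix factorization is only well-defined up to a two-sided $GL(\tQ) \times GL(\tQ)$ action, and some geometric mechanism must absorb this ambiguity before we get a well-defined class in $H^n(X_\infty; \Q)_1$. The contractibility of the small neighborhood $X_{\e, \eta}$ is precisely this mechanism; without it, basis changes would produce genuine cohomological noise, and the construction would fail.
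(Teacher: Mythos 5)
Your argument is correct in substance, but it takes a genuinely different route from the paper's. The paper does not verify the Grothendieck-group relations by hand: it invokes the localization sequence in algebraic $K$-theory, $K_1(Q^h) \to K_1(Q^h[1/f]) \xra{\partial} G_0(Q^h/f) \to 0$ (right-exact because $Q^h$ is local), with $\partial([A]) = [\coker A]$, and composes $\partial$ with the surjection $G_0(Q^h/f) \to K_0^\Delta([mf(Q^h,f)])$ supplied by Buchweitz's equivalence, whose kernel is generated by $[Q^h/f] = \partial([f])$. This presents $K_0^\Delta([mf(Q^h,f)])$ as the quotient of $K_1(Q^h[1/f])$ by the subgroup generated by $[f]$ and the image of $K_1(Q^h)$, so existence reduces to checking that $\phi_h$ kills exactly those two classes of elements --- which are precisely the two vanishing statements you prove, only deployed differently: you use the $K_1(\tQ)$-vanishing (via contractibility of $X_{\e,\eta}$, where the paper cites $H^n(B_\e;\Q)=0$) for basis-independence, and the $[f]$-vanishing for additivity on cones. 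Your route is more elementary, avoiding the localization theorem, but it has one soft spot you should make explicit: a distinguished triangle in $[mf(Q^h,f)]$ is only \emph{isomorphic} to a strict mapping-cone triangle, so besides cone-additivity you need $\phi$ to be constant on homotopy-equivalence classes of objects, not merely invariant under basis change of a fixed underlying module. Homotopy-equivalent matrix factorizations over $Q^h$ need not be strictly isomorphic; by the structure theory over a local ring they become so after adding trivial summands $(1,f)$ and $(f,1)$, so your vanishing $\phi_{\tQ}([f I])=0$ does close this gap, but the reduction should be stated. The paper's presentation-based approach sidesteps this entirely, and moreover yields as a byproduct the surjectivity of $K_1(Q^h[1/f]) \onto K_0^\Delta([mf(Q^h,f)])$, which is reused later (e.g., to represent arbitrary classes by free matrix factorizations in the proof of Theorem \ref{BvS}).
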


\begin{proof} 
  Given two \etale extension $Q \subseteq \tQ_1$ and $Q \subseteq \tQ_2$ inside $Q^h$ such that  $\tQ_1 \subseteq \tQ_2$,
  the composition of
$$
K_1(\tQ_1[1/f]) \to K_1(\tQ_2[1/f]) \xra{\phi_{\tQ_2}}  H^n(X_\infty ; \Q)_1
$$
coincides with $\phi_{\tQ_1}$. We thus obtain an induced map
$$
\colim K_1(\tQ[1/f]) \to H^n(X_\infty ; \Q)_1,
$$
where the colimit is indexed by all such \etale extensions. Since $K$-theory commutes with filtered colimits of rings, and
$\colim \tQ = Q^h$, which gives that $\colim \tQ[1/f] = Q^h[1/f]$, we obtain a map
$$
\phi_h: K_1(Q^h[1/f]) \to H^n(X_\infty, \Q)_1.
$$
The map $\phi_h$ is uniquely determined by the following property: for each \etale extension $Q \subseteq \tQ \subseteq Q^h$, the composition
$$
K_1(\tQ[1/f]) \to K_1(Q^h[1/f]) \xra{\phi_h} H^n(X_\infty, \Q)_1
$$
coincides with the map $\phi_{\tQ}$.

By \cite[Theorem 3.2]{weibelK}, we have an exact sequence
$$
K_1(Q^h) \to K_1(Q^h[1/f]) \xra{\del} G_0(Q^h/f) \to K_0(Q^h) \to K_0(Q^h[1/f]) \to 0
$$
such that, for any matrix factorization $(A, B) \in mf(Q^h, f)$, $\del([A]) = [\coker(A)]$. Since $Q^h$ is local, the last map is an isomorphism,
and so we obtain the right exact sequence
$$
K_1(Q^h) \to K_1(Q^h[1/f]) \xra{\del} G_0(Q^h/f) \to 0.
$$
Combining the canonical map $D^b(Q^h/f) \to D^b(Q^h/f) / \Perf(Q^h/f)$ with the quasi-equivalence 
$$
mf(Q^h, f) \xra{\simeq} D^b(Q^h/f) / \Perf(Q^h/f) 
$$
discussed in Section \ref{mf}, we obtain a map 
\begin{equation}
\label{G0K0}
G_0(Q^h/f) \to K_0^\Delta[(mf(Q^h, f))]
\end{equation}
whose kernel is generated by  $[Q^h/f]$.
We have $[Q^h/f] = \del([f])$, where $[f] \in K_1(Q^h[1/f]))$ is the class of $f$ regarded as a $1 \times 1$ invertible matrix.  
From this, we obtain a surjection
$$
\pi: K_1(Q^h[1/f]) \onto K^\Delta_0([mf(Q^h,f)])
$$
such that 
\begin{itemize}
\item if $(A,B) \in mf(Q^h, f)$, then $\pi([A]) =
[(A,B)]$, and
\item 
the kernel of $\pi$ is generated by $[f]$ and 
the image of
$(Q^{h})^\times \cong K_1(Q^h) \to K_1(Q^h[1/f])$.
\end{itemize}

We claim that $\phi_{h}$ annihilates the kernel of $\pi$.
For the generator $[f]$, this is obvious when $n \geq 3$, since   
$$
ch_1^{n+1}([f]) = \frac{-1}{(2 \pi i)^{p}} \frac{(p-1)!}{(2p -1)!}f^{-1}df (df^{-1} df)^{\frac{n-1}{2}} = 0.
$$ 
When $n = 1$, observe that, for any pair $(\epsilon, \eta)$ and $t \in T'_{ \eta}$, the restriction of the class
$f^{-1}df \in H^1_{deR}(X'_{\epsilon, \eta}; \C)$ to $H^1(X_{\epsilon, \eta} \cap f^{-1}(t) ; \C)$ is $0$, and hence $\phi_{h}([f]) = 0$.   
To show $\phi_h$ annihilates the image of $K_1(Q^h) \to K_1(Q^h[1/f])$, it suffices to prove it annihilates the image of
the composition $K_1(\tQ) \to K_1(Q^h) \to  K_1(Q^h[1/f])$ 
for each \etale extension $Q \subseteq \tQ \subseteq Q^h$.
This holds since  the composition of
$$
K_1(\tQ) \to K_1(\tQ[1/f]) \xra{ch_1^{n+1}} H^n(X'; \Q)
$$
factors through $H^n(B_\epsilon; \Q) = 0$, for $\e$ sufficiently small,
by the naturality of the Chern character.

It follows that $\phi_h$ factors through $\pi$ and induces  the map we seek:
$$
ch_{X_\infty}: K^\Delta_0([mf(Q^h, f)]) \to H^n(X_\infty; \Q)_1.
$$
This  map has the desired property by construction, and its uniqueness holds since every object in
$mf(Q^h, f)$ is of the form $(A, B)_h$ for some $\tQ$, $A$, and $B$ as above.
\end{proof}

\begin{thm} \label{thm915} Using Assumptions \ref{ass919},  set $p:=\frac{n+1}{2}$. Then the map $ch_{X_\infty}$
  defined in Proposition \ref{prop915} enjoys the following properties:
\begin{enumerate}
\item 
  For
  $(A,B) \in mf(Q^h, f)$, we have 
$$
\psi_0(ch_{X_\infty}(A,B)) =   \frac{1}{(2 \pi i)^{p}}\del_t^{p-1}s_0 \left(\frac{2 \tr((dA dB)^p)}{(n+1)!} \right),
$$
where $\psi_0$,  $\del_t$, and $s_0$ are as defined in Sections \ref{GM} and \ref{brieskorn}, and $dA, dB$ are
viewed as matrices with entries in $\Omega^{an, 1}_{X,0}$.

\item Using the notation of Section \ref{PMHS}, we have
$$
\im(\ch_{X_\infty}) \subseteq
\ker(N) \cap F^p H^n(X_\infty; \C)_1.
$$
In particular, $\im(\ch_{X_\infty}) \subseteq W_{n+1} H^n(X_\infty; \Q)_1$. 
\end{enumerate}
\end{thm}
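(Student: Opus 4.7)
My plan is to prove part~(1) by a direct computation in the Gau\ss-Manin connection and then to derive part~(2) as a formal consequence of part~(1) together with the earlier structural results (\ref{E914}) and Remark~\ref{E914b}.

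For part~(1), I would lift a given matrix factorization $(A,B)\in mf(Q^h, f)$ to a free matrix factorization in $mf(\widetilde Q, f)$ for an \'etale extension $Q \subseteq \widetilde Q \subseteq Q^h$. By Proposition~\ref{prop915} and Lemma~\ref{lem913}, $\psi_0(ch_{X_\infty}(A,B))$ is represented in $\cG_0$ by the germ
$$t \;\mapsto\; \tfrac{-(-1)^{p-1}}{(2\pi i)^p}\,\tfrac{(p-1)!}{n!}\,\bigl[\tr\bigl((A^{-1}dA)^n\bigr)\big|_{X_t}\bigr]\in H^n(X_t;\C),\qquad 0<|t|\ll 1.$$
I would then set $\eta := dA\cdot A^{-1}$, a matrix $1$-form on $X'=\{f\ne 0\}$. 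Graded cyclicity of the trace gives $\tr((A^{-1}dA)^n)=\tr(\eta^n)$ and $\tr(\eta^{2k})=0$ for $k\ge 1$. Differentiating $AB = BA = fI$ yields the clean identity
$$dA\cdot dB \;=\; \eta\,df \;-\; f\,\eta^2\qquad\text{on }X'.$$
Expanding $(\eta\,df - f\eta^2)^p$ and using the sign rules $\eta\cdot df = -df\cdot\eta$ and $\eta^2\cdot df = df\cdot\eta^2$, the only terms that survive under trace are those with exactly one factor of $df$; an induction on $p$ yields
$$\tr\bigl((dA\cdot dB)^p\bigr) \;=\; (-1)^p\,p\,f^{p-1}\,df\wedge\tr(\eta^n).$$

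Multiplying by $f^p$ converts this into an identity in $\Omega^{n+1,\an}_{X,0}$ (clearing the poles of $\eta$ via $f\cdot\eta = dA\cdot B$), and the Gelfand--Leray definition of $s_0$ then gives
$$s_0\bigl([\tr((dA\cdot dB)^p)]\bigr)(t) \;=\; (-1)^p\,p\,t^{p-1}\,[\tr(\eta^n)|_{X_t}] \in \cG_0.$$
Because $\tr(\eta^n)$ is a fixed closed $n$-form on $X'$, its restrictions to the fibers $X_t$ define a horizontal section for the Gau\ss-Manin connection (standard Cartan homotopy argument); hence $\partial_t$ annihilates $[\tr(\eta^n)|_{X_t}]$, and $\partial_t^{p-1}\bigl(t^{p-1}\,[\tr(\eta^n)|_{X_t}]\bigr) = (p-1)!\,[\tr(\eta^n)|_{X_t}]$. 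A scalar bookkeeping check using $(n+1)! = 2p\cdot n!$ identifies the resulting coefficient $\tfrac{2(-1)^p p!}{(2\pi i)^p(n+1)!}$ with $\tfrac{(-1)^p(p-1)!}{(2\pi i)^p\,n!}$ coming from the Chern character formula, completing part~(1).

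For part~(2), the inclusion $\im(ch_{X_\infty})\subseteq \ker(N)\cap W_{n+1}H^n(X_\infty;\Q)_1$ is formal: by construction of $\phi_{\widetilde Q}$, $ch_{X_\infty}$ factors through the canonical map $H^n(X';\Q)\to H^n(X_\infty;\Q)_1$, whose image is identified in~(\ref{E914}) with $\ker(N)\subseteq W_{n+1}$. The inclusion $\im(ch_{X_\infty})\subseteq F^p H^n(X_\infty;\C)_1$ is immediate from part~(1): since $n-p=p-1$, one has $\psi_0(ch_{X_\infty}(A,B))\in \partial_t^{p-1}s_0(H''_0)$, and Remark~\ref{E914b} applies. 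The main obstacle is the trace identity of part~(1): once $\eta = dA\cdot A^{-1}$ and $dA\cdot dB = \eta\,df - f\eta^2$ are in hand, everything reduces to showing that $\tr((\eta\,df - f\eta^2)^p)$ collapses to the single term $(-1)^p p\, f^{p-1}\,df\wedge \tr(\eta^n)$, which is a careful but routine combinatorial exercise using the sign conventions for matrix-valued forms and the vanishing of even-power traces.
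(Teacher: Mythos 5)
Your argument is correct and follows essentially the same route as the paper: represent $\psi_0(ch_{X_\infty}(A,B))$ by the Chern form of $[A]$ restricted to the fibers, prove a trace identity writing $\tr((dA\,dB)^p)$ as $df$ wedged with an $n$-form, feed this into the Gelfand--Leray definition of $s_0$, and use $\partial_t$-flatness to apply $\partial_t^{p-1}$ and match coefficients (part (2) then being formal from (\ref{E914}) and Remark~\ref{E914b}, exactly as in the paper). The only, harmless, variations are that your identity $\tr((dA\,dB)^p)=(-1)^p p\, f^{p-1} df\wedge\tr(\eta^n)$ with $\eta=dA\cdot A^{-1}$ packages in one step what the paper does via Lemma~\ref{lem913e} together with the trick $[(B,A)]=-[(A,B)]$, and your Stokes-type flatness argument for $t\mapsto[\tr(\eta^n)|_{X_t}]$ replaces the paper's appeal to the formula $\partial_t\circ\psi_0=-t^{-1}\circ\psi_0\circ N/(2\pi i)$ combined with $\im(ch_{X_\infty})\subseteq\ker(N)$; I checked that your trace identity and the final coefficient bookkeeping ($(n+1)!=2p\cdot n!$) come out right.
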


\begin{rem}
  For any smooth complex variety $Y$, the image of the  map
  $$
  ch_1^{n+1}: K_1(Y) \to H^n(Y; \Q) \subseteq H^n(Y; \C)
  $$
  is contained in $F^p H^n(Y; \C)$, where $F^\bu$ is the Hodge filtration on $H^n(Y; \C)$  defined by Deligne (\cite{deligneII}, \cite{deligneIII}).
  The last statement of part (2) of Theorem \ref{thm915} would thus follow from the assertion that the canonical map
  $$
  H^n(V'; \Q) \to H^n(X_\infty; \Q)_1
  $$
  is a morphism of MHS's, where $V' = \Spec(\tQ[1/f])$ for any \etale extension $Q \subseteq \tQ$ contained in $Q^h$. 
Although this seems likely to be true, we were unable to prove it or find a reference for it,
  and we have opted for a more direct proof of the last statement of part (2).
\end{rem}

The proof of Theorem \ref{thm915} will use the following: 

\begin{lem} \label{lem913e}
Suppose $(A,B)$ is a free matrix factorization of $f$ in $R$, for an essentially  smooth $k$-algebra $R$  and a non-zero-divisor $f \in R$. 
Then, for any positive integer $j$, we have
$$
f \cdot \tr((dAdB)^j) = j df \smsh \tr(A dB (dA dB)^{j-1}) \in \Omega^{2j}_{R/k}.
$$
\end{lem}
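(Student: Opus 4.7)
The plan is to reduce the identity to a computation in $\Omega^{2j}_{R[1/f]/k}$, where $A$ becomes invertible with $A^{-1} = f^{-1}B$. Since $R$ is essentially smooth over $k$, $\Omega^{2j}_{R/k}$ is a projective $R$-module, so $f$ (a non-zero-divisor on $R$) acts injectively on $\Omega^{2j}_{R/k}$, and the localization map $\Omega^{2j}_{R/k} \into \Omega^{2j}_{R[1/f]/k}$ is an inclusion. Thus it suffices to verify the identity after inverting $f$.

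The key step is to introduce the matrix-valued $1$-form $\beta := dA \cdot A^{-1}$. Differentiating $AB = fI$ and using $B = fA^{-1}$ gives $A\,dB = df - f\beta$, and therefore
$$
dA \cdot dB = \beta\,df - f\beta^2.
$$
The crucial observation is that $X := \beta\,df$ and $Y := -f\beta^2$ commute and $X^2 = 0$: indeed, $df \cdot \beta = -\beta\,df$ because both are matrix-valued $1$-forms, and $df \wedge df = 0$. The binomial theorem thus collapses to $(dA\,dB)^j = Y^j + jXY^{j-1}$, and pushing $df$ to the right through the even-degree matrix $\beta^{2j-2}$ yields
$$
(dA\,dB)^j = (-f)^j \beta^{2j} + j(-f)^{j-1}\beta^{2j-1}\,df.
$$
An analogous computation, starting from $A\,dB = df - f\beta$, yields
$$
A\,dB\cdot(dA\,dB)^{j-1} = (-1)^j\bigl[f^j\beta^{2j-1} - jf^{j-1}\beta^{2j-2}\,df\bigr].
$$

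I would then take traces, using the sign rules $\tr(\beta^{2j-1}\,df) = -df \wedge \tr(\beta^{2j-1})$, $\tr(\beta^{2j-2}\,df) = df \wedge \tr(\beta^{2j-2})$, and $df \wedge df = 0$, to obtain
$$
f\,\tr\bigl((dA\,dB)^j\bigr) - j\,df \wedge \tr\bigl(A\,dB\,(dA\,dB)^{j-1}\bigr) = (-1)^j f^{j+1}\,\tr(\beta^{2j}).
$$
The lemma thus reduces to the vanishing $\tr(\beta^{2j}) = 0$. This I would deduce from graded-cyclic invariance of the trace applied to the factorization $\beta^{2j} = \beta\cdot\beta^{2j-1}$: since both factors are matrices of odd-degree forms, $\tr(\beta\cdot\beta^{2j-1}) = -\tr(\beta^{2j-1}\cdot\beta) = -\tr(\beta^{2j})$, forcing $2\tr(\beta^{2j}) = 0$, and hence $\tr(\beta^{2j}) = 0$ since $\chr(k) = 0$.

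The principal obstacle I anticipate is sign bookkeeping in the graded-commutative setting of matrix-valued differential forms. Introducing $\beta$ and isolating the commuting pair $(X,Y)$ is what makes the computation tractable; without this organization, the direct expansion would be a tangle of terms with intricate signs, whereas here the remaining steps are essentially forced.
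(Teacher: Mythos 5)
Your proof is correct and follows essentially the same route as the paper's: reduce to the case where $f$ is invertible via the injection $\Omega^{2j}_{R/k} \into \Omega^{2j}_{R[1/f]/k}$, expand $(dA\,dB)^j$ in terms of a matrix-valued one-form (you use $\beta = dA\cdot A^{-1}$; the paper uses $dB^{-1}\,dB$ and $dB\,B^{-1}$, which amounts to the same computation), and kill the leftover term by the observation that the trace of an even power of a matrix of odd-degree forms vanishes. Your organization via the square-zero term $X=\beta\,df$ and the collapsed binomial expansion is a clean way to package the sign bookkeeping, but it is the same argument.
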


\begin{proof} 
Since $f$ is a non-zero divisor on $R$, and $R$ is essentially smooth over $k$, the map
$\Omega^{2j}_{R/k}   \to \Omega^{2j}_{R/k}[1/f] \cong \Omega^{2j}_{R[1/f]/k}$ is an injection, and so without loss we may assume $f$ is a unit in $R$.  Then $A$ and $B$ are invertible matrices such that $A = fB^{-1}$, and so
$$
dA = f d(B^{-1})  + df B^{-1}.
$$
Using that $df \smsh df = 0$ and $\tr(XY) = (-1)^{pq} \tr(YX)$ if $X,Y$ are matrices with entries in $\Omega^p$ and  $\Omega^q$, respectively, we get
$$
\begin{aligned}
\tr((dA dB)^j) & = tr((f dB^{-1} dB + df B^{-1} dB)^j) \\
& =   f^j \tr((dB^{-1}dB )^j) + j f^{j-1} df \tr(B^{-1} dB (dB^{-1} dB)^{j-1}). \\
\end{aligned}
$$
Also, $dB dB^{-1} = - (dB B^{-1})^2$ and so 
$$
\tr((dB dB^{-1})^j) = (-1)^j \tr((dB B^{-1})^{2j}) = 0;
$$
the last equation holds since the trace of an even power of a matrix with odd degree entries over any graded commutative ring is 0. 
Using $B^{-1} = f^{-1}A$, $dB^{-1} = f^{-1} dA - df B^{-1}$, and $df \smsh df = 0$ therefore gives
$$
\begin{aligned}
\tr((dA dB)^j)  & = j f^{j-1} df \tr(B^{-1} dB (dB^{-1} dB)^{j-1}) \\
& = j f^{-1} df \tr(A dB (dA dB)^{j-1}). \qedhere
\end{aligned} 
$$
\end{proof}

\begin{proof}[Proof of Theorem \ref{thm915}]
  The second assertion follows from the first: by construction, $ch_{X_\infty}(A,B)$ belongs to
  $\im(H^n(X'; \Q) \to H^n(X_\infty; \Q)_1) = \ker(N)$, and, granting that (1) holds, 
$ch_{X_\infty}(A,B) \in F^p H^n(X_\infty; \C)_1$ follows from Remark \ref{E914b}.
The final clause follows from (\ref{E914}).

To prove (1), 
choose a lift of $(A,B) \in mf(Q^h,f)$ to an object of $mf(\tQ, f)$ for some \etale extension $Q \subseteq \tQ$ contained in $Q^h$.
Set $V = \Spec(\tQ)$ and $V' := \Spec(Q'[1/f])$. Without loss of generality, we may assume $X$ is sufficiently small so that every element of $\tQ$ converges
absolutely on $X$, and thus we may interpret $A, B$ as matrices with entries in $\Gamma(X, \cO_X^{an})$.

The composition 
$$
K_1(V') \xra{ch_1^{n+1}} H^n_{deR}(V'; \C) \to H^n_{deR}(X'; \C)
$$
sends $[A]$ to the class of
$$
\frac{-1}{(2 \pi i)^p}\frac{(p-1)!}{n!} \tr(A^{-1} dA (dA^{-1} dA)^{p-1}) \in \Gamma(X', \Omega_X^{\an, n}).
$$
Since $AB = f$ we have 
$$
dA^{-1} = f^{-1} dB - f^{-2} B df,
$$
and hence the image of $ch_1^{n+1}([A])$ in $H^n_{deR}(X'; \C)$ is the class represented by
$$
\frac{-1}{(2 \pi i)^p}\frac{(p-1)!}{n!} f^{-p} \tr(B dA (dB dA)^{p-1}) + df \smsh \omega
$$
for some $\omega \in \G(X', \Omega^{\an, n-1}_X)$. The composition
$$
H^n_{deR}(X'; \C)  \cong H^n(X'; \C) \to H^n(X_\infty; \C)
$$
sends $df \smsh \omega$ to $0$, 
because, for any $t \in T'$,  $X_\infty$ and $f^{-1}(t) \cap X'$ are homotopy equivalent,
and $df$ restricts to $0$ in the de Rham cohomology of $f^{-1}(t) \cap X'$. Thus,
$\psi_0(ch_{X_\infty} (A, B)) \in \cG_0$ is given by the function
$$
t \mapsto \frac{-1}{(2 \pi i)^p}\frac{(p-1)!}{n!} t^{-p} \tr(B dA (dB dA)^{p-1})|_{X_t}.
$$

By Lemma \ref{lem913e}, we have
$$
[f \tr((dA dB)^p)] = [p df \smsh \tr(A dB (dA dB)^{p-1})] \text{ in $H''_0$,} 
$$
and so  the definition of $s_0$ gives that  $s_0(\tr((dA dB)^p)) \in \cG_0$ is the function
$$
t \mapsto   p t^{-1}  \tr(A dB (dA dB)^{p-1})|_{X_t}.
$$

Since the class in $[(B, A)] \in K_0^\Delta([mf(Q^h, f)])$ is equal to $-[(A, B)]$,  we arrive at
\begin{equation} \label{E817b}
t^{p-1} \psi_0(\ch_{X_\infty} (A,B))   =  \frac{1}{(2 \pi i)^p}\frac{(p-1)!}{p \cdot n!}  s_0(\tr(dA dB)^p).
\end{equation}

By \cite[p. 16]{Hertling}, we have 
$$
\del_t \circ \psi_0 = - t^{-1} \circ \psi_0 \circ N/(2 \pi i),
$$
and hence
$$
\del_t \psi_0(ch_{X_{\infty}}(A,B)) = 0.
$$
By applying $\del_t^{p-1}$  to \eqref{E817b} we thus get
 $$
(p-1)!  \psi_0(\ch_{X_\infty}(A,B) )   =  \frac{1}{(2 \pi i)^p}\frac{(p-1)!}{p \cdot n!} \del_t^{p-1}  s_0(\tr(dA dB)^p), 
$$
which implies (1). 
\end{proof}

\begin{lem} 
\label{idemcompletion}
The canonical map
$$
[mf(Q, f)] \to [mf(Q^h, f)]
$$
exhibits $[mf(Q^h, f)]$ as the idempotent completion of $[mf(Q, f)]$. In particular, the canonical maps
  $$
K_0(mf(Q,f)) \to K_0(mf(Q^h, f)) \leftarrow K_0^\Delta([mf(Q^h, f)])
  $$
  are isomorphisms.
\end{lem}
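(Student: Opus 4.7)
The plan is to factor the functor through the $\fm$-adic completion, apply Dyckerhoff's Theorem 5.7, and bridge the gap between the henselization and the completion by a compact-generator argument. First, since $f$ has its unique singularity at $\fm$, \cite[Proposition 1.14]{orlov} gives a quasi-equivalence $mf(Q, f) \xra{\simeq} mf(Q_\fm, f)$, so without loss of generality $Q$ is local. By \cite[Theorem 5.7]{dyckerhoff} the $\fm$-adic completion then induces an equivalence $[mf(Q, f)]^{\idem} \xra{\simeq} [mf(\hat Q, f)]$, and we factor the functor of interest as
$$[mf(Q, f)] \lra [mf(Q^h, f)] \lra [mf(\hat Q, f)].$$
Since $[mf(\hat Q, f)]$ is itself idempotent complete (being equivalent to an idempotent completion), the lemma reduces to showing that the second arrow is an equivalence of triangulated categories.

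To prove $[mf(Q^h, f)] \xra{\simeq} [mf(\hat Q, f)]$ I would adapt Dyckerhoff's argument. Under the isolated-singularity hypothesis the stabilization $k^{\on{stab}}$ of the residue field $k = Q^h/\fm = \hat Q/\fm$ is a compact generator of both sides (an analogue of \cite[Theorem 3.5]{LP} applies to each), so it suffices to check that the canonical map of endomorphism dg algebras
$$\End_{mf(Q^h, f)}(k^{\on{stab}}) \lra \End_{mf(\hat Q, f)}(k^{\on{stab}})$$
is a quasi-isomorphism. Both cohomologies compute stable Ext of $k$ against itself over the hypersurface quotient; these are finite-dimensional $k$-vector spaces supported at $\fm$, so they depend only on a sufficiently high Artinian truncation of the base ring. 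Since $Q^h/\fm^N = \hat Q/\fm^N$ for every $N$, the map is an isomorphism on cohomology, and the induced triangulated functor is an equivalence.

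The $K_0$ claims then follow formally from the definition $K_0(\fC) = K_0^\Delta([\fC]^{\idem})$: both $[mf(Q, f)]^{\idem}$ and $[mf(Q^h, f)]$ are equivalent to $[mf(\hat Q, f)]$, and $[mf(Q^h, f)]$ is now known to be idempotent complete. The main obstacle is making the compact-generator step rigorous over $Q^h$, which is not essentially of finite type over $k$, so Dyckerhoff's proof of Theorem 5.7 does not apply verbatim. A clean workaround is to write $Q^h = \colim_\lambda \tilde Q_\lambda$ as a filtered colimit of essentially smooth local étale extensions of $Q$ and to exploit that $mf(-, f)$ commutes with such colimits, reducing the endomorphism-algebra computation to cases where \cite[Theorem 5.7]{dyckerhoff} applies directly; alternatively, one may invoke Artin approximation to lift matrix factorizations from $\hat Q$ to $Q^h$ and compare Hom-complexes directly.
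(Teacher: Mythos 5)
Your reduction to the local case and the use of \cite[Theorem 5.7]{dyckerhoff} to identify $[mf(Q,f)]^{\idem}$ with $[mf(\widehat{Q},f)]$ match the paper, but the core of your argument has a gap. The compact-generator computation you sketch (both sides generated by $k^{\on{stab}}$, endomorphism dg algebras quasi-isomorphic) only yields a \emph{Morita} equivalence, i.e.\ an equivalence $[mf(Q^h,f)]^{\idem} \simeq [mf(\widehat{Q},f)]$ after idempotent completion of the source. It does not show that $[mf(Q^h,f)] \to [mf(\widehat{Q},f)]$ is essentially surjective, and hence does not show that $[mf(Q^h,f)]$ is itself idempotent complete. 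That idempotent completeness is precisely the crux of the lemma: it is needed both for the statement that $[mf(Q,f)] \to [mf(Q^h,f)]$ \emph{is} the idempotent completion (rather than merely a fully faithful functor whose further completion is), and for the assertion that $K_0^\Delta([mf(Q^h,f)]) \to K_0(mf(Q^h,f)) = K_0^\Delta([mf(Q^h,f)]^{\idem})$ is an isomorphism --- a map that, as noted in Section 2 of the paper, fails to be an isomorphism in general. Your argument is in fact circular at this point: to deduce essential surjectivity from generation by $k^{\on{stab}}$ you would need the essential image to be closed under direct summands, which is exactly the idempotent completeness you are trying to prove.

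The paper closes this gap with a separate input: the proof of \cite[Lemma 5.6]{dyckerhoff} together with \cite[Theorem 1.8]{LW} (a Krull--Schmidt/idempotent-splitting statement for maximal Cohen--Macaulay modules over a \emph{henselian} local ring) shows directly that $[mf(Q^h,f)]$ is idempotent complete; combined with the fact that both $[mf(Q,f)] \to [mf(\widehat{Q},f)]$ and $[mf(Q^h,f)] \to [mf(\widehat{Q},f)]$ are idempotent completions, the lemma follows with no need for an endomorphism-dga comparison. Your closing remark about Artin approximation is the germ of an alternative fix --- lifting matrix factorizations from $\widehat{Q}$ to $Q^h$ would give essential surjectivity of $[mf(Q^h,f)] \to [mf(\widehat{Q},f)]$ and hence the required idempotent completeness --- but as written it is an undeveloped aside rather than a proof, and your proposed ``filtered colimit of \'etale extensions'' workaround addresses only the computation of the endomorphism algebra, not the missing essential surjectivity.
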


\begin{proof}
Let $\widehat{Q}$ denote the $\fm$-adic completion of $Q$ (which coincides 
with the $\fm$-adic completion of $Q^h$). 
 By \cite[Proposition 1.14]{orlov} and \cite[Theorem 5.7]{dyckerhoff}, each of the canonical maps
\begin{equation}
\label{completion}
[mf(Q, f)] \to [mf(\widehat{Q}, f)] \leftarrow
[mf(Q^h, f)] 
\end{equation}
exhibit its target as the idempotent completion of its source. 
On the other hand,  the proof of \cite[Lemma 5.6]{dyckerhoff}, along with an application of \cite[Theorem 1.8]{LW}, shows that $[mf(Q^h, f)]$ is idempotent complete.
The first assertion follows. The rest follows from the results described in Section \ref{background}.
\end{proof}

Composing the map $ch_{X_\infty}$ defined in Proposition \ref{prop915} with the isomorphism 
$$
K_0(mf(Q, f)) \cong K_0^{\Delta}([mf(Q^h,f)])
$$ 
of the Lemma gives a map
\begin{equation} \label{E58b}
K_0(mf(Q, f)) \to H^n(X_\infty; \Q),
\end{equation}
which, abusing notation a bit, we also write as $ch_{X_\infty}$.

\begin{cor} \label{cor913}
In the setting of Theorem \ref{thm915}, for any $\a \in K_0(mf(Q,f))$, we have 
$$
S(ch_{X_\infty}(\a), ch_{X_\infty}(\a)) \geq 0,
$$ 
and
$$
S(ch_{X_\infty}(\a), ch_{X_\infty}(\a)) = 0 \text{ if and only if }
ch_{X_\infty}(\a) = 0,
$$
where $S$ is the pairing on $H^n(X_\infty; \Q)_1$ described in \eqref{E914d}.
  \end{cor}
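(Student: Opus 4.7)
The plan is to reduce the problem to an application of the Hodge--Riemann bilinear relations on a single Hodge $(p,p)$-piece of a pure Hodge substructure, where $p := (n+1)/2$. Set $y := ch_{X_\infty}(\alpha) \in H^n(X_\infty; \Q)_1$. From Theorem \ref{thm915}(2), $y \in \ker N \cap F^p H^n(X_\infty; \C)_1$ and hence $y \in W_{n+1} H^n(X_\infty; \Q)_1$. Since $y$ is rational, complex conjugation on $H^n(X_\infty; \C)_1$ fixes $y$, so $y$ also lies in $\overline{F^p}$.

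I would then examine the projection $\bar y \in \Gr^W_{n+1}$, which carries a pure Hodge structure of weight $n+1 = 2p$. Two observations pin down $\bar y$. First, $Ny = 0$ descends to $N \bar y = 0$ in $\Gr^W_{n-1}$, so $\bar y$ lies in the primitive part $P_0 := \ker(N \colon \Gr^W_{n+1} \to \Gr^W_{n-1})$. Second, $y \in F^p \cap \overline{F^p}$ forces $\bar y \in F^p \Gr^W_{n+1} \cap \overline{F^p} \Gr^W_{n+1}$, which equals $H^{p,p}(\Gr^W_{n+1})$ because $\Gr^W_{n+1}$ is pure of weight $2p$. Hence $\bar y$ lies in the $(p,p)$-piece $P_0^{p,p}$ of the primitive cohomology.

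To finish, I would reduce $S(y,y)$ to the induced polarization on $\Gr^W_{n+1}$ and invoke Hodge--Riemann. The standard orthogonality $W_k^\perp = W_{2(n+1)-k-1}$ for PMHS polarizations yields $S(W_{n+1}, W_n) = 0$ and $S(W_n, W_n) = 0$. So if I choose any lift $\tilde y \in W_{n+1}$ of $\bar y$ and write $y = \tilde y + y'$ with $y' \in W_n$, the cross and lower-weight terms in $S(y,y)$ vanish, and $S(y,y) = \bar S(\bar y, \bar y)$, where $\bar S$ is the polarization that $S$ induces on $\Gr^W_{n+1}$. The Hodge--Riemann bilinear relations then assert that $\bar S$ is positive definite on the real points of $P_0^{p,p}$ (the factor $(-1)^{n(n-1)/2}$ in Hertling's formula \eqref{E914d} is arranged precisely so that the sign emerges positive), which gives $S(y,y) \geq 0$ with equality if and only if $\bar y = 0$. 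The converse direction follows by iteration: if $\bar y = 0$ then $y \in W_n$, and since $F^p \Gr^W_k \cap \overline{F^p} \Gr^W_k = 0$ for $k < 2p$, the projection of $y$ to each $\Gr^W_k$ with $k \leq n$ must also vanish, forcing $y = 0$. The principal obstacle I anticipate is matching Hertling's sign convention to the classical Hodge--Riemann statement to secure positivity rather than negativity on $P_0^{p,p}$; this is essentially bookkeeping, but it is the step most prone to a subtle sign slip and must be carried out carefully.
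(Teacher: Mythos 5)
Your proof is correct and is essentially the paper's own argument: the paper deduces the corollary from Theorem \ref{thm915}(2) together with Lemma \ref{lem94} of the Appendix, and the proof of that lemma is precisely your computation — project to $\Gr^W_{n+1}$, note that the image of $ch_{X_\infty}(\a)$ lands in $F^{p}P\Gr^W_{n+1}\cap\overline{F^{p}P\Gr^W_{n+1}}$, invoke the positivity axiom of the PMHS there, and get injectivity from $W_{n}H_\Q\cap F^{p}H_\C=0$ (Lemma \ref{lem815}), which is the same descending-through-the-weight-filtration argument you give at the end.
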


\begin{proof} This follows from Theorem \ref{thm915} (2) and Lemma \ref{lem94} of the Appendix.
\end{proof}

\section{Positive semi-definiteness of the Euler pairing}
\label{Euler}

Throughout this section, we continue to operate under Assumptions \ref{ass919}. By \cite[4.1 and 4.4]{Hertling}, there is a ``higher residue'' pairing
$$
P_S: \V \times \V \to \C[[\del_t^{-1}]]
$$
taking values in the ring of formal powers series on the symbol $\del_t^{-1}$ 
that satisfies the following properties:
\begin{enumerate}
\item For elements $a, b \in H^n(X_\infty; \C)_1$, we have 
$$
P_S(\psi_0(a), \psi_0(b)) = 
\frac{-1}{(2\pi i)^{n+1}} S(a,b) \del_t^{-2}.
$$

\item For $[\omega], [\nu] \in H_0''$, we have
$$
P_S(s_0([\omega]), s_0([\nu])) = \Res_f(\omega, \nu) \cdot \del_t^{-n-1} 
 + \text{terms involving $\del_t^j$ for $j < -n-1$,} 
$$
where $\Res_f$ denotes the classical residue pairing on 
$$
\frac{\Omega^{\an, n+1}_{X,0}}{df \smsh  \Omega^{\an, n}_{X,0}}.
$$

\item For $\alpha, \beta \in \V$, 
$$
P_S(\del_t^{-1} \a, \b) = \del_t^{-1} P_S(\a,\b)
$$
and
$$
P_S(\a, \del_t^{-1}  \b) = - \del_t^{-1} P_S(\a,\b).
$$
\end{enumerate}
\begin{rem}
Item (1) above differs from \cite[Definition 4.1]{Hertling} by a sign. This is due to an error in loc. cit., which is corrected in \cite[(10.83)]{hertlingbook}.\end{rem}

As above, set $p = \p$. For any object $(A, B) \in mf(Q^h, f)$, define 
\begin{equation} \label{917}
ch_{PV}(A,B): =\frac{2 \tr(dA dB)^p}{(n+1)!} \in \Omega^{\an, n+1}_{X,0};
\end{equation}
the reason for the choice of notation $ch_{PV}$ will be made clear below. By Theorem \ref{thm915}, we have 
$$
s_0(ch_{PV}(A,B)) = (2\pi i)^p \del_t^{-p+1}  \psi_0(ch_{X_\infty}(A,B)),
$$
and so the third property of $P_S$ listed above implies
$$
P_S (s_0(ch_{PV}(A,B)), s_0(ch_{PV}(A',B'))) =
(-1)^{p-1} (2 \pi i)^{n+1} P_S(\psi_0(\ch_{X_\infty}(A,B), \ch_{X_\infty}(A', B'))) \del_t^{-n+1}
$$
for any $(A,B), (A', B') \in mf(Q^h, f)$. From the first two properties of $P_S$, we get
$$
\begin{aligned}
P_S (s_0(ch_{PV}(A,B)), s_0(ch_{PV}(A',B'))) =  
\Res_f & \left(ch_{PV}(A,B), ch_{PV}(A', B')   \right) \del_t^{-n-1} \\
& + \text{ terms of lower degree,} \\
\end{aligned}
$$
and
$$
P_S(\psi_0(\ch_{X_\infty}(A,B)), \psi_0(\ch_{X_\infty}(A', B')))  \del_t^{-n+1} =
\frac{-1}{(2 \pi i)^{n+1}} S(\ch_{X_\infty}(A,B), \ch_{X_\infty}(A', B')) \del_t^{-n -1}.
$$
By comparing coefficients, we deduce
\begin{equation} \label{E916}
\Res_f(ch_{PV}(A,B), ch_{PV}(A',B'))  =  (-1)^p S(\ch_{X_\infty}(A,B), ch_{X_\infty}(A',B')). 
\end{equation}
  
Finally, by Polishchuk-Vaintrob's Hirzebruch-Riemann-Roch formula for matrix factorizations (\cite[(0.2), (0.5), (0.6)]{PV}), we have
\begin{equation}
\label{PVresult}
\chi((A,B), (A',B'))_{mf(Q, f)}   = (-1)^{{n + 1 \choose 2}} \Res_f( ch_{PV}(A,B), ch_{PV}(A',B')).
\end{equation}
(Recall that, by Lemma \ref{idemcompletion}, any class in $K_0(mf(Q, f))$ may be represented by a free matrix factorization in $mf(Q^h, f)$.)
Combining (\ref{E916}) and (\ref{PVresult}), and noting that $(-1)^{{n + 1 \choose 2}}  =(-1)^p$, we obtain the following analogue of Polishchuk-Vaintrob's Hirzebruch-Riemann-Roch formula:

\begin{thm}
\label{BvS}
For any matrix factorizations $(A, B), (A', B') \in mf(Q, f)$, we have
$$\chi((A,B), (A',B'))_{mf(Q,f)} =  S(\ch_{X_\infty}(A,B), ch_{X_\infty}(A',B')).$$
\end{thm}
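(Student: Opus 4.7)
The plan is to observe that the preceding discussion has essentially done all the work: we simply need to chain together the identity (\ref{E916}) with Polishchuk--Vaintrob's Hirzebruch--Riemann--Roch formula (\ref{PVresult}). Concretely, by Lemma \ref{idemcompletion} every class in $K_0(mf(Q,f))$ is represented by a free matrix factorization that lifts to $mf(Q^h,f)$, so it suffices to prove the identity when $(A,B)$ and $(A',B')$ are free matrix factorizations. Then (\ref{PVresult}) gives
$$\chi((A,B),(A',B'))_{mf(Q,f)} = (-1)^{\binom{n+1}{2}}\Res_f(ch_{PV}(A,B),ch_{PV}(A',B')),$$
while (\ref{E916}) identifies $\Res_f(ch_{PV}(A,B),ch_{PV}(A',B'))$ with $(-1)^p S(ch_{X_\infty}(A,B),ch_{X_\infty}(A',B'))$. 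Since $n+1=2p$ we have $\binom{n+1}{2}=p(2p-1)$, so $(-1)^{\binom{n+1}{2}}=(-1)^p$, and the two signs cancel to yield the claim.

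Having reduced things this way, the real content lies in (\ref{E916}), which in turn rests on Hertling's higher residue pairing $P_S$. My approach to deriving (\ref{E916}), were it not already laid out, would be to apply $P_S$ to the two sides of the identity
$$s_0(ch_{PV}(A,B)) = (2\pi i)^p \partial_t^{-p+1}\psi_0(ch_{X_\infty}(A,B))$$
from Theorem \ref{thm915}(1). Using property (3) of $P_S$ to pull the $\partial_t^{-1}$'s out (picking up a sign $(-1)^{p-1}$ from the second slot) produces a single identity of formal series in $\partial_t^{-1}$. Then expanding the left-hand side via property (2) and the right-hand side via property (1), and comparing coefficients of $\partial_t^{-n-1}$, yields (\ref{E916}).

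The main obstacle in the overall argument is not this final assembly but the preparatory work: establishing the properties of Hertling's $P_S$ (quoted here from \cite{Hertling, hertlingbook}), Polishchuk--Vaintrob's HRR formula (\ref{PVresult}), and most importantly the explicit identification in Theorem \ref{thm915}(1) of $\psi_0(ch_{X_\infty}(A,B))$ with an appropriate power of $\partial_t$ applied to $s_0(\operatorname{tr}(dA\,dB)^p)$. Once these are in hand, the proof of Theorem \ref{BvS} is essentially bookkeeping of signs and factors of $2\pi i$, taking care that the normalization $(-1)^p$ from the $P_S$-shifting matches the sign $(-1)^{\binom{n+1}{2}}$ arising from \cite{PV}.
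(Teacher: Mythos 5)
Your proposal is correct and follows essentially the same route as the paper: the displayed theorem is obtained by combining the identity \eqref{E916} (itself derived by applying Hertling's pairing $P_S$ to $s_0(ch_{PV}) = (2\pi i)^p \del_t^{-p+1}\psi_0(ch_{X_\infty})$ and comparing coefficients) with the Polishchuk--Vaintrob formula \eqref{PVresult}, and observing that $(-1)^{\binom{n+1}{2}} = (-1)^p$. Your sign bookkeeping and the reduction via Lemma \ref{idemcompletion} match the paper's argument.
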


\begin{cor} \label{cor819} With the notation as above, given $\a  \in K_0(mf(Q, f))$ we have
$$
\chi(\a, \a)_{mf(Q,f)}  \geq 0,
$$
and 
$$
\chi( \a, \a)_{mf(Q,f)}  = 0 \, \text{ if and only if } \,  ch_{X_\infty}(\a) = 0.
$$
\end{cor}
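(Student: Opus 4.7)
The plan is to obtain this corollary as an immediate consequence of Theorem \ref{BvS} combined with Corollary \ref{cor913}. First, I would use bilinearity to extend Theorem \ref{BvS} from a statement about pairs of free matrix factorizations to a statement about arbitrary classes in $K_0(mf(Q,f))$: since, by Lemma \ref{idemcompletion}, the natural map $K_0(mf(Q,f)) \to K_0^\Delta([mf(Q^h,f)])$ is an isomorphism and every object of $mf(Q^h,f)$ is (by Proposition \ref{prop915}) of the form $(A,B)_h$ for a free matrix factorization $(A,B)$ over some étale extension of $Q$, both sides of the formula in Theorem \ref{BvS} are $\Z$-bilinear on $K_0(mf(Q,f))$ and agree on a generating set of pairs, so
\[
\chi(\a,\b)_{mf(Q,f)} \;=\; S\bigl(ch_{X_\infty}(\a), ch_{X_\infty}(\b)\bigr)
\]
for all $\a,\b \in K_0(mf(Q,f))$.

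Specializing to $\b = \a$ then gives $\chi(\a,\a)_{mf(Q,f)} = S(ch_{X_\infty}(\a), ch_{X_\infty}(\a))$. Now I would invoke Corollary \ref{cor913}, which asserts exactly that the right-hand side is nonnegative, and vanishes if and only if $ch_{X_\infty}(\a) = 0$. Combining these two facts yields both claims of the corollary simultaneously.

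I do not expect any real obstacle here: the content of the corollary has been packaged into Theorem \ref{BvS} (which translates the Euler pairing into the polarizing form $S$ via Polishchuk--Vaintrob's Hirzebruch--Riemann--Roch formula together with the higher residue pairing identity) and into Corollary \ref{cor913} (which provides the positivity of $S$ restricted to the image of $ch_{X_\infty}$, coming ultimately from Theorem \ref{thm915}(2) placing the image inside $\ker(N) \cap F^p H^n(X_\infty;\C)_1$ and the Hodge-theoretic positivity lemma from the Appendix). The only mild point requiring care is the bilinear extension above, but this is straightforward from Lemma \ref{idemcompletion}.
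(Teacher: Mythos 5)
Your proposal is correct and is essentially the paper's (implicit) argument: Corollary \ref{cor819} is stated without proof precisely because it follows at once from Theorem \ref{BvS} together with Corollary \ref{cor913}. Your bilinearity step is fine, and is in fact even simpler than you suggest, since by Lemma \ref{idemcompletion} every single class in $K_0(mf(Q,f))$ is already represented by one free matrix factorization in $mf(Q^h,f)$ (using $[P'[1]] = -[P']$ and that projectives over the local ring $Q^h$ are free), so Theorem \ref{BvS} applies directly.
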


\begin{rem} It is asserted in \cite{MPSW} that the ``Herbrand difference'' $h$ is negative definite for the graded case of this Corollary.
  The Herbrand difference coincides with the Euler pairing; see the proof of Corollary \ref{chitheta} below.
  But \cite{MPSW} is incorrect, since the authors overlooked the minus sign in $\theta(M, M') = - h(M^*, M')$; see \cite[Corollary 6.4.1]{buchweitz}. The same
  error occurs in \cite[Remark 1.2]{BvS}. 
\end{rem}

\section{Proof of the main theorem}
Throughout this section, we continue to operate under Assumptions \ref{ass919}. As in the introduction,
we let  $HP( - )$ denote the periodic cyclic homology functor for dg-categories, and we write $HH( - )$ and $HN( - )$ for the Hochschild
and negative cyclic homology functors.  As discussed in the introduction, there is a Chern character map
$$
ch_{HP} : K_0( - ) \to HP_0( - ),
$$
and there are analogous maps $ch_{HN}$, $ch_{HH}$. There are canonical natural transformations $HN_0 \to HH_0$ and $HN_0 \to HP_0$, and each of $ch_{HP}$ and $ch_{HH}$
factor through $ch_{HN}$. 
Note that the subscript $0$ on $HP, HH,$ and $HN$ is to be understood modulo $2$,
since we are working with differential $\Z/2$-graded categories.

\begin{prop}
\label{HN}
Let $Q$ and $f$ be as in Assumptions \ref{ass919}. As in Section \ref{brieskorn}, let $H_0''$ denote the Brieskorn lattice, and let $\widehat{H_0''}$ denote its $\fm$-adic completion. There is a canonical isomorphism
$$
\widehat{H_0''} \xra{\cong} HN_0(mf(Q,f))
$$
such that, for any class $[(A, B)] \in K_0^{\Delta}([mf(Q^h,f)]) \cong K_0(mf(Q, f))$, the class in $H_0''$ represented by $ch_{PV}(A,B)$ (as defined in \eqref{917})
is sent, under the composition
\begin{equation}
\label{brieskornmap}
H_0'' \to \widehat{H_0''} \xra{\cong} HN_0(mf(Q,f)),
\end{equation}
to $ch_{HN}(A, B)$. 
\end{prop}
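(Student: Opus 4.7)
The plan is to build the isomorphism $\widehat{H_0''} \cong HN_0(mf(Q,f))$ by identifying both sides with a common formal twisted de Rham complex, and then to trace the Chern character along this identification. Using Morita invariance of $HN_*$ together with the quasi-equivalence $mf(Q,f) \simeq mf(\widehat{Q}, f)$ recalled in Section \ref{mf}, the problem reduces at once to computing $HN_0(mf(\widehat{Q}, f))$. For this, I would invoke the known computation of $HH_*$ and $HN_*$ of matrix factorization categories in the $\mathbb{Z}/2$-graded setting (due to Dyckerhoff, Efimov, Preygel, Segal, and others). This produces a canonical quasi-isomorphism between the negative cyclic complex of $mf(\widehat{Q}, f)$ and a formal twisted de Rham-type model of the shape $(\widehat{\Omega}^{\bullet}_{Q/k}[[u]], u\, d + df \wedge \cdot)$, with $u$ a formal variable of appropriate cohomological degree.

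Next, I would match $H^0$ of this twisted de Rham model with $\widehat{H_0''}$. The essential commutative-algebra input is the identity $df \wedge d\alpha = d(f\, d\alpha)$, which implies that, after quotienting by the image of $df \wedge \cdot$ in top degree, the $u$-deformation introduces precisely the relation $df \wedge d\Omega^{n-1} \subseteq \Omega^{n+1}$ defining $H_0''$. A standard faithful-flatness argument, comparing the analytic stalks $\Omega^{\an}_{X,0}$ used to define $H_0''$ with the algebraic forms on $\widehat{Q}$ produced by the $HN$ computation, then identifies both sides with the same completed Brieskorn lattice.

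Finally, I would verify the Chern character identity. For a free matrix factorization $(A,B) \in mf(\widehat{Q}, f)$, the class $ch_{HN}(A,B)$ admits an explicit Kapustin-Li-type trace representative; this is exactly the input underlying Polishchuk-Vaintrob's proof of the HRR formula \eqref{PVresult}, and under the identification constructed above it is represented by a constant multiple of $\tr((dA\, dB)^p)$. Matching the normalization constant then recovers $ch_{PV}(A,B) = \tfrac{2\,\tr((dA\, dB)^p)}{(n+1)!}$. The main obstacle is this final step: carefully reconciling the intrinsic categorical formula for $ch_{HN}$ with the explicit Polishchuk-Vaintrob expression requires tracking the factor $(n+1)!$, the sign conventions for the $\mathbb{Z}/2$-folded twisted differential, and the factor of $2$ in $ch_{PV}$ arising from the $\mathbb{Z}/2$-periodization of the Hochschild complex.
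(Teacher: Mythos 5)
Your overall route is the same as the paper's: an HKR-type identification of the negative cyclic complex of the matrix factorization category with a $u$-twisted de Rham complex $(\Omega^\bullet[[u]],\, ud \pm df\smsh\cdot\,)$, an identification of its top cohomology with the completed Brieskorn lattice, and an explicit trace formula for $ch_{HN}$ of a free matrix factorization (the paper cites Efimov and Preygel--Polishchuk for the first step, Schulze for the second, and its companion paper [BW, Example 6.1] for the third). One organizational difference: you complete first and apply the HKR theorem to $mf(\widehat{Q},f)$, whereas the paper applies it over the smooth affine $Q$ and then shows the map of bicomplexes $(\Omega^\bullet_{Q/\C}[[u]], ud-df) \to (\hOmega^\bullet[[u]], ud-df)$ is a quasi-isomorphism row by row; the latter is safer, since the cited HKR statements are formulated for smooth affine algebras, and it costs only a short spectral sequence comparison.

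The one step where your justification is genuinely too thin is the identification of $\coker\bigl(\hOmega^n[[u]] \xra{ud-df} \hOmega^{n+1}[[u]]\bigr)$ with $\widehat{H_0''}$. The identity $df\smsh d\alpha = d(f\,d\alpha)$ and the relation $u\,d\eta \equiv df\smsh\eta$ in the cokernel only show that $df\smsh d\hOmega^{n-1}$ maps to zero, hence give a well-defined map $\hOmega^{n+1}/(df\smsh d\hOmega^{n-1}) \to \coker(ud-df)$, and an easy induction on powers of $u$ shows it is surjective. (Also note that quotienting by the image of $df\smsh\cdot$ on all of $\hOmega^n$, as you write, would give the Jacobian-ring quotient, i.e.\ the $u=0$ fiber, not the Brieskorn lattice.) Injectivity is the substantive content: it is equivalent to the $u$-torsion-freeness (freeness of rank $\mu$ over $\C[[u]]$) of the completed Brieskorn lattice, a theorem of Sebastiani/Malgrange, and is not a consequence of faithful flatness; faithful flatness only addresses the separate point that the analytic stalk quotient $H_0''$ and the formal quotient have the same $\fm$-adic completion. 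The paper disposes of this by citing Schulze; you need either that citation or an argument for injectivity. Your treatment of the Chern character normalization is at the same level of precision as the paper's (which outsources it to an explicit computation elsewhere), so flagging it as the remaining bookkeeping is reasonable.
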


\begin{rem}
As an aside, we note that, by \cite[Theorem 1]{shklyresidue} and \cite[Theorem 1.8]{BW2}, the map (\ref{brieskornmap}) identifies, up to multiplication by the sign $(-1)^{\binom{n}{2}}$, the higher residue pairing on $H_0''$ with the natural pairing on $HN_0(mf(Q,f))$ defined as in \cite[line (6)]{shklyresidue}.
\end{rem}

\begin{proof}
By \cite[Proposition 3.14]{efimov} and \cite[Section 4.8]{PP}, there is an HKR-type isomorphism
$$
HN(mf(Q,f)) \cong (\Omega^\bullet_{Q/\C}[[u]], ud - df)
$$
in the derived category of $\Z/2$-graded $k$-vector spaces. Here, the target is the ($\Z/2$-graded folding of the) direct product totalization of the upper-half plane bicomplex $B^{*,*}$, where $B^{p,q} = \Omega^{p+q}_{Q/\C} u^q$, the horizontal differential is given by $- \wedge df$, and the vertical differential is given by the de Rham differential $d$.
Because of the isolated singularity assumption, each row is exact everywhere except in the right-most position. Considering the spectral sequence associated to the filtration of $B^{*,*}$ by rows (\cite[Definition 5.6.2]{weibel}), one sees 
$$
HN_{0} (mf(Q,f)) \cong \coker\left(\Omega^n_{Q/\C}[[u]] \xra{ud - df} \Omega^{n+1}_{Q/\C}[[u]]\right).
$$

Let $\hQ$ be the completion of $Q$ at $\fm$, set $\hOmega^p = \Omega^p_{Q/\C} \otimes_{Q} \hQ$, and form the analogous 
bicomplex $\hB^{*,*}$ with direct product totalization
$(\hOmega^\bullet[[u]], ud - df)$. The map $B^{*,*} \to \hB^{*,*}$ is a quasi-isomorphism along each row, and so, by comparing spectral sequences associated to these bicomplexes, one sees
the canonical map
$$
(\Omega^\bullet_{Q/\C}[[u]], ud - df) \to (\hOmega^\bullet[[u]], ud - df)
$$
is a quasi-isomorphism.
We thus have an isomorphism
\begin{equation} \label{E819b}
HN_0 (mf(Q,f)) \cong \coker\left(\hOmega^n[[u]] \xra{ud - df} \hOmega^{n+1}[[u]]\right).
\end{equation}
Moreover, by \cite[Example 6.1]{BW}, for any $(A,B) \in mf(Q^h, f)$,
$ch_{HN}(A,B) \in HN_0(mf(Q^h,f)) \cong HN_0(mf(Q, f))$ corresponds, under \eqref{E819b}, to the class represented by
$$
\frac{2}{(n+1)!} \tr((dA dB)^\p) \in \hOmega^{n+1}.
$$

By \cite[Section 2]{schulze} (see also the discussion in \cite[Section 5]{shklyarovhodge}), there is a canonical isomorphism
$$
\widehat{H_0''} \xra{\cong} \coker\left(\hOmega^n[[u]] \xra{ud - df} \hOmega^{n+1}[[u]]\right),
$$ 
and the composition
$$
H_0''  \to \widehat{H_0''}  \xra{\cong} \coker\left(\hOmega^n[[u]] \xra{ud - df} \hOmega^{n+1}[[u]]\right) \\
 \cong HN_0(mf(Q, f))
$$
has the desired property.
\end{proof}

\begin{thm} \label{thm822}
The following statements are equivalent:
  \begin{enumerate}
  \item   $\chi( \alpha, \alpha )_{mf(Q, f)} = 0$.
  \item $ch_{X_\infty}(\alpha) = 0$ in $H^n(X_\infty; \Q)_1$.
  \item $ch_{HN}(\alpha) = 0$ in $HN_0(mf(Q,f))$.
  \item $ch_{HP}(\alpha) = 0$ in $HP_0(mf(Q,f))$.
  \item $ch_{HH}(\alpha) = 0$ in $HH_0(mf(Q,f))$.
  \end{enumerate}
\end{thm}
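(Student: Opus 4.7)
The plan is to establish the chain $(1)\iff(2)\iff(3)$ and then close the loop by proving $(3)\implies(4)\implies(1)$ and $(3)\implies(5)\implies(1)$. The equivalence $(1)\iff(2)$ is already given by Corollary \ref{cor819}, so the real work lies in the remaining steps.

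For $(2)\iff(3)$, I would leverage the identity derived inside the proof of Theorem \ref{thm915}, namely
\[
t^{p-1}\,\psi_0\bigl(ch_{X_\infty}(\alpha)\bigr) = \frac{(p-1)!}{p\cdot n!\,(2\pi i)^p}\,s_0\bigl(ch_{PV}(\alpha)\bigr) \quad \text{in } \cG_0,
\]
together with Proposition \ref{HN}. The equivalence reduces to tracking four injectivities: $\psi_0$ is injective by Lemma \ref{lem913}; multiplication by $t^{p-1}$ is invertible on $\cG_0$ because $\cG_0$ is a $\C\{t\}[t^{-1}]$-vector space; $s_0$ is injective since its image is a lattice (Remark \ref{lattice}); and the canonical map $H_0'' \to \widehat{H_0''}$ is injective, which I would deduce from the fact that $H_0''$ is a free $\C\{t\}$-module of rank $\mu$ together with standard Krull-intersection-type arguments. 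Proposition \ref{HN} identifies $\widehat{H_0''} \cong HN_0(mf(Q,f))$ so that $ch_{PV}(\alpha)$ corresponds to $ch_{HN}(\alpha)$, and chasing the above injections yields
\begin{align*}
ch_{HN}(\alpha)=0 &\iff ch_{PV}(\alpha)=0 \text{ in } H_0'' \\
&\iff s_0(ch_{PV}(\alpha))=0 \\
&\iff \psi_0(ch_{X_\infty}(\alpha))=0 \\
&\iff ch_{X_\infty}(\alpha)=0.
\end{align*}
To handle general classes $\alpha \in K_0(mf(Q,f))$ not representable by a single free matrix factorization, I would invoke Lemma \ref{idemcompletion} to pass to $K_0^\Delta([mf(Q^h,f)])$ and extend by bilinearity.

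To close the loop I would use well-known HRR-type factorizations of the Euler pairing. By construction, $ch_{HH}$ and $ch_{HP}$ factor as $ch_{HN}$ followed by the canonical natural transformations $HN_0 \to HH_0$ and $HN_0 \to HP_0$, yielding $(3)\implies(4)$ and $(3)\implies(5)$ immediately. For $(4)\implies(1)$, Shklyarov's HRR theorem \cite[Theorems 2 and 3]{shklyHRR}, already invoked in the introduction, shows that the Euler pairing on a smooth and proper dg-category factors through $ch_{HP}$ via a canonical pairing on $HP_0$; hence $ch_{HP}(\alpha)=0$ forces $\chi(\alpha,\alpha)_{mf(Q,f)}=0$. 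For $(5)\implies(1)$, the Polishchuk--Vaintrob HRR formula \eqref{PVresult} expresses $\chi((A,B),(A',B'))_{mf(Q,f)}$ as a residue pairing on the Jacobian module $\widehat{\Omega}^{n+1}/(df \wedge \widehat{\Omega}^{n})$, which is canonically $HH_0(mf(Q,f))$ via HKR (one can extract this from the proof of Proposition \ref{HN} by setting $u=0$); since $ch_{HH}(\alpha)=0$ forces $ch_{PV}(\alpha)=0$ in the Jacobian module, the residue pairing $\Res_f(ch_{PV}(\alpha),-)$ vanishes identically, yielding $\chi(\alpha,\alpha)_{mf(Q,f)}=0$.

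The main obstacle will be $(2)\iff(3)$: no individual injection is deep, but one must carefully assemble them and, for the last one, verify that the $\fm$-adic completion appearing in Proposition \ref{HN} is compatible with the $\C\{t\}$-module structure that makes $H_0''$ free of rank $\mu$. Once this is in hand, the remaining implications are essentially formal consequences of the two HRR-type formulas and the naturality of the various Chern characters.
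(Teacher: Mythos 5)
Your proof is correct and shares the paper's overall architecture --- $(1)\Leftrightarrow(2)$ from Corollary \ref{cor819}, $(2)\Rightarrow(3)$ from Theorem \ref{thm915} together with Proposition \ref{HN}, and $(3)\Rightarrow(4),(5)$ from the factorization of $ch_{HP}$ and $ch_{HH}$ through $ch_{HN}$ --- but you close the cycle differently. The paper proves $(4)\Rightarrow(3)$ by citing the injectivity of $HN_0(mf(Q,f))\to HP_0(mf(Q,f))$, i.e.\ noncommutative Hodge-to-de Rham degeneration (via \cite[Section 7]{dyckerhoff}), and gets $(5)\Rightarrow(1)$ directly from \cite[Theorems 2 and 3]{shklyHRR}; you instead prove $(4)\Rightarrow(1)$ via the factorization of the Euler pairing through $ch_{HP}$, and $(5)\Rightarrow(1)$ by rerouting through the Polishchuk--Vaintrob residue formula on the Jacobian module. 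Both closures are legitimate: yours avoids invoking Hodge-to-de Rham degeneration but leans on the $HP$-factorization of the Euler pairing asserted in the introduction (itself a $\Z/2$-graded adaptation of Shklyarov), while the paper's route additionally records the sharper statement $(3)\Leftrightarrow(4)$. Your direct proof of $(3)\Rightarrow(2)$ is logically superfluous given the cycle you establish, which is fortunate because it is the one place where you need an input not explicitly supplied by the paper, namely the injectivity of $H_0''\to\widehat{H_0''}$; this is true ($H_0''$ is $\C\{t\}$-free of rank $\mu$ and the completion amounts to base change along $\C\{t\}\to\C[[t]]$), but you are right to flag the compatibility of the $\fm$-adic completion with the $\C\{t\}$-module structure as the point requiring care. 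The remaining injectivities you invoke ($s_0$, $\psi_0$, and invertibility of $t$ on $\cG_0$) are all available in Sections \ref{GM} and \ref{brieskorn} exactly as you use them, and the discrepancy in your numerical constant in the displayed identity relating $\psi_0(ch_{X_\infty})$ to $s_0(ch_{PV})$ is harmless since only its nonvanishing matters.
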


\begin{proof} The equivalence of (1) and (2) is implied by Corollary \ref{cor819}. (2) $\Rightarrow$ (3) follows from Theorem \ref{thm915} and Proposition \ref{HN}.  (3) $\Rightarrow$ (4) and (3) $\Rightarrow$ (5) hold since $ch_{HP}$ and $ch_{HH}$ factor through $ch_{HN}$, and (4) $\Rightarrow$ (3) holds since the canonical map 
$$
HN_0(mf(Q,f)) \to
  HP_0(mf(Q,f))
$$ 
is an injection; as discussed in \cite[page 12]{shklyresidue}, this is equivalent to noncommutative Hodge-to-de Rham degeneration for $mf(Q, f)$,
which holds by \cite[Section 7]{dyckerhoff}. Finally, (5) $\Rightarrow$ (1) follows from \cite[Theorems 2 and 3]{shklyHRR}.
(Note that, by \cite[Theorem 5.2]{dyckerhoff}, $mf(Q, f)$ is Morita equivalent to a dga, so the results of \cite{shklyHRR} apply.)
\end{proof}

\begin{proof}[Proof of Theorem \ref{introthm}]
Apply Proposition \ref{prop1}, Corollary \ref{cor819}, and Theorem \ref{thm822}.
\end{proof}

\section{Hochster's theta pairing}
\label{hochstersection}
We now apply Corollary \ref{cor819} to prove Theorem \ref{hochster} from the introduction. Let $Q$ and $f$ be as in the statement of Theorem \ref{hochster}. Set $R = Q/f$, $Y = \Spec(Q)$, and $Z = \Spec(R)$.
\subsection{Background on intersection theory}
Let $K_0^{Z}(Y)$ denote the Grothendieck group of the triangulated category
of perfect complexes on $Y$ with support in $Z$. 
Let 
$$
ch_Y^Z : K_0^Z(Y) \to A_*(Z) \otimes \Q
$$
denote the \emph{localized Chern character}, as defined in \cite[Definition 18.1]{fulton}. Here, $A_i( - )$ denotes the group of dimension $i$ cycles modulo rational equivalence. Note that $ch_Y^Z$ is an isomorphism upon tensoring with $\Q$. Let $ch_i$ denote the composition of $ch_Y^Z$ with the projection of $A_*(Z)$ onto $A_i(Z)$. Gillet-Soul\'e define Adams operations $\psi^l$ on $K_0^Z(Y)$ for $l \ge 0$ in \cite[Section 4]{GS}; by a theorem of Kurano-Roberts (\cite[Theorem 3.1]{KR}), we have
$$
ch_i \circ \psi^l = l^{d - i} ch_i
$$
for $i \ge 0$ and $l \ge 1$, where $d = \dim(Q)$. 

The map
$$
\tau_Z : G_0(Z) \to A_*(Z) \otimes \Q
$$
defined in \cite[Section 18.2]{fulton} is also an isomorphism upon tensoring with $\Q$. Let $\tau_i$ denote the composition of $\tau_Z$ with the projection of $A_*(Z)$ onto $A_i(Z)$. There are also Adams operations $\psi_l$ defined on $G_0(Z)$ (\cite{soule}), and, by \cite[Proposition 2.4]{haution}, we have 
$$
\tau_i \circ \psi_l = l^{-i} \tau_i.
$$
There is an isomorphism
$$
r : K_0^Z(Y) \xra{\cong} G_0(Z)
$$
(\cite[Lemma 1.9]{GS}), and, by \cite[line (5)]{haution}, we have
\begin{equation}
\label{compatible}
\psi_l \circ r = l^{-d} (r \circ \psi^l).
\end{equation}
Let 
$$
G_{0, (i)}(Z) \subseteq G_0(Z) \otimes \Q
$$ 
be the inverse image of $\tau_i \otimes \Q$ and let
$$
K^Z_{0, (i)}(Y) \subseteq K_0^Z(Y) \otimes \Q
$$
be the inverse image of $ch_i \otimes \Q$. Using \eqref{compatible} we obtain
\begin{equation}
\label{summands}
r(K^Z_{0, (i)}(Y)) = G_{0, (i)}(Z).
\end{equation}

\subsection{Proof of Theorem \ref{hochster}}
Let
$$
\theta : G_0(Z) \times G_0(Z) \to \Z
$$ 
denote the Hochster theta pairing, as defined in Section \ref{application}. A conjecture of Dao-Kurano (\cite[Conjecture 3.4]{DK}) 
predicts that $\theta( \a, - ) : G_0(Z) \otimes \Q \to \Z$ is the zero map for any $\a \in G_{0, (i)}(Z) $ when $i \ne \frac{\dim(Q) }{2}$; in particular, if $\dim(Q)$ is
odd, $\theta$ vanishes. 
The first goal of this section is to prove this conjecture in the case where $R$ is a complex hypersurface:

\begin{thm}
\label{adams}
Suppose $g \in \C[x_0, \dots, x_n]$ is such that $g(0, \dots, 0) \ne 0$, and
let $f \in \fm = (x_0, \dots, x_n) \subseteq \C[x_0, \dots, x_n][1/g]$ be such that the singular locus of $R := \C[x_0, \dots, x_n][1/g]/(f)$ 
consists of only the maximal ideal $\fm$.  Then 
$$
\theta(\a, - ): G_0(Z) \otimes \Q \to \Z
$$
is the zero map whenever $\a \in G_{0, (i)}(Z) $ for $i \ne \frac{n+1}{2}$; in particular, it is $0$ for all $\a$ when $n$ is even.
\end{thm}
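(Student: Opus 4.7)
The plan is to use Buchweitz's singularity-category framework to relate Hochster's theta pairing to the Euler pairing on $mf(Q,f)$, transfer the problem to $H^n(X_\infty;\Q)_1$ via Theorem \ref{BvS}, and then apply an Adams-operations eigenvalue argument. First I would build a stabilization map $\sigma : G_0(R) \to K_0(mf(Q,f))$ from Buchweitz's equivalence $mf(Q,f) \simeq D^b(R)/\Perf(R)$ recalled in Section \ref{mf}, and use the standard identification (a form of \cite[Corollary 6.4.1]{buchweitz}, cf.\ the remark after Corollary \ref{cor819})
$$
\theta(M,N) \;=\; - \chi\bigl(\sigma(M^*),\,\sigma(N)\bigr)_{mf(Q,f)},
$$
where $M^*$ denotes the $R$-linear dual. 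Combined with Theorem \ref{BvS}, this reduces the theorem to showing that $\ch_{X_\infty}(\sigma(\a^*)) = 0$ whenever $\a \in G_{0,(i)}(Z)$ and $i \ne p := \frac{n+1}{2}$.

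The heart of the argument is an Adams-operations eigenvalue computation, in the spirit of \cite{BvS} and \cite{brown}. On the source side, $\psi_l$ acts on $G_{0,(i)}(Z)$ by $l^{-i}$, and by \eqref{summands} and \eqref{compatible} this corresponds, after the $l^{-d}$ twist, to the $l^{d-i}$-eigenspace of $\psi^l$ on $K_0^Z(Y) \otimes \Q$; this in turn should correspond to an Adams operation $\Psi^l$ on $K_0(mf(Q,f))$, constructed as in \cite{brown}, through which $\sigma$ is equivariant. On the target side, the image of $\ch_{X_\infty}$ lies in $\ker N \cap F^p H^n(X_\infty;\C)_1$ by Theorem \ref{thm915}(2), and by the explicit formula of Theorem \ref{thm915}(1), $\ch_{X_\infty}$ factors through the classical $K_1$-Chern character $ch_1^{2p}$, whose image sits in the $l^p$-eigenspace for Adams operations on $H^n(V';\Q)$. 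Comparing eigenvalues forces $i = p$ whenever $\ch_{X_\infty}(\sigma(\a^*)) \ne 0$, so every other $G_{0,(i)}(Z)$ must be annihilated, which is exactly the required claim. The ``in particular'' clause for even $n$ is then immediate since $p$ is not an integer.

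The main obstacle will be pinning down the matrix-factorization side: one must define an Adams operation $\Psi^l$ on $K_0(mf(Q,f))$ with the correct compatibility both with $\sigma$ (so that $G_{0,(i)}(Z)$ lands in a computable eigenspace) and with the $K_1$-Chern character appearing in Theorem \ref{thm915}(1) (so that the target eigenvalue $l^p$ can actually be pulled back). The explicit trace formula $\psi_0(\ch_{X_\infty}(A,B)) \propto \del_t^{p-1} s_0(\tr((dA\,dB)^p))$ from Theorem \ref{thm915}(1) should make this tractable by reducing everything to Adams operations acting on matrices of algebraic power series, but carefully tracking the $(-1)^{?}$, $(2\pi i)^{?}$, and $l^{-d}$ normalizations -- so as to rule out any eigenvalue coincidence other than $i = p$ -- is the genuinely delicate bookkeeping.
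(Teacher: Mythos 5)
Your guiding idea --- that only the Adams eigenvalue $l^{(n+1)/2}$ can survive because the reduced cohomology of the Milnor fiber is concentrated in degree $n$ --- is exactly the right one, but the route you propose for exploiting it has a genuine gap, and it is not the paper's route. The paper does not pass through Theorem \ref{BvS} or $\ch_{X_\infty}$ at all for this theorem: it uses the map $\gamma: K_0^Z(Y) \to KU(B_\epsilon, X_\infty)$ to \emph{relative topological $K$-theory} supplied by \cite[Section 4]{brown} and \cite[Propositions 4.1 and 4.2]{BvS}, which is already known to commute with Adams operations and to have the property that $\gamma(\beta)=0$ forces $\theta(r(\beta),-)=0$. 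Since $B_\epsilon$ is contractible and $X_\infty$ is a wedge of $n$-spheres with $n$ odd, $KU(B_\epsilon,X_\infty)\otimes\Q$ is a single $\psi^l$-eigenspace of eigenvalue $l^{(n+1)/2}$, and the theorem follows from \eqref{summands}. All of the compatibilities you flag as ``the main obstacle'' are precisely what those citations supply --- and they supply them on the topological side, where Adams operations are classical.

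The gap in your version is that the objects you need do not yet exist: you would have to construct an Adams operation $\Psi^l$ on $K_0(mf(Q,f))$ (equivalently on $K_1(Q^h[1/f])$ modulo the kernel of the surjection $\pi$ from the proof of Proposition \ref{prop915}), prove that the stabilization map $\sigma$ intertwines it with Soul\'e's $\psi_l$ on $G_0(Z)$ with the correct weight (which requires Adams-equivariance of the boundary map in the localization sequence, not merely of a Chern character), and prove that $\ch_{X_\infty}\circ\Psi^l = l^{p}\,\ch_{X_\infty}$. None of this is bookkeeping; collectively it is the technical heart of the theorem. Moreover the assertion that the image of $ch_1^{2p}$ ``sits in the $l^p$-eigenspace for Adams operations on $H^n(V';\Q)$'' conflates operations on $K$-theory with operations on cohomology: the correct statement is $ch_1^{2j}\circ\psi^l=l^j\,ch_1^{2j}$ on $K_1$, which is of no use until $\Psi^l$ and its compatibilities are in place. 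Separately, your disposal of the even-$n$ case is vacuous as written: Assumptions \ref{ass919}, and hence $\ch_{X_\infty}$, Theorem \ref{thm915}, and Theorem \ref{BvS}, are only defined for $n$ odd (for $n$ even one has $HP_0(mf(Q,f))=0$ and $p=\frac{n+1}{2}$ is not an integer, so $\tr((dA\,dB)^p)$ makes no sense), so you cannot ``run the argument and note that no $i$ equals $p$''; the paper simply quotes the independent proofs of Buchweitz--van Straten and Polishchuk--Vaintrob for that case.
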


\begin{proof}
As discussed in Section \ref{application}, the case where $n$ is even was proven independently by Buchweitz-van Straten and Polishchuk-Vaintrob (\cite{BvS}, \cite{PV}), so we may assume $n$ is odd. By \cite[Section 4]{brown} and \cite[Propositions 4.1 and 4.2]{BvS}, there is a map $\gamma : K_0^{Z}(Y) \to KU(B_\epsilon, X_\infty)$, where $B_\epsilon$ and $X_\infty$ are chosen as in Section \ref{milnorfiber}, such that
\begin{enumerate}
\item $\gamma$ commutes with $p^{\on{th}}$ Adams operations for any prime $p$, and 
\item if $\gamma(\b) = 0$, $\theta(r(\b), -) : K^Z_0(Y) \to \Z$ is the zero function.
\end{enumerate}
Observe that, since the (reduced) cohomology of $X_\infty$ is concentrated in degree $n$, the only nonzero eigenspace of $KU(B_\epsilon, X_\infty)$ for any Adams operation $\psi^l$ is the one corresponding to the eigenvalue $l^{\frac{n+1}{2}}$. The statement now follows from \eqref{summands}. 
\end{proof}

As a corollary, we prove \cite[Conjecture 3.1(4)]{DK} in the setting of Theorem \ref{adams}:
\begin{cor}
\label{theta}
If $R$ is as in the statement of Theorem \ref{adams}, and $M, M'$ are maximal Cohen-Macaulay $R$-modules,
then 
$$
\theta(M^*, M') = -(-1)^{\frac{\dim(Q)}{2}} \theta(M, M').
$$
\end{cor}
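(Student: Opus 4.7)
The plan is to deduce Corollary \ref{theta} from Theorem \ref{adams} by determining how the duality $(-)^*$ on MCM modules acts on the eigenspace decomposition $G_0(Z)\otimes\Q = \bigoplus_i G_{0,(i)}(Z)$. Set $d = \dim Q = n+1$. If $d$ is odd, then $\theta \equiv 0$ by the odd-dimensional case of Theorem \ref{hochster}, so both sides of the claimed identity vanish; assume therefore that $d$ is even. The identity to be proved is equivalent to
$$
\theta\bigl([M^*] - (-1)^{d/2+1}[M],\, M'\bigr) = 0,
$$
so by Theorem \ref{adams} it suffices to show that the class $[M^*] - (-1)^{d/2+1}[M]$ has vanishing $G_{0,(d/2)}$-component (note $d/2 = (n+1)/2$).

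First I would lift $[M]$ to $K_0^Z(Y)$. Because $M$ is MCM over the hypersurface $R=Q/(f)$, Auslander--Buchsbaum gives $\pd_Q(M)=1$, so $M$ has a two-term free resolution $0\to Q^r \xra{A} Q^r \to M\to 0$ and hence a class $\beta_M\in K_0^Z(Y)$ with $r(\beta_M)=[M]$. Applying $\Hom_Q(-,Q)$ to this resolution shows that $M^*$ is resolved by $0\to Q^r \xra{A^T} Q^r\to M^*\to 0$, while $\RHom_Y(\beta_M,\OO_Y)$ is represented by the same two-term complex placed in cohomological degrees $[0,1]$ rather than $[-1,0]$; this shift contributes an overall sign, so that
$$
\beta_{M^*} = -(\beta_M)^{\vee} \quad \text{in } K_0^Z(Y),
$$
where $(-)^{\vee}:=\RHom_Y(-,\OO_Y)$.

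The next step is to verify that on $K^Z_{0,(i)}(Y)\otimes \Q$ the involution $(-)^{\vee}$ acts by $(-1)^{d-i}$. On $K_0(Y)_\Q$ this is the standard identification of $(-)^{\vee}$ with $\psi^{-1}$, combined with Kurano--Roberts's formula $ch_i\circ\psi^l = l^{d-i}\,ch_i$; the same computation, applied to the localized Chern character $ch_Y^Z$ and using that Gillet--Soul\'e's Adams operations commute with derived $Q$-duality, extends the statement to $K_0^Z(Y)_\Q$. Combined with the previous step,
$$
ch_i(\beta_{M^*}) = (-1)^{d-i+1}\,ch_i(\beta_M),
$$
and the identification $r(K^Z_{0,(i)}) = G_{0,(i)}$ from \eqref{summands} then yields
$$
[M^*]_{(i)} = (-1)^{d-i+1}\,[M]_{(i)} \quad \text{in } G_{0,(i)}(Z)\otimes\Q.
$$
Specializing to $i=d/2$ gives $[M^*]_{(d/2)}=(-1)^{d/2+1}[M]_{(d/2)}$, so $[M^*]-(-1)^{d/2+1}[M]$ lies in $\bigoplus_{i\ne d/2}G_{0,(i)}(Z)\otimes\Q$, and Theorem \ref{adams} completes the proof.

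The main obstacle I anticipate is the sign bookkeeping in the third step: confirming that Gillet--Soul\'e's Adams operations on $K_0^Z(Y)$ interact with the derived duality $\RHom_Y(-,\OO_Y)$ exactly by the codimension sign $(-1)^{d-i}$, and then interfacing this correctly with the $[-1]$-shift that produces the sign $-1$ in $\beta_{M^*}=-(\beta_M)^{\vee}$. Modulo this compatibility, which I expect to be standard, every remaining step is formal.
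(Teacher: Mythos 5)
Your argument is correct and its architecture is the same as the paper's: reduce to showing that $[M^*]$ and $(-1)^{d/2+1}[M]$ have the same component in the middle eigenspace $G_{0,(d/2)}(Z)\otimes\Q$, then apply Theorem \ref{adams}. The difference is in how the duality sign is produced. The paper's proof is a one-line citation of \cite[Example 18.3.19]{fulton}, which gives $\tau_i([M^*]) = (-1)^{\dim Z - i}\,\tau_i([M])$ directly on $G_0(Z)$ (using that $R$ is Gorenstein, so $\Hom_R(M,\omega_R)\cong M^*$); since $\dim Z = d-1$, this is exactly your $(-1)^{d-i+1}$. You instead lift to $K_0^Z(Y)$ via the two-term $Q$-free resolution, extract the sign $-1$ from the shift $\RHom_Q(M,Q)\simeq M^*[-1]$, and then need $ch_i\circ(-)^{\vee} = (-1)^{d-i}\,ch_i$ on $K_0^Z(Y)\otimes\Q$. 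That fact is true and standard (it is the localized form of the statement that the codimension-$j$ piece of a Chern character changes by $(-1)^j$ under duality), but your justification of it is the one loose joint: the operations $\psi^l$ of \cite{GS} and the Kurano--Roberts formula are only set up for $l\ge 1$ here, so ``identify $(-)^{\vee}$ with $\psi^{-1}$ and apply Kurano--Roberts'' is not literally available and would need either a direct splitting-principle argument for $ch_Y^Z$ or a citation --- \cite[Example 18.3.19]{fulton} is precisely that citation, packaged at the level of $\tau$, and substituting it collapses your proof to the paper's. One small further point: for $d$ odd you should quote the vanishing of $\theta$ from Theorem \ref{adams} (or \cite{BvS}, \cite{PV}) rather than from Theorem \ref{hochster}, whose proof in the paper comes later and depends on this corollary.
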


\begin{proof}
Combine \cite[Example 18.3.19]{fulton} and Theorem \ref{adams}. 
\end{proof}

Let $\sigma$ denote the map $G_0(Z) \to K_0(mf(Q, f))$ defined in the same way as the map in (\ref{G0K0}).

\begin{cor}
\label{chitheta}
If $Q = \C[x_0, \dots, x_n][1/g]$ and $f$ are as in Theorem \ref{adams}, we have
$$
\chi(\sigma( - ), \sigma( - ))_{mf(Q, f)} = (-1)^{ \frac{\dim(Q)}{2}} \theta( - , -)  .
$$
\end{cor}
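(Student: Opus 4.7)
The plan is to reduce the identity to pairs of MCM modules and then combine Buchweitz's identification of $\theta$ with the Herbrand difference together with Corollary \ref{theta}.

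Both sides of the claimed identity are bilinear pairings on $G_0(Z)$ that vanish when one of the arguments has finite projective dimension: for $\theta$ this holds because $\Tor^R_i(N, -) = 0$ for $i > \pd_R(N)$, and for $\chi(\sigma(-), \sigma(-))$ because modules of finite projective dimension are perfect complexes on $R$ and so go to zero under $\sigma$. Given any finitely generated $R$-module $N$ and a free resolution $\cdots \to F_1 \to F_0 \to N \to 0$, the $d$-th syzygy $\Omega^d N$ is MCM for $d \gg 0$, and one has
$$
[N] = \sum_{i=0}^{d-1} (-1)^i [F_i] + (-1)^d [\Omega^d N] \in G_0(Z),
$$
so the class of $N$ agrees modulo free modules with $\pm [\Omega^d N]$. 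Hence it suffices to verify the identity when both arguments are MCM.

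For MCM modules $M$ and $M'$, Buchweitz's quasi-equivalence $mf(Q,f) \simeq D^b(R)/\Perf(R)$ recalled in Section \ref{mf} identifies the cohomology of the $\Z/2$-graded complex $\Hom_{mf(Q,f)}(\sigma(M), \sigma(M'))$ with the stable Ext groups $\uExt^*_R(M, M')$; since these are $2$-periodic, we obtain
$$
\chi_{mf(Q,f)}(\sigma(M), \sigma(M')) = \dim_k \uHom_R(M, M') - \dim_k \uExt^1_R(M, M') = h(M, M'),
$$
where $h$ denotes the Herbrand difference. By \cite[Corollary 6.4.1]{buchweitz}, with the correct sign as noted in the remark following Corollary \ref{cor819}, we have $\theta(M, M') = -h(M^*, M')$; replacing $M$ by $M^*$ and using $M^{**} \cong M$ for MCM modules over the Gorenstein ring $R$ gives $h(M, M') = -\theta(M^*, M')$. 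Combining this with Corollary \ref{theta} then yields
$$
\chi_{mf(Q,f)}(\sigma(M), \sigma(M')) = h(M, M') = -\theta(M^*, M') = (-1)^{\dim(Q)/2} \theta(M, M'),
$$
which completes the proof. The main obstacle is simply careful sign bookkeeping: ensuring that the minus sign from Buchweitz's formula and the one in Corollary \ref{theta} combine to exactly $(-1)^{\dim(Q)/2}$.
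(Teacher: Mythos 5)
Your proposal is correct and follows essentially the same route as the paper: reduce to MCM modules (the paper invokes the fact that $G_0(R)$ is generated by classes of MCM modules, which is exactly your syzygy argument), identify $\chi(\sigma(M),\sigma(M'))_{mf(Q,f)}$ with the Herbrand difference $h(M,M')$ via Buchweitz's equivalence, and then combine $\theta(M,M') = -h(M^*,M')$ with Corollary \ref{theta}. The sign bookkeeping in your final chain of equalities matches the paper's.
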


\begin{proof}
  Let $M$ and $M'$ be maximal Cohen-Macaulay $R$-modules. The \emph{Herbrand difference} of $M$ and $N$ is the integer
$$
\begin{aligned}
  h(M, M') & := \len_R \Ext^2_R(M, M') -  \len_R \Ext^1_R(M, M') \\
           &  = \dm_\C \Ext^2_R(M, M') -  \dm_\C \Ext^1_R(M, M').
\end{aligned}
$$

It follows from \cite[Corollary 6.4.1]{buchweitz} that
$$
\theta(M, M') = -h(M^*, M'),
$$ 
where $M^*$ denotes the $R$-linear dual of $M$. (Beware that in both \cite{MPSW} and \cite[Remark 1.2]{BvS} the sign is omitted in this formula.)
Moreover, 
$$
\chi(\sigma( M), \sigma(M'))_{mf(Q,f)} = h(M, M'). 
$$
The result therefore holds for $M, M'$ maximal Cohen-Macaulay, 
by Corollary \ref{theta}. Since $G_0(R)$ is generated as an abelian group by classes of maximal Cohen-Macaulay $R$-modules, the proof is complete.
\end{proof}

\begin{proof}[Proof of Theorem \ref{hochster}] 
Using arguments similar to those in Section
\ref{reduction},  we may reduce to the case when $Q$ and $f$ are as in Theorem \ref{adams}. 
Then the result follows from  Corollaries \ref{cor819} and \ref{chitheta}.
\end{proof}

\begin{rem}
  We observe that each of Conjectures (1) -- (5) in \cite[Conjecture 3.1]{DK}
  is now proven over the complex numbers: to summarize, (1) is proven independently by Buchweitz-van Straten \cite{BvS} and Polishchuk-Vaintrob \cite{PV}; (2) and (3) are
  established in \cite{BMTW} and \cite{brown}, respectively; and (4) and (5) follow from Corollary \ref{theta} and Theorem \ref{hochster}, respectively.
  (As noted in the introduction, (5) was originally posed in \cite{MPSW}.)
\end{rem}

\appendix
\section{Polarized mixed Hodge structures} \label{appendix}
Our reference here is \cite[Section 2]{Hertling}. Let $V$ be a finite dimensional vector space over a field $k$, and let $m$ be a non-negative integer. For any $k$-linear endomorphism $N$ of $V$ such that $N^{m+1} = 0$, there is a unique 
increasing filtration $W_\bu$ of $V$ of the form
$$
0 = W_{-1} \subseteq W_0 \subseteq \cdots \subseteq W_{2m} = V
$$
such that $N(W_l) \subseteq W_{l-2}$ and the induced map $N^l:  \Gr_{m+l}^W \to  \Gr_{m-l}^W$ is an isomorphism for all $l \geq 0$ (here, $\Gr_j^W := W_j/W_{j-1}$).  
$W_\bu$ is called the \emph{weight filtration} of $V$ associated to $(N, m)$. Note that the filtration depends not just on $N$, but also on
the specified integer $m$.

The weight filtration is natural, in the sense that if $g: V \to V'$ is a $k$-linear transformation, $N, N'$ are endomorphisms of $V, V'$ whose $(m+1)^{\on{st}}$ power is 0, and $N' \circ g = g \circ N$, then $g(W_j(V)) \subseteq W_j(V')$ for all $j$.

\begin{defn} For a non-negative integer $m$, a {\em mixed Hodge structure of level $m$} (MHS of level $m$, for short) consists of the following data:
\begin{itemize}
\item a finitely generated abelian group $H$ with associated vector spaces $H_\Q = H \otimes_\Z \Q$ and $H_\C = H \otimes_\Z \C$, 
\item a $\Q$-linear endomorphism $N: H_\Q \to H_\Q$ such that $N^{m+1} = 0$, and  
\item a decreasing filtration $F^\bu$ of the complex vector space  $H_\C$.
\end{itemize}
These data are required to satisfy two conditions.
Let $W_\bu$ be the weight filtration of $H_\Q$ associated to $(N, m)$ as defined above, and, for each $j$,  let $F^\bu \Gr_j^W$ be the decreasing
filtration of $\Gr_j^W \otimes_\Q \C$ induced by $F^\bu$. Then

\begin{itemize}
\item For all $j$, 
$\Gr_j^W \otimes_\Q \C$ may be written as an internal direct sum $F^p \Gr_j^W \oplus \overline{F^q \Gr_j^W}$ for all integers $p,q$ such that $p + q = j+1$, and 
\item $N(F^p) \subseteq F^{p-1}$ for all $p$.
\end{itemize}
Here, the $\overline{( - )}$ notation indicates complex conjugation: given an element of $F^q \Gr_j^W$ of the form $v \otimes z$, its conjugate is $v \otimes \overline{z}$. By abuse of notation, we usually write an MHS of level $m$, which consists of the triple of data $(H, N, F^\bu)$, by just $H$. 

A morphism of MHS's of level $m$, from $H$ to $H'$, is a homomorphism of the underlying  abelian groups $g: H \to H'$  such that $N_{H'} \circ g = g \circ N_H$ and 
$g(F^p H_\C) \subseteq F^p H'_\C$ for all $p$.  
\end{defn}

\begin{ex}
For a non-negative even integer $m$ and a finitely generated abelian group $A$, the {\em trivial MHS of level $m$} on $A$ is defined by taking 
$N = 0$ and 
$$
F^p = 
\begin{cases}
A_\C, & \text{if $p \leq m/2$, and} \\
0, & \text{if $p > m/2$.} \\
\end{cases}
$$
Note that 
$$
W_j = 
\begin{cases}
0, & \text{if $j \leq m-1$, and} \\
A_\Q, & \text{if $j \geq m$} \\
\end{cases}
$$
so that $\Gr_m^W A_\Q = A_\Q$ and $\Gr_j^W A_\Q = 0$ for all $j \ne m$. The induced Hodge filtration on $\Gr_m^W(A_\C) = A_\C$  is given by
$$
F^p \Gr_m^W(A_\C) = 
\begin{cases}
A_\C, & \text{if $p \leq m/2$, and} \\
0, & \text{if $p > m/2$.} \\
\end{cases}
$$
Since $p+q = m + 1$ implies that either $p > m/2$ and  $q \leq m/2$ or vice versa, the only non-trivial axiom is satisfied. 

If $H$ is an MHS of level $m$, then a morphism $g$ of MHS's of level $m$ from $A$ to $H$ is the 
same thing as a homomorphism of ordinary abelian groups from $A$ to the subgroup
$$
\ker(N: H_\Q \to H_\Q) \cap F^{m/2} H_\C  \cap H
$$
of $H$. It follows from the axioms that $\ker(N) \subseteq W_m H_\Q$, which means $g$ induces a map
$$
\Gr^W_m(g): A \to \Gr_m^W(H).
$$
More generally, if $A$ is any (not necessarily finitely generated) abelian group, and $g: A \to H_\Q$ is a group homomorphism whose image is contained in $\ker(N) \cap F^{m/2} H_\C$, then $g$ induces a map $A \to  \Gr_m^W(H)$, which we also denote by $\Gr^W_m(g)$. 
\end{ex}

\begin{lem} \label{lem815}
Suppose $m$ is an even integer, and $H = (H, N, F^\bullet)$ is an MHS of level $m$. If $A$ is a (not necessarily finitely generated)
abelian group, and $g: A \to H_\Q$ is a homomorphism of groups whose image is contained in $\ker(N) \cap F^{m/2} H_\C$, then
$\ker(g) = \ker(\Gr_m^W(g))$.
\end{lem}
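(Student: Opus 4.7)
The inclusion $\ker(g) \subseteq \ker(\Gr_m^W(g))$ is automatic: the map $\Gr_m^W(g)$ is by construction the composition of $g$ with the quotient $W_m H_\Q \onto \Gr_m^W(H)$, since the hypothesis $\im(g) \subseteq \ker(N)$ together with the inclusion $\ker(N) \subseteq W_m H_\Q$ (noted in the preceding example) places $\im(g)$ in $W_m H_\Q$. So the real content is the reverse inclusion $\ker(\Gr_m^W(g)) \subseteq \ker(g)$.

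The plan is to argue by contradiction via a ``lowest weight'' trick. Suppose $a \in A$ satisfies $\Gr_m^W(g)(a) = 0$ but $g(a) \neq 0$. Set $v := g(a) \in H_\Q$. Then $v \in W_{m-1} H_\Q$, and by the standing hypothesis on $g$ we also have $v \in F^{m/2} H_\C$. Let $j \leq m-1$ be the smallest integer with $v \in W_j H_\Q$, and consider the nonzero class $\bar v \in G_j := \Gr_j^W(H_\C)$. Because $v$ lies in the rational subspace $H_\Q$, it is fixed by the complex conjugation $c$ on $H_\C = H \otimes_\Z \C$; since $c$ preserves $W_\bullet$, it induces conjugation on each $G_j$ and fixes $\bar v$. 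Moreover $\bar v$ inherits membership in $F^{m/2} G_j$ from $v \in F^{m/2} H_\C$.

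Now I would invoke the Hodge decomposition axiom at weight $j$: for any $p, q$ with $p + q = j+1$, we have $G_j = F^p G_j \oplus \overline{F^q G_j}$. Apply this with $p = m/2$ and $q = j+1-m/2$. Since $j \leq m-1$, the inequality $q \leq m/2$ gives the inclusion $\overline{F^{m/2} G_j} \subseteq \overline{F^q G_j}$. But conjugation-invariance of $\bar v$ combined with $\bar v \in F^{m/2} G_j$ forces $\bar v \in \overline{F^{m/2} G_j}$, hence $\bar v \in F^{m/2} G_j \cap \overline{F^q G_j} = 0$, contradicting the minimality of $j$. Therefore $g(a) = 0$, as required.

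The whole proof is quite short; the only real step is bookkeeping the indices in the Hodge decomposition correctly so that the direct-sum axiom can be applied with $p = m/2$ and an appropriate $q$. The inequality $j+1-m/2 \leq m/2$, which is exactly the condition $j \leq m-1$ coming from the assumption $\bar v \notin W_{m-1}$ failing, is the fulcrum of the argument, so that is the point to double-check; everything else (conjugation invariance of rational classes mod $W_{j-1}$, compatibility of $c$ with the weight filtration) is formal from the definitions.
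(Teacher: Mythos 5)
Your proof is correct and follows the same route as the paper: reduce to showing $W_{m-1} H_\Q \cap F^{m/2} H_\C = 0$, which the paper simply asserts follows from the MHS axioms. Your lowest-weight-graded-piece argument (using conjugation-invariance of rational classes and the direct sum $F^{m/2}\Gr_j^W \oplus \overline{F^{j+1-m/2}\Gr_j^W}$ with $j+1-m/2 \le m/2$) is a correct verification of that assertion.
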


\begin{proof}
If $\Gr_m^W(g)(a) = 0$,  then $g(a) \in W_{m-1} H_\Q \cap F^{m/2} H_\C$. But the axioms of an MHS of level $m$ imply that 
$W_{m-1} H_\Q \cap F^{m/2} H_\C = 0$.
\end{proof}

Given an MHS $(H, N, F^\bu)$, the {\em primitive subspace} of $\Gr_{m+l}^W$ is defined to be
$$
P \Gr_{m+l}^W := \ker(N^{l+1}: \Gr_{m+l}^W \to \Gr_{m-l-2}^W).
$$

\begin{defn} For a non-negative integer $m$, a {\em polarized mixed Hodge structure} (PMHS, for short) {\em of level m} consists of the data of an MHS $H = (H,
  N, F^\bu)$  of level $m$ along with a non-degenerate, $(-1)^m$-symmetric $\Q$-bilinear form 
$$
S: H_\Q \otimes_\Q H_\Q \to \Q.
$$
The data $(H, N, F^\bu, S)$ are required to satisfy the following additional conditions: 
\begin{itemize}
\item $S(Na,b) + S(a,Nb) = 0$ for all $a,b \in H_\Q$, and
\item $S(F^p, F^q) = 0$ for all pairs of integers $p$ and $q$ satisfying $p+q = m+1$.
\end{itemize}
Using the first of these two properties, we may define, for each $l \geq 0$,  an induced pairing $S_l$ on the primitive subspace $P \Gr_{m+l}^W$ by 
$$
S_l(a, b) = S(\tilde{a}, N^l \tilde{b}), \text{ where $\tilde{a}, \tilde{b} \in W_{m+l}$ are representatives of $a,b$}.
$$
We extend $S_l$ to a  sesquilinear complex pairing on $P \Gr_{m+l}^W \otimes_\Q \C$ by
$$
S_l(a \otimes z, b \otimes w) = S_l(a,b) z \overline{w}.
$$
We also require that
\begin{itemize}
\item for each $l \geq 0$, we have $S_l(F^p P \Gr_{m+l}, F^{q} P \Gr_{m+l}) = 0$ for all $p$ and $q$ satisyfing $p+q = m+l +1$, and
\item for each $l \geq 0$ and all $p$,  we have
$\sqrt{-1}^{2p-m-l} S_l(a, \overline{a}) > 0$ whenever \newline $a \in F^p P \Gr_{m+l}^W \cap \overline{F^{m+l-p} P \Gr_{m+l}^W}$  and $a \ne 0$.
\end{itemize}
A morphism of PMHS's of level $m$, 
from $H = (H, N, F^\bu, S)$ to $H' = (H', N', F^\bu, S')$,  is a morphism $g$ of MHS's of level $m$ such that $S(a,b) = S'(g(a), g(b))$ for 
all $a,b \in H_\Q$. 
\end{defn}

\begin{ex}
Suppose $m$ is an even integer and $A$ is a finitely generated abelian group equipped with  the trivial MHS of level $m$. An extension of this data to a PMHS of level $m$ on $A$ consists of a
positive definite symmetric bilinear form $S$ on $A_\Q$. Indeed, the first three axioms hold trivially. The last one only has content when $l = 0$ and $p =
\frac{m}{2}$, in which case (since $\Gr_m A_\Q = A_\Q$) it  asserts that $S(a, \overline{a}) > 0$ for all $a \in A_\C$, where $S$ is extended 
sesquilinearly to $A_\C$ as before.
\end{ex}

\begin{lem} \label{lem94}
Let $m$ be an even integer, and suppose $H$ is a PMHS of level $m$. If $A$ is an abelian group, and $g: A \to H_\Q$ is a homomorphism of groups with image
contained in $\ker(N) \cap F^{\frac{m}{2}} H_\C$, then the induced pairing $S_A$ on $A_\Q$ defined by
$$
S_A(a,b) =  S(g(a), g(b))
$$
is a positive semi-definite symmetric bilinear pairing with kernel equal to $\ker(g_\Q)$. 
That is, $S_A(a,-) \equiv 0$ if and only if $a \in \ker(g_\Q)$, and the induced
pairing on $A_\Q/\ker(g_\Q)$ is positive definite.
\end{lem}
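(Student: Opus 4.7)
The plan is to reduce the positive semi-definiteness on all of $A_\Q$ to the Hodge-Riemann positivity axiom on the primitive part $P\Gr_m^W$, by factoring $g$ through $\Gr_m^W$. The hypothesis $g(A)\subseteq\ker(N)$ is crucial because, as observed in the excerpt following the first example, $\ker(N)\subseteq W_m H_\Q$. Moreover $N$ sends $W_m$ into $W_{m-2}$, so the induced map $\Gr_m^W(g)\colon A\to\Gr_m^W H_\Q$ in fact lands in the primitive subspace
$$
P\Gr_m^W = \ker\!\bigl(N:\Gr_m^W\to \Gr_{m-2}^W\bigr).
$$
By the very definition of $S_0$ on $P\Gr_m^W$ (the $l=0$ case, in which $N^0=\id$), we have the identity
$$
S_A(a,b)\;=\;S(g(a),g(b))\;=\;S_0\bigl(\Gr_m^W(g)(a),\,\Gr_m^W(g)(b)\bigr).
$$

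Next I would check that $\Gr_m^W(g)(A)\otimes\C$ lies in $F^{m/2}P\Gr_m^W\cap\overline{F^{m/2}P\Gr_m^W}$. The inclusion into $F^{m/2}P\Gr_m^W$ follows from the assumption $g(A)\subseteq F^{m/2}H_\C$ together with the compatibility of the Hodge filtration with the passage to $\Gr_m^W$. The inclusion into $\overline{F^{m/2}P\Gr_m^W}$ follows because $g(A)\subseteq H_\Q$, so every element $g(a)$ is fixed by the complex conjugation on $H_\C$ and hence lies in $F^{m/2}H_\C\cap\overline{F^{m/2}H_\C}$.

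With that in hand, I invoke the polarization axiom of a PMHS in its strongest form (the final bulleted item of the definition), taking $l=0$ and $p=m/2$: for every nonzero $v\in F^{m/2}P\Gr_m^W\cap\overline{F^{m/2}P\Gr_m^W}$,
$$
\sqrt{-1}^{\,2p-m-l}\,S_0(v,\bar v)\;=\;S_0(v,\bar v)\;>\;0.
$$
Specializing to $v=\Gr_m^W(g)(a)\otimes 1$ with $a\in A_\Q$, the element $v$ is real so $\bar v=v$, and we conclude $S_A(a,a)=S_0(v,v)\ge 0$, with equality precisely when $\Gr_m^W(g)(a)=0$.

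Finally, Lemma~\ref{lem815} identifies $\ker(\Gr_m^W(g))$ with $\ker(g_\Q)$, so the set $\{a:S_A(a,a)=0\}$ is exactly $\ker(g_\Q)$. Since $S_A$ is positive semi-definite, the Cauchy–Schwarz inequality $S_A(a,b)^2\le S_A(a,a)\,S_A(b,b)$ forces $\{a:S_A(a,a)=0\}$ to coincide with the radical $\{a:S_A(a,-)\equiv 0\}$, giving the desired description of the kernel and yielding a positive definite form on $A_\Q/\ker(g_\Q)$. The only nontrivial step is the verification that $\Gr_m^W(g)(A)\otimes\C$ lies in the real part of $F^{m/2}P\Gr_m^W$; this is where both hypotheses on $g$ (the containment in $\ker N$ and in $F^{m/2}H_\C$) are used, together with the standard fact that $\ker(N)\subseteq W_m$ in a MHS of level $m$.
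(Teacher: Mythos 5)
Your proof is correct and follows essentially the same route as the paper's: pass to $\Gr_m^W$ using $\ker(N)\subseteq W_m$, observe the image is a real class in $F^{m/2}\cap\overline{F^{m/2}}$ of the (primitive part of the) middle graded piece, invoke the polarization axiom with $l=0$, $p=m/2$, and identify the kernel via Lemma~\ref{lem815}. Your two extra touches --- explicitly checking that the image lands in $P\Gr_m^W$ so that $S_0$ applies, and using Cauchy--Schwarz to pass from null vectors to the radical --- only make explicit steps the paper leaves implicit.
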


\begin{proof} It suffices to consider the case $A = \ker(N_{H_\Q}: H_\Q \to H_\Q) \cap F^{\frac{m}{2}} H_\C$ and $g$ is the identity map. 
That is, we just need to prove that the restriction of $S$ to this subspace $A$ is  positive
  definite (it is symmetric since $m$ is even). Given $\a \in A$, since $\ker(N_\Q) \subseteq W_m H_\Q$ and $\a \in F^{\frac{m}{2}}$,
we have an induced class 
$$
a := \Gr^W_m(\a) \in
  F^{\frac{m}{2}} \Gr^W_m.
$$ 
Since $\a \in H_\Q$, we have $\overline{a} =a$ and so 
$$
a \in F^{\frac{m}{2}} \Gr^W_m \cap \overline{F^{\frac{m}{2}} \Gr^W_m}.
$$
The axioms for $S$ (with $p = \frac{m}{2}$ and $l = 0$) give
$$
S_0(a,a) \geq 0, \text{ and if $S_0(a,a) = 0$, then $a = 0$,}
$$
where $S_0(a,a) = S(\a, \a)$. 
This proves that the restriction of $S$ to $A$ is positive semi-definite with kernel equal to the kernel of the canonical map $A \to
\Gr_m^W(H_\Q)$. But this map is injective by Lemma \ref{lem815}.
\end{proof}

\bibliographystyle{amsalpha}
\bibliography{Bibliography}

\providecommand{\bysame}{\leavevmode\hbox to3em{\hrulefill}\thinspace}
\providecommand{\MR}{\relax\ifhmode\unskip\space\fi MR }
\providecommand{\MRhref}[2]{%
  \href{http://www.ams.org/mathscinet-getitem?mr=#1}{#2}
}
\providecommand{\href}[2]{#2}
\begin{thebibliography}{BMTW17}

\bibitem[AVGZ88]{AGZV}
V.I. Arnolʹd, A.N. Varchenko, and S.M. Gusein-Zade, \emph{Singularities of
  differentiable maps volume {II}: monodromy and asymptotic integrals},
  vol.~83, Birkh\"auser, 1988.

\bibitem[BK90]{BK}
Aleksei Bondal and Mikhail Kapranov, \emph{Enhanced triangulated categories},
  Matematicheskii Sbornik \textbf{181} (1990), no.~5, 669--683.

\bibitem[BMTW17]{BMTW}
Michael~K. Brown, Claudia Miller, Peder Thompson, and Mark~E. Walker,
  \emph{Adams operations on matrix factorizations}, Algebra \& Number Theory
  \textbf{11} (2017), no.~9, 2165--2192.

\bibitem[Bro17]{brown}
Michael~K. Brown, \emph{On a conjecture of {D}ao--{K}urano}, Journal of Algebra
  \textbf{490} (2017), 462--473.

\bibitem[BS01]{BS}
Paul Balmer and Marco Schlichting, \emph{Idempotent completion of triangulated
  categories}, J. Algebra \textbf{236} (2001), no.~2, 819--834.

\bibitem[Buc86]{buchweitz}
Ragnar-Olaf Buchweitz, \emph{Maximal {C}ohen-{M}acaulay modules and
  {T}ate-cohomology over {G}orenstein rings}, preprint (1986).

\bibitem[BVS12]{BvS}
Ragnar-Olaf Buchweitz and Duco Van~Straten, \emph{An index theorem for modules
  on a hypersurface singularity}, Moscow Mathematical Journal \textbf{12}
  (2012), no.~2, 237--259.

\bibitem[BW19a]{BW}
Michael~K. Brown and Mark~E. Walker, \emph{A {C}hern-{W}eil formula for the
  {C}hern character of a perfect curved module}, to appear in the Journal of
  Noncommutative Geometry (2019).

\bibitem[BW19b]{BW2}
\bysame, \emph{A proof of a conjecture of {S}hklyarov}, arXiv preprint
  arXiv:1909.04088 (2019).

\bibitem[Del71]{deligneII}
Pierre Deligne, \emph{Th{\'e}orie de {H}odge: {II}}, Publications
  Math{\'e}matiques de l'IH{\'E}S \textbf{40} (1971), 5--57.

\bibitem[Del74]{deligneIII}
\bysame, \emph{Th{\'e}orie de {H}odge: {III}}, Publications Math{\'e}matiques
  de l'IH{\'E}S \textbf{44} (1974), 5--77.

\bibitem[Dim12]{dimca}
Alexandru Dimca, \emph{Singularities and topology of hypersurfaces}, Springer
  Science \& Business Media, 2012.

\bibitem[DK14]{DK}
Hailong Dao and Kazuhiko Kurano, \emph{Hochster's theta pairing and numerical
  equivalence}, Journal of K-Theory \textbf{14} (2014), no.~3, 495--525.

\bibitem[Dyc11]{dyckerhoff}
Tobias Dyckerhoff, \emph{Compact generators in categories of matrix
  factorizations}, Duke Mathematical Journal \textbf{159} (2011), no.~2,
  223--274.

\bibitem[Efi17]{efimov}
Alexander~I. Efimov, \emph{Cyclic homology of categories of matrix
  factorizations}, International Mathematics Research Notices \textbf{2018}
  (2017), no.~12, 3834--3869.

\bibitem[Ful98]{fulton}
William Fulton, \emph{Intersection theory}, Springer-Verlag, 1998.

\bibitem[Gre17]{greuel}
Gert-Martin Greuel, \emph{Singularities in positive characteristic}, arXiv
  preprint arXiv: 1711.03453 (2017).

\bibitem[GS87]{GS}
Henri Gillet and Christophe Soul{\'e}, \emph{Intersection theory using {A}dams
  operations}, Inventiones mathematicae \textbf{90} (1987), no.~2, 243--277.

\bibitem[Hau12]{haution}
Olivier Haution, \emph{Integrality of the {C}hern character in small
  codimension}, Advances in Mathematics \textbf{231} (2012), no.~2, 855--878.

\bibitem[Her99]{Hertling}
Claus Hertling, \emph{Classifying spaces for polarized mixed {H}odge structures
  and for {B}rieskorn lattices}, Compositio Mathematica \textbf{116} (1999),
  no.~1, 1--37.

\bibitem[Her02]{hertlingbook}
\bysame, \emph{Frobenius manifolds and moduli spaces for singularities}, vol.
  151, Cambridge University Press, 2002.

\bibitem[KR00]{KR}
Kazuhiko Kurano and Paul Roberts, \emph{Adams operations, localized {C}hern
  characters, and the positivity of {D}utta multiplicity in characteristic 0},
  Transactions of the American Mathematical Society \textbf{352} (2000), no.~7,
  3103--3116.

\bibitem[Kul98]{Kulikov}
Valentine~S. Kulikov, \emph{Mixed {H}odge structures and singularities}, vol.
  132, Cambridge University Press, 1998.

\bibitem[Lie68]{lieberman}
David~I. Lieberman, \emph{Numerical and homological equivalence of algebraic
  cycles on {H}odge manifolds}, American Journal of Mathematics \textbf{90}
  (1968), no.~2, 366--374.

\bibitem[LO10]{LO}
Valery Lunts and Dmitri Orlov, \emph{Uniqueness of enhancement for triangulated
  categories}, Journal of the American Mathematical Society \textbf{23} (2010),
  no.~3, 853--908.

\bibitem[LP13]{LP}
Kevin~H. Lin and Daniel Pomerleano, \emph{Global matrix factorizations},
  Mathematical Research Letters \textbf{20} (2013), no.~1, 91--106.

\bibitem[LW12]{LW}
Graham~J Leuschke and Roger Wiegand, \emph{Cohen-macaulay representations}, no.
  181, American Mathematical Soc., 2012.

\bibitem[Mil68]{milnor}
John Milnor, \emph{Singular points of complex hypersurfaces}, vol.~61,
  Princeton University Press, 1968.

\bibitem[MPSW11]{MPSW}
W.~Frank Moore, Greg Piepmeyer, Sandra Spiroff, and Mark~E. Walker,
  \emph{Hochster's theta invariant and the {H}odge-{R}iemann bilinear
  relations}, Advances in Mathematics \textbf{226} (2011), no.~2, 1692--1714.

\bibitem[MT12]{MTK}
Matilde Marcolli and Gon{\c{c}}alo Tabuada, \emph{Kontsevich'€™s noncommutative
  numerical motives}, Compositio Mathematica \textbf{148} (2012), no.~6,
  1811--1820.

\bibitem[MT16]{MT}
\bysame, \emph{Noncommutative numerical motives, {T}annakian structures, and
  motivic {G}alois groups}, Journal of the European Mathematical Society
  \textbf{18} (2016), no.~3, 623--655.

\bibitem[Orl03]{orlov}
Dmitri Orlov, \emph{Triangulated categories of singularities and {D}-branes in
  {L}andau-{G}inzburg models}, arXiv preprint math/0302304 (2003).

\bibitem[Orl09]{orlovCY}
Dmitri~O. Orlov, \emph{Derived categories of coherent sheaves and triangulated
  categories of singularities}, Algebra, arithmetic, and geometry, Springer,
  2009, pp.~503--531.

\bibitem[Pek93]{Pekonen}
Osmo Pekonen, \emph{A {K}-theoretic approach to {C}hern-{C}heeger-{S}imons
  invariants}, Proceedings of the Winter School ``Geometry and Physics",
  Circolo Matematico di Palermo, 1993, pp.~27--34.

\bibitem[Pha83]{pham}
F.~Pham, \emph{Structure de {H}odge mixte associ\`ee \'a point critique
  isol\`e}, Ast\'eerisque \textbf{101102} (1983), 268.

\bibitem[PP12]{PP}
Alexander Polishchuk and Leonid Positselski, \emph{Hochschild (co) homology of
  the second kind {I}}, Transactions of the American Mathematical Society
  \textbf{364} (2012), no.~10, 5311--5368.

\bibitem[Pre11]{preygel}
Anatoly Preygel, \emph{{T}hom-{S}ebastiani \& duality for matrix
  factorizations}, arXiv preprint arXiv:1101.5834 (2011).

\bibitem[PV12]{PV}
Alexander Polishchuk and Arkady Vaintrob, \emph{{C}hern characters and
  {H}irzebruch--{R}iemann--{R}och formula for matrix factorizations}, Duke
  Mathematical Journal \textbf{161} (2012), no.~10, 1863--1926.

\bibitem[Ron15]{rond}
Guillaume Rond, \emph{Artin approximation}, arXiv preprint arXiv:1506.04717
  (2015).

\bibitem[Sai89]{saito}
Morihiko Saito, \emph{On the structure of {B}rieskorn lattice}, Ann. Inst.
  Fourier (Grenoble) \textbf{39} (1989), no.~1, 27--72.

\bibitem[Sch04]{schulze}
Mathias Schulze, \emph{A normal form algorithm for the {B}rieskorn lattice},
  Journal of Symbolic Computation \textbf{38} (2004), no.~4, 1207--1225.

\bibitem[Shk13]{shklyHRR}
Dmytro Shklyarov, \emph{{H}irzebruch--{R}iemann--{R}och-type formula for {DG}
  algebras}, Proceedings of the London Mathematical Society \textbf{106}
  (2013), no.~1, 1--32.

\bibitem[Shk14]{shklyarovhodge}
\bysame, \emph{Non-commutative {H}odge structures: {T}owards matching
  categorical and geometric examples}, Transactions of the American
  Mathematical Society \textbf{366} (2014), no.~6, 2923--2974.

\bibitem[Shk16]{shklyresidue}
\bysame, \emph{Matrix factorizations and higher residue pairings}, Advances in
  Mathematics \textbf{292} (2016), 181--209.

\bibitem[Sou85]{soule}
Christophe Soul{\'e}, \emph{Op{\'e}rations en {K}-th{\'e}orie alg{\'e}brique},
  Canadian Journal of Mathematics \textbf{37} (1985), no.~3, 488--550.

\bibitem[SS85]{SS}
J.~Scherk and J.H.M. Steenbrink, \emph{On the mixed {H}odge structure on the
  cohomology of the {M}ilnor fibre}, Mathematische Annalen \textbf{271} (1985),
  no.~4, 641--665.

\bibitem[Ste77]{steenbrink}
J.H.M. Steenbrink, \emph{Mixed {H}odge structure on the vanishing cohomology},
  Real and complex singularities ({P}roc. 9th {N}ordic summer school {NAVF}
  {S}ympos. {M}ath., {O}slo 1976), 1977, pp.~565--678.

\bibitem[Tab17]{tabuada2}
Gon{\c{c}}alo Tabuada, \emph{On {G}rothendieck's standard conjectures of type
  {C} and {D} in positive characteristic}, arXiv preprint arXiv:1710.04644
  (2017).

\bibitem[Tab18]{tabuada}
\bysame, \emph{A note on {G}rothendieck's standard conjectures of type ${C}^+$
  and ${D}$}, Proceedings of the American Mathematical Society \textbf{146}
  (2018), no.~4, 1389--1399.

\bibitem[To{\"e}11]{toen}
Bertrand To{\"e}n, \emph{Lectures on dg-categories}, Topics in algebraic and
  topological {$K$}-theory, Springer, 2011, pp.~243--302.

\bibitem[Var86]{varchenko}
A.N. Varchenko, \emph{On the local residue and the intersection form on the
  vanishing cohomology}, Izvestiya: Mathematics \textbf{26} (1986), no.~1,
  31--52.

\bibitem[Wei95]{weibel}
Charles Weibel, \emph{An introduction to homological algebra}, no.~38,
  Cambridge university press, 1995.

\bibitem[Wei13]{weibelK}
\bysame, \emph{The {K}-book: An introduction to algebraic {K}-theory}, vol.
  145, American Mathematical Society Providence, RI, 2013.

\end{thebibliography}
\end{document}